\documentclass[12pt, letterpaper, twoside]{article}
\usepackage[utf8]{inputenc}

\date{September 8, 2016}
\usepackage{graphicx}
\usepackage{amssymb}
\usepackage{amsthm}
\usepackage{mathtools}
\newtheorem{definition}{Definition}[section]
\newtheorem{theorem}[definition]{Theorem}
\newtheorem{lemma}[definition]{Lemma}
\newtheorem{remark}[definition]{Remark}
\newtheorem{example}[definition]{Example}
\newtheorem{corollary}[definition]{Corollary}

\newtheorem{proposition}[definition]{Proposition}
%
%
%
%
\begin{document}

\title{The Probability of Primeness for Specially Structured Polynomial Matrices over Finite Fields with Applications to Linear Systems and Convolutional Codes\footnote{The final publication is available at http://link.springer.com/article/10.1007/s00498-017-0191-z}
}


\author{Julia Lieb          
}


\maketitle

\begin{abstract}
We calculate the probability that random polynomial matrices over a finite field with certain structures are right prime or left prime, respectively. 
In particular, we give an asymptotic formula for the probability that finitely many nonsingular polynomial matrices are mutually left coprime.
These results are used to estimate the number of reachable and observable linear systems as well as the number of non-catastrophic convolutional codes. Moreover, we are able to achieve an asymptotic formula for the probability that a parallel connected linear system is reachable.
\end{abstract}

\section{Introduction}
\label{intro}

Since the research work of Rosenbrock (see \cite{rb}) polynomial matrices over finite fields have played an important role when investigating discrete time linear systems.
In \cite{fu2}, Fuhrmann introduced the so-called polynomial model; a generalization of it, the so-called fractional model was developed in \cite{lo}.
As the transfer function of a linear system is a matrix of proper rational functions, it admits a coprime polynomial matrix fraction representation. This factorization was used by Fuhrmann and Helmke \cite{Fu-He15} to study networks of linear systems, especially their reachability and observability. In this connection, they proved criteria for these properties, which consist of coprimeness conditions on the polynomial matrices describing the node systems. In particular, they showed that a parallel connection of reachable systems is reachable if and only if the denominator matrices in the representation for the transfer functions of the node systems are mutually left coprime \cite{fuhr}, \cite{Fu-He15}.\\
Additionally, polynomial matrices are essential in the theory of convolutional codes. One could define a convolutional code as the image of a polynomial matrix, which right primeness is equivalent to the important property of the code to be non-catastrophic. On the other hand, it is possible to construct a convolutional code out of a linear system \cite{ros}, \cite{RY1999}. Rosenthal and York showed that such a code is represented by the corresponding linear system in a minimal way if and only if the system is reachable and that in this case, the code is non-catastrophic if and only if the system is observable, too \cite{RY1999}.\\
In \cite{hjl} and \cite{hjL}, a formula for the number of reachable linear systems over a finite field was proven via computing the number of polynomial matrices in Hermite form. In this article, we continue that work by calculating the number of minimal systems as well as the number of non-catastrophic convolutional codes based on an estimation for the number of right coprime polynomial matrix pairs. Moreover, we give an asymptotic expression for the number of mutually left coprime polynomial matrices, which enables us to calculate the probability that a parallel connected linear system is reachable.\\ 
The paper is structured as follows. We start with some definitions and preliminary results concerning linear systems, polynomial matrices and general counting strategies. In Section 3, we calculate the probability that a linear system is minimal. In Section 4, we prove the main theorem of this paper, Theorem \ref{mut}, which provides a formula for the probability of mutual left coprimeness. This enables us to compute the probability of reachability for a parallel connected linear system in Subsection 5.1. Finally, in Subsection 5.2., we calculate the probability of non-catastrophicity for a convolutional code.

\section{Preliminaries}
\label{pre}
\subsection{Linear Systems and Polynomial Matrices}
\label{lspm}

We start this subsection with some definitions and properties concerning polynomial matrices over an arbitrary field $\mathbb F$.

\begin{definition}\ \\
A polynomial matrix $Q\in\mathbb F[z]^{m\times m}$ is called \textbf{nonsingular} if $\det(Q(z))\not\equiv 0$. It is called \textbf{unimodular} if $\det(Q(z))\neq 0$ for all $z\in\overline{\mathbb F}$, i.e. if $\det(Q(z))$ is a nonzero constant. This is true if and only if $Q$ is invertible in $\mathbb F[z]^{m\times m}$. Thus, one denotes the group of unimodular $m\times m$-matrices over $\mathbb F[z]$ by $Gl_m(\mathbb F[z])$. Throughout this paper, $\mathbb F[z]$ should denote the ring of polynomial matrices and $\mathbb F(z)$ the field of rational functions with coefficients in $\mathbb F$.
\end{definition}

\begin{definition}\ \\
A polynomial matrix $H\in\mathbb F[z]^{p\times m}$ is called a \textbf{common left divisor} of $H_i\in\mathbb F[z]^{p\times m_i}$ for $i=1,\hdots,N$ if there exist matrices $X_i\in\mathbb F[z]^{m\times m_i}$ with $H_i(z)=H(z)X_i(z)$ for $i=1,\hdots,N$. It is called a \textbf{greatest common left divisor}, which is denoted by $H=\operatorname{gcld}(H_1,\hdots,H_N)$, if for any other common left divisor $\tilde{H}\in\mathbb F[z]^{p\times\tilde{m}}$, there exists $S(z)\in\mathbb F[z]^{\tilde{m}\times m}$ with $H(z)=\tilde{H}(z)S(z)$.\\
A polynomial matrix $E\in\mathbb F[z]^{p\times m}$ is called a \textbf{common left multiple} of $E_i\in\mathbb F[z]^{m_i\times m}$ for $i=1,\hdots,N$ if there exist matrices $X_i\in\mathbb F[z]^{p\times m_i}$ with $X_i(z)E_i(z)=E(z)$ for $i=1,\hdots,N$. It is called a \textbf{least common left multiple}, which is denoted by $E=\operatorname{lclm}(E_1,\hdots,E_N)$, if for any other common left multiple $\tilde{E}\in\mathbb F[z]^{\tilde{p}\times m}$, there exists $R(z)\in\mathbb F[z]^{\tilde{p}\times p}$ with $R(z)E(z)=\tilde{E}(z)$.\\
One defines a \textbf{(greatest) common right divisor}, which is denoted by $\operatorname{gcrd}$, and a \textbf{(least) common right multiple}, which is denoted by $\operatorname{lcrm}$, analoguely.
\end{definition}

\begin{definition}\ \\
Polynomial matrices $H_i\in\mathbb F[z]^{p\times m_i}$ are called \textbf{left coprime} if there exists $X\in\mathbb F[z]^{m\times p}$ such that $H=\operatorname{gcld}(H_1,\hdots,H_N)$ satisfies $HX=I_p$. In particular, one polynomial matrix $H\in\mathbb F[z]^{p\times m}$ is called \textbf{left prime} if there exists $X\in\mathbb F[z]^{m\times p}$ with $HX=I_p$. Analoguely, one defines the property to be \textbf{right coprime} or \textbf{right prime}, respectively. Note that in the case $p=m$, right primeness and left primeness are equivalent to the property to be unimodular.
\end{definition}

For our probability estimations later in this work, we will mainly use the following characterization of coprimeness.

\begin{theorem}\cite[Theorem 2.27]{Fu-He15}\ \\
\vspace{-6mm}
\begin{itemize}
\item[(a)]
The polynomial matrices $H_i\in\mathbb F[z]^{p\times m_i}$ are left coprime if and only if $\operatorname{rk}([H_1(z)\cdots H_N(z)])=p$ for all $z\in\overline{\mathbb F}[z]$.
\item[(b)]
The polynomial matrices $H_i\in\mathbb F[z]^{p_i\times m}$ are right coprime if and only if $\operatorname{rk}\begin{pmatrix}H_1(z)\\ \vdots\\ H_N(z) \end{pmatrix}=m$ for all $z\in\overline{\mathbb F}[z]$.
\end{itemize}
\end{theorem}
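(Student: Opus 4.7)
The plan is to reduce both parts of the theorem to a single statement about polynomial matrices admitting polynomial right inverses, and then to invoke the Smith normal form. I would focus on part (a); part (b) then follows by transposition, since left coprimeness of $H_i^\top$ is right coprimeness of $H_i$, and the rank of a matrix at a point agrees with the rank of its transpose.

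Set $M := [H_1 \cdots H_N] \in \mathbb F[z]^{p \times m}$ with $m = \sum_{i=1}^N m_i$, and write the Smith normal form $M = U\,[D \mid 0]\,V$ with $U \in Gl_p(\mathbb F[z])$, $V \in Gl_m(\mathbb F[z])$, and $D = \operatorname{diag}(d_1,\ldots,d_p)$ satisfying $d_i \mid d_{i+1}$. First I would verify that $H := UD$ is a $\operatorname{gcld}$ of $(H_1,\ldots,H_N)$. Setting $V_1 := [I_p \mid 0] V$ and partitioning $V_1 = [V_1^{(1)} \cdots V_1^{(N)}]$ according to the column-block sizes $m_i$ gives $H_i = H V_1^{(i)}$, so $H$ is a common left divisor. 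Maximality exploits the polynomial right inverse $V^{-1} \begin{pmatrix} I_p \\ 0 \end{pmatrix}$ of $V_1$: any other common left divisor $\tilde H$ with $H_i = \tilde H G_i$ satisfies $M = \tilde H [G_1 \cdots G_N]$, so right-multiplying by this inverse produces a polynomial matrix $S$ with $H = \tilde H S$, which is exactly the defining property of a $\operatorname{gcld}$.

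With the $\operatorname{gcld}$ identified, I would string together the following equivalences. The $H_i$ are left coprime if and only if $H$ admits a polynomial right inverse; since $U$ is unimodular, this holds if and only if $D$ does; as $D$ is diagonal, this happens exactly when every $d_i$ is a nonzero constant; equivalently $D(z_0)$ has rank $p$ for every $z_0 \in \overline{\mathbb F}$; and, since $U(z_0)$ and $V(z_0)$ are invertible at every point (their determinants being nonzero constants), this is in turn equivalent to $\operatorname{rk}\bigl(M(z_0)\bigr) = p$ for all $z_0 \in \overline{\mathbb F}$, which is precisely the condition in part (a).

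The step I expect to be the main obstacle is the maximality verification that $H = UD$ is a genuine $\operatorname{gcld}$, not merely some common left divisor; it hinges on the polynomial right inverse of $V_1$ actually living inside $\mathbb F[z]^{m \times p}$, which uses the unimodularity of $V$ crucially. Once part (a) is in place, part (b) is obtained by applying it to the transposed matrices $H_i^\top \in \mathbb F[z]^{m \times p_i}$ and noting that the rank condition is invariant under transposition.
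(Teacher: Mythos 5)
The paper does not prove this statement at all: it is quoted verbatim as Theorem 2.27 of Fuhrmann--Helmke \cite{Fu-He15} and used as a black box, so there is no in-paper argument to compare against. Your Smith-normal-form proof is correct and self-contained, and it is essentially the standard textbook route: the factorization $M=U[D\mid 0]V$ simultaneously exhibits a concrete $\operatorname{gcld}$ $H=UD$ (your maximality step via the polynomial right inverse $V^{-1}\bigl(\begin{smallmatrix}I_p\\0\end{smallmatrix}\bigr)$ of $V_1$ is the right one, and $S=[G_1\cdots G_N]V^{-1}\bigl(\begin{smallmatrix}I_p\\0\end{smallmatrix}\bigr)$ is indeed polynomial because $V$ is unimodular) and reduces the pointwise rank condition to the invariant factors $d_i$ being nonzero constants. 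Two small points you should make explicit. First, the shape $[D\mid 0]$ with $D$ of size $p\times p$ presupposes $m=\sum m_i\ge p$; if $m<p$ both sides of the equivalence fail trivially (the generic rank of $M$ is already below $p$ and no right inverse can exist), so the theorem still holds, but the case should be dispatched in a sentence. Second, since the $\operatorname{gcld}$ is only determined up to the relations $H=\tilde HS$, $\tilde H=HS'$, you should note in passing that the property of admitting a polynomial right inverse is invariant under this equivalence, so verifying it for your particular $\operatorname{gcld}$ $UD$ suffices for the definition of left coprimeness used in the paper. The transposition reduction of (b) to (a) is fine.
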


For parallel connections of linear systems, the following property will be crucial.

\begin{definition}\ \\
Nonsingular polynomial matrices $D_1,\hdots, D_N\in\mathbb F[z]^{m\times m}$ are called \textbf{mutually left coprime} if for each $i=1,\hdots,N$, $D_i$ is left coprime with $\operatorname{lcrm}\{D_j\}_{j\neq i}$.
\end{definition}

This criterion for mutually left coprimeness is not very easy to handle. Thus, we will employ an equivalent characterization to prove Theorem \ref{mut}, the main result of Section \ref{mlc}.

\begin{theorem}\cite[Proposition 10.3]{Fu-He15}\label{mutcrit} \\
Nonsingular polynomial matrices $D_1,\hdots, D_N\in\mathbb F[z]^{m\times m}$ are mutually left coprime if and only if
$$\mathcal{D}_N:=\left[\begin{array}{cccc}
D_1 & D_2 &  & 0 \\ 
 & \ddots & \ddots &  \\ 
0 &  & D_{N-1} & D_N
\end{array}\right]$$
 is left prime.
\end{theorem}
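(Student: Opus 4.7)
The plan is to apply Theorem~2.27(a) on both sides and compare the resulting pointwise rank conditions. By that theorem, $\mathcal{D}_N$ is left prime iff $\operatorname{rk} \mathcal{D}_N(z_0) = (N-1)m$ for every $z_0 \in \overline{\mathbb F}$, equivalently iff the left null space of $\mathcal{D}_N(z_0)$ is trivial. Writing $y = (y_1,\dots,y_{N-1})$ in blocks of size $m$, the equation $y^{T} \mathcal{D}_N(z_0) = 0$ unfolds to $y_1^{T} D_1(z_0) = 0$, $(y_{j-1}+y_j)^{T} D_j(z_0) = 0$ for $2 \le j \le N-1$, and $y_{N-1}^{T} D_N(z_0) = 0$. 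Writing $L_j(z_0)$ for the left null space of $D_j(z_0)$ and setting $u_j := (-1)^j(y_{j-1}+y_j)$ (with the convention $y_0 = y_N = 0$), these become $u_j \in L_j(z_0)$ subject to the single telescoping relation $\sum_{j=1}^N u_j = 0$. Thus left primeness of $\mathcal{D}_N$ is equivalent to the subspaces $L_1(z_0),\dots,L_N(z_0) \subseteq \overline{\mathbb F}^m$ being linearly independent for every $z_0$.

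For the direction \emph{mutual left coprime $\Rightarrow$ $\mathcal{D}_N$ left prime}, I would argue by contradiction: suppose a nontrivial relation $\sum_j u_j = 0$ with $u_j \in L_j(z_0)$ and $u_i \neq 0$ exists. Since $E_i := \operatorname{lcrm}_{j \neq i} D_j$ satisfies $E_i = D_j X_j$ for each $j \neq i$, one computes $u_j^{T} E_i(z_0) = u_j^{T} D_j(z_0) X_j(z_0) = 0$ for $j \neq i$, and combining with $u_i = -\sum_{j \neq i} u_j$ gives $u_i^{T} E_i(z_0) = 0$. Together with $u_i^{T} D_i(z_0) = 0$, this exhibits a nonzero vector in the left null space of $[D_i(z_0) \mid E_i(z_0)]$, contradicting left coprimeness of $D_i$ and $E_i$ by Theorem~2.27(a).

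For the converse \emph{$\mathcal{D}_N$ left prime $\Rightarrow$ mutual left coprime}, I would assume independence of the $L_j(z_0)$ for all $z_0$ and show each pair $D_i, E_i$ is left coprime. If not, there would be $z_0$ and $0 \neq w \in L_i(z_0)$ with $w^{T} E_i(z_0) = 0$. The argument would be completed by the identity
\[
L(E_i(z_0)) = \sum_{j\neq i} L_j(z_0),
\]
as then $w = \sum_{j \neq i} w_j$ with $w_j \in L_j(z_0)$ yields the forbidden dependence $w - \sum_{j \neq i} w_j = 0$. The inclusion $\supseteq$ is immediate from $E_i = D_j X_j$; the reverse inclusion asserts that evaluation at $z_0$ commutes with the submodule intersection $E_i \mathbb F[z]^m = \bigcap_{j \neq i} D_j \mathbb F[z]^m$, and this is the main technical obstacle. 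My plan would be to localize at $z_0$ to the DVR $\mathbb F[z]_{(z-z_0)}$ and lift any $v \in \bigcap_j D_j(z_0) \overline{\mathbb F}^m$ to a polynomial vector in $\bigcap_j D_j \mathbb F[z]^m$ by successive correction with powers of the uniformizer $z - z_0$, exploiting that the independence assumption already forces pairwise left coprimeness of the $D_j$ (since $L_j(z_0) \cap L_k(z_0) = 0$). The full development of this step is carried out in Fuhrmann--Helmke \cite{Fu-He15}, Proposition~10.3.
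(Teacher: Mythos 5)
The paper itself gives no proof of this statement---it is imported verbatim as \cite[Proposition 10.3]{Fu-He15}---so there is no internal argument to compare yours against; I can only assess the proposal on its own terms. Your reformulation is correct and consistent with how the paper manipulates $\mathcal D_N$ elsewhere (the unfolding of $y^{\top}\mathcal D_N(z_0)=0$ into $y_1^{\top}D_1(z_0)=0$, $(y_{j-1}+y_j)^{\top}D_j(z_0)=0$, $y_{N-1}^{\top}D_N(z_0)=0$ is exactly the one used in the proof of Lemma \ref{det}), and the equivalence of left primeness of $\mathcal D_N$ with the independence of the left kernels $L_1(z_0),\dots,L_N(z_0)$ for all $z_0$ checks out. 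The direction \emph{mutually left coprime $\Rightarrow$ left prime} is complete: a nontrivial relation $\sum_j u_j=0$ with $u_i\neq 0$ does produce a nonzero common left annihilator of $[D_i(z_0)\ E_i(z_0)]$, contradicting Theorem 2.4(a).

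The converse direction, however, contains a genuine gap. It rests entirely on the identity $L(E_i(z_0))=\sum_{j\neq i}L_j(z_0)$, of which you prove only the trivial inclusion $\supseteq$, and you then defer the inclusion you actually need to Fuhrmann--Helmke, Proposition 10.3---that is, to the very statement being proven, which is circular. Worse, that inclusion is simply false without further hypotheses: over any $\mathbb F$ of characteristic $\neq 2$ take $m=2$, $D_2=\bigl[\begin{smallmatrix} z & 1\\ 0 & z\end{smallmatrix}\bigr]$, $D_3=\bigl[\begin{smallmatrix} z & -1\\ 0 & z\end{smallmatrix}\bigr]$. Then $L_2(0)=L_3(0)=\operatorname{span}(e_2^{\top})$ is one-dimensional, while the intersection of the column modules is $\operatorname{diag}(z,z^2)\,\mathbb F[z]^2$, so one may take $\operatorname{lcrm}(D_2,D_3)=\operatorname{diag}(z,z^2)$, whose left kernel at $z_0=0$ is all of $\overline{\mathbb F}^2$. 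Hence the inclusion $\subseteq$ can only hold by exploiting the standing hypothesis that the $L_j(z_0)$ are independent for every $z_0$, and your sketch (``localize at $z_0$ and lift by successive correction with powers of $z-z_0$'') never explains where that hypothesis enters, why the correction process terminates, or why pairwise coprimeness of the $D_j$ suffices when the failure in the example above is precisely a pairwise failure. Until that inclusion is actually established under the independence hypothesis---or the converse is proven by another route, e.g.\ induction on $N$ starting from the case $N=2$ where the statement reduces to the rank criterion---the proof is incomplete.
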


Finally, in Section \ref{rp}, we will need the following well-known criterion for coprimeness of scalar polynomials.

\begin{theorem}\label{syl}\ \\
Two polynomials $p(z)=\sum_{i=0}^mp_iz^{i}$ and $q(z)=\sum_{i=0}^nq_iz^{i}$  are coprime if and only if the Sylvester resultant 
$$\operatorname{Res}(p,q):=\left[\begin{array}{cccccc}
p_0 &  &  & q_0 &  &  \\ 
\vdots & \ddots &   & \vdots & \ddots &   \\ 
p_{m} &   & p_0 & q_n &   & q_0 \\ 
  & \ddots & \vdots &   & \ddots & \vdots \\ 
  &   & p_m &   &   & q_n
\end{array}\right]\in\mathbb F^{(n+m)\times(n+m)}$$
is invertible.
\end{theorem}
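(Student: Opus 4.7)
The natural plan is to recognize the Sylvester resultant as the matrix of a certain linear map between polynomial spaces. Consider
\[
\phi : \mathbb{F}[z]_{<n} \times \mathbb{F}[z]_{<m} \longrightarrow \mathbb{F}[z]_{<n+m}, \qquad (a,b) \longmapsto a\,p + b\,q,
\]
where $\mathbb{F}[z]_{<k}$ denotes the space of polynomials of degree strictly less than $k$. Both sides have dimension $n+m$, so $\phi$ is an isomorphism iff it is injective iff its matrix with respect to the monomial bases is invertible. The first step of the proof is to check that, if one orders the domain basis as $(1,0),(z,0),\dots,(z^{n-1},0),(0,1),(0,z),\dots,(0,z^{m-1})$ and the codomain basis as $1,z,\dots,z^{n+m-1}$, then the matrix of $\phi$ is precisely $\operatorname{Res}(p,q)$ as displayed: the $j$-th column ($j \le n$) records the coefficients of $z^{j-1} p(z)$, giving the shifted strip $p_0,\dots,p_m$, and the $(n+k)$-th column records the coefficients of $z^{k-1} q(z)$, giving the strip $q_0,\dots,q_n$. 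This is a routine bookkeeping step.

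Having made this identification, it suffices to prove that $\phi$ is injective if and only if $\gcd(p,q)=1$. For the ``only if'' direction I would argue contrapositively: if $d := \gcd(p,q)$ has $\deg d \geq 1$, write $p = d\tilde p$ and $q = d\tilde q$ with $\deg \tilde p < m$ and $\deg \tilde q < n$. Then $(\tilde q, -\tilde p)$ is a nonzero element of $\mathbb{F}[z]_{<n}\times\mathbb{F}[z]_{<m}$ with $\phi(\tilde q,-\tilde p) = \tilde q\, p - \tilde p\, q = 0$, so $\phi$ fails to be injective and $\operatorname{Res}(p,q)$ is singular.

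For the ``if'' direction, assume $\gcd(p,q) = 1$ and suppose $\phi(a,b) = 0$, i.e.\ $ap = -bq$ with $\deg a < n$ and $\deg b < m$. Since $p$ and $q$ are coprime in the UFD $\mathbb{F}[z]$, it follows that $q \mid a$; combined with $\deg a < n = \deg q$, this forces $a = 0$, whence $bq = 0$ and so $b = 0$. Thus $\phi$ is injective and $\operatorname{Res}(p,q)$ is invertible.

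I do not anticipate a serious obstacle; the only point requiring genuine care is the first step, verifying that the displayed band matrix really is the matrix of $\phi$ in the chosen bases (in particular, that the theorem implicitly uses the convention $p_m \neq 0$ and $q_n \neq 0$, so that the degree bookkeeping in the injectivity argument is valid). Everything else is a direct application of the definition of coprimality in $\mathbb{F}[z]$ and a dimension count.
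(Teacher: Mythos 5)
Your proof is correct. Note that the paper itself offers no proof of this statement: Theorem~3 (the Sylvester-resultant criterion) is quoted there as a well-known fact, so there is no argument of the author's to compare yours against. What you give is the standard proof -- identifying $\operatorname{Res}(p,q)$ with the matrix of the map $(a,b)\mapsto ap+bq$ on $\mathbb F[z]_{<n}\times\mathbb F[z]_{<m}$ and checking that injectivity is equivalent to $\gcd(p,q)=1$ -- and both directions are carried out correctly, using only that $\mathbb F[z]$ is a UFD and a dimension count. You are also right to flag the one genuine hypothesis hidden in the statement: the argument needs $p_m\neq 0$ and $q_n\neq 0$ (otherwise, e.g., $p=q=1$ viewed with $m=n=1$ gives a singular resultant for coprime polynomials), since the degree bounds $\deg a<n=\deg q$ and $\deg\tilde p<m$ are exactly where these conventions enter.
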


For the second part of this subsection, we consider discrete-time linear control systems of the form
\begin{align}\label{eq:linsys}
x(\tau +1)&=Ax(\tau)+Bu(\tau) \nonumber\\
y(\tau)&=Cx(\tau)+Du(\tau)
\end{align}
with $A\in \mathbb{F}^{n\times n}, B\in
 \mathbb{F}^{n\times m}, C\in\mathbb F^{p\times n}, D\in\mathbb F^{p\times m}$,  input $u\in\mathbb F^m$, state vector $x\in\mathbb F^n$, output $y\in\mathbb F^p$ and $\tau\in\mathbb N_0$.\\
In the following, we will frequently identify this system with the matrix-quadruple $(A,B,C,D)$. Moreover, we denote by $T(z)=C(zI-A)^{-1}B+D$ the \textbf{transfer function} of the system and by $\delta(T)$ its \textbf{McMillan degree}.

\begin{theorem}\cite[Theorem 2.29]{Fu-He15}\ \\
Let $T\in\mathbb F(z)^{p\times m}$ be arbitrary. Then, there exist right coprime polynomial matrices $P\in\mathbb F[z]^{p\times m}$ and $Q\in\mathbb F[z]^{m\times m}$ nonsingular such that $T(z)=P(z)Q(z)^{-1}$.\\
If $\tilde{P}\in\mathbb F[z]^{p\times m}$ and $\tilde{Q}\in\mathbb F[z]^{m\times m}$ are right coprime with $\tilde{Q}$ nonsingular such that $\tilde{P}(z)\tilde{Q}(z)^{-1}=T(z)=P(z)Q(z)^{-1}$, then there exists a (unique) unimodular matrix $U\in Gl_m(\mathbb F[z])$ with $\tilde{P}=PU$ and $\tilde{Q}=QU$.
\end{theorem}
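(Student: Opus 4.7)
The plan is to treat existence and uniqueness separately.

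For existence, I would first clear denominators. Pick any nonzero scalar $q\in\mathbb F[z]$ divisible by every denominator appearing in the entries of $T$, and set $P_0:=qT\in\mathbb F[z]^{p\times m}$. Then $T=P_0(qI_m)^{-1}$ is already a polynomial matrix fraction representation, although the pair $(P_0,qI_m)$ need not be right coprime. To enforce coprimeness, I would extract a greatest common right divisor $G:=\operatorname{gcrd}(P_0,qI_m)\in\mathbb F[z]^{m\times m}$ and write $P_0=PG$, $qI_m=QG$ with $P\in\mathbb F[z]^{p\times m}$ and $Q\in\mathbb F[z]^{m\times m}$. Taking determinants gives $q^m=\det(Q)\det(G)\neq 0$, so $Q$ is nonsingular; by the maximality of $G$ the pair $(P,Q)$ is right coprime, and $T=P_0(qI_m)^{-1}=(PG)(QG)^{-1}=PQ^{-1}$.

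For uniqueness, suppose $(\tilde P,\tilde Q)$ is another right coprime pair with $\tilde Q$ nonsingular and $\tilde P\tilde Q^{-1}=T$. Define $U:=Q^{-1}\tilde Q\in\mathbb F(z)^{m\times m}$; then $\tilde Q=QU$, and since $\tilde P=T\tilde Q=PQ^{-1}\tilde Q=PU$, also $\tilde P=PU$. It therefore suffices to show $U$ is unimodular. Right coprimeness of $(P,Q)$, via the pointwise rank characterization of right coprimeness stated earlier, is equivalent to the existence of polynomial matrices $X\in\mathbb F[z]^{m\times p}$ and $Y\in\mathbb F[z]^{m\times m}$ satisfying the Bezout identity $XP+YQ=I_m$. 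Right-multiplying by $U\in\mathbb F(z)^{m\times m}$ yields $U=X(PU)+Y(QU)=X\tilde P+Y\tilde Q$, a polynomial matrix. The symmetric argument applied to $(\tilde P,\tilde Q)$ shows $U^{-1}=\tilde Q^{-1}Q$ is polynomial as well, hence $U\in Gl_m(\mathbb F[z])$. Uniqueness of $U$ is immediate from $\tilde Q=QU$ and the nonsingularity of $Q$.

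The main obstacle is the supporting polynomial-matrix machinery invoked twice above: the existence of the $\operatorname{gcrd}$ with the required divisor property, and the passage from the pointwise full-rank criterion for right coprimeness to a genuine Bezout identity. Both are classical consequences of column Hermite-form reduction of $\binom{P}{Q}$ over the principal ideal domain $\mathbb F[z]$; once these tools are assumed, the rest of the argument is purely formal.
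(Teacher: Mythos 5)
The paper does not prove this statement; it is quoted verbatim from \cite[Theorem~2.29]{Fu-He15} and used as a black box, so there is no internal proof to compare against. Your argument is the standard matrix-fraction-description proof and is correct: clearing denominators, cancelling a $\operatorname{gcrd}$ (where the determinant identity $q^m=\det(Q)\det(G)$ both gives nonsingularity of $Q$ and lets you cancel $G$ when verifying that any common right divisor of the reduced pair must be unimodular), and then deducing from two Bezout identities that $U=Q^{-1}\tilde Q$ and its inverse are both polynomial. The two ingredients you defer --- existence of a $\operatorname{gcrd}$ with the stated divisor property, and the equivalence of the pointwise full-rank condition with a Bezout identity $XP+YQ=I_m$ --- are exactly the standard consequences of Hermite/Smith reduction over $\mathbb F[z]$ that \cite{Fu-He15} develops before its Theorem~2.29, so assuming them is consistent with the level at which the paper itself operates.
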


Amongst this set of unimodular equivalent right coprime factorizations, we focus on two particular choices, where the denominator matrix has some special properties. To this end, we first need the following definitions.

\begin{definition}\ \\
The \textbf{$j$-th column degree} of a polynomial matrix $H(z)\in\mathbb F[z]^{p\times m}$ is defined as $\nu_j:=\deg_jH:=\max_{1\leq i\leq p}\deg(h_{ij})$. Furthermore, let $[h_{ij}]$ denote the coefficient of $z^{\nu_j}$ in $h_{ij}$. Then, the \textbf{highest column degree coefficient matrix} $[H]_{hc}\in\mathbb F^{p\times m}$ is defined as the matrix consisting of the entries $[h_{ij}]$. For $p=m$, one calls $H$ \textbf{column proper} if $[H]_{hc}\in Gl_m(\mathbb F)$.
\end{definition}

\begin{definition}\cite[Corollary 2.42]{Fu-He15}, \cite[Proposition 5.1]{hin}\ \\
Let $Q\in\mathbb F[z]^{m\times m}$ be nonsingular. Then, there exist
\vspace{-2mm}
\begin{itemize}
\item[(a)]
a unimodular matrix $U_1\in Gl_m(\mathbb F[z])$ such that
$$QU_1=Q^H:=\left[\begin{array}{cccc}
q_{11}^{(H)} & 0 & \hdots & 0 \\ 
\vdots & \ddots & \ddots & \vdots \\ 
\vdots &   & \ddots & 0 \\ 
q_{m1}^{(H)} & \hdots & \hdots & q_{mm}^{(H)}
\end{array}\right]$$
with $q_{ii}^{(H)}$ monic and $\deg{q_{ij}^{(H)}}<\deg{q_{ii}^{(H)}}=:\kappa_{m+1-i}$ for $1\leq j<i\leq m$.\\
$Q^H$ is unique and is called \textbf{Hermite canonical form}. Moreover, $Q^H$ is called of \textbf{simple form} if $\kappa_j=0$ for $j\geq 2$.
\item[(b)]
a unimodular matrix $U_2\in Gl_m(\mathbb F[z])$ such that
$$QU_2=Q^{KH}:=\left[\begin{array}{ccc}
q_{11}^{(KH)} &  \hdots & q_{1m}^{(KH)} \\ 
\vdots &  & \vdots \\ 
q_{m1}^{(KH)} &  \hdots & q_{mm}^{(KH)}
\end{array}\right]$$
with $q_{ii}^{(KH)}$ monic, $\deg{q_{ij}^{(KH)}}<\deg{q_{ii}^{(KH)}}$ for $j\neq i$, $\deg{q_{ji}^{(KH)}}<\deg{q_{ii}^{(KH)}}$ for $j<i$ and  $\deg{q_{ji}^{(KH)}}\leq\deg{q_{ii}^{(KH)}}$ for $j>i$.\\
$Q^{KH}$ is unique and is called \textbf{Kronecker-Hermite canonical form}. Note that it is always column proper.
\end{itemize}
\end{definition}

\begin{theorem}\label{form}\ \\
Let $T(z)=P(z)Q(z)^{-1}$ with $P\in\mathbb F[z]^{p\times m}, Q\in\mathbb F[z]^{m\times m}, \det(Q)\not\equiv 0$ a right coprime factorization of the transfer function. Then, it holds:
\begin{itemize}
\item[(a)]
$\delta(T)=\deg(\det(Q))$ \cite[Theorem 4.24]{Fu-He15}.
\item[(b)]
For every unimodular matrix $U\in Gl_m(\mathbb F[z])$, the pair $(PU,QU)$ is also a right coprime factorization of the corresponding transfer function. Consequently, one could either assume that $Q=Q^{KH}$ or that $Q=Q^{H}$. 
\end{itemize}
\end{theorem}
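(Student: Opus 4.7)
The plan is to recognize that Theorem \ref{form} is essentially a collation of earlier facts, so the proof reduces to citing one reference for (a) and verifying three elementary closure properties for (b).

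For part (a) I would simply cite \cite[Theorem 4.24]{Fu-He15} as the statement does. No independent argument is needed unless one wants to re-derive the Smith--McMillan equation between a right coprime denominator's determinantal degree and the McMillan degree of $T$; since the paper explicitly attributes this to Fuhrmann--Helmke, I would leave it there.

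For part (b) I would split the verification into three steps. First, compute $(PU)(QU)^{-1} = PUU^{-1}Q^{-1} = PQ^{-1} = T(z)$, so $(PU,QU)$ represents the same transfer function. Note that $QU$ is still nonsingular since $\det(QU)=\det(Q)\det(U)$ and $\det(U)$ is a nonzero constant. Second, show that right coprimeness is preserved: by the right-primeness analogue of the earlier rank characterization, $P$ and $Q$ being right coprime means $\operatorname{rk}\begin{pmatrix}P(z)\\ Q(z)\end{pmatrix}=m$ for all $z\in\overline{\mathbb F}$; multiplying on the right by the unimodular matrix $U(z)$, which is invertible pointwise over $\overline{\mathbb F}$, preserves rank, so $\operatorname{rk}\begin{pmatrix}P(z)U(z)\\ Q(z)U(z)\end{pmatrix}=m$ for all $z\in\overline{\mathbb F}$, giving right coprimeness of $(PU,QU)$. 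Third, invoke the definitions preceding the theorem: given a nonsingular $Q$, there exist unimodular $U_1,U_2\in Gl_m(\mathbb F[z])$ with $QU_1=Q^H$ and $QU_2=Q^{KH}$. Applying the first two steps to $U=U_1$ or $U=U_2$ yields the stated conclusion.

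There is really no main obstacle here; the only point deserving a word of care is the pointwise-over-$\overline{\mathbb F}$ invariance of the rank condition under right multiplication by a unimodular matrix, which is the content of using the characterization of right coprimeness from Theorem 2.27 rather than trying to manipulate Bezout identities directly. Once that is noted, the rest is immediate from definitions and the existence of the Hermite and Kronecker--Hermite canonical forms recorded just before the theorem.
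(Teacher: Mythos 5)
Your proposal is correct and matches the paper's intent exactly: the paper gives no explicit proof, treating part (a) as a citation of \cite[Theorem 4.24]{Fu-He15} and part (b) as immediate from the rank characterization of right coprimeness and the existence of the Hermite and Kronecker--Hermite forms stated just beforehand. Your three-step verification of (b) --- same transfer function, preserved nonsingularity, and pointwise rank invariance under right multiplication by a unimodular matrix --- supplies precisely the elementary details the paper leaves implicit.
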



So far, we only focused on the structure of the denominator matrix $Q$. But if it is in Kronecker-Hermite from, i.e. in particular, column proper, one also has some knowledge about the nominator matrix $P$. 

\begin{lemma}\label{degn}\cite[Proposition 2.30]{Fu-He15}\ \\
Let $(A,B,C,D)\in\mathbb F^{n\times n}\times\mathbb F^{n\times m}\times\mathbb F^{p\times n}\times\mathbb F^{p\times m}$ and $C(zI-A)^{-1}B+D=P(z)Q(z)^{-1}$ with $P\in\mathbb F[z]^{p\times m}, Q\in\mathbb F[z]^{m\times m}, \det(Q)\not\equiv 0$ and $Q$ column proper. Then, one has for $j=1,....,m$:
$$\deg_jP(z)\leq\deg_jQ(z)\qquad \text{and}\qquad \deg_jP(z)<\deg_jQ(z)\ \ \text{if}\ \ D=0.$$
\end{lemma}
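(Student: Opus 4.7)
The plan is to show that the transfer function is a proper rational matrix and then exploit the column proper form of $Q$ so that dividing $P$ column by column by $z^{\nu_j}$ produces a proper rational matrix.

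First I would establish that $T(z)=C(zI-A)^{-1}B+D$ is a proper rational matrix, and is strictly proper when $D=0$. This follows from the fact that every entry of $(zI-A)^{-1}$ has numerator degree at most $n-1$ and denominator degree $n$, so $C(zI-A)^{-1}B$ is entry-wise strictly proper; adding the constant matrix $D$ keeps the result proper.

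Next I introduce $\nu_j:=\deg_j Q$ and the diagonal polynomial matrix $\Lambda(z):=\mathrm{diag}(z^{\nu_1},\ldots,z^{\nu_m})$. Writing $Q(z)\Lambda(z)^{-1}$ column by column, the $j$-th column is $Q_j(z)/z^{\nu_j}$, whose entries are proper rational functions with constant part equal to the $j$-th column of $[Q]_{hc}$. Hence $Q\Lambda^{-1}$ is a proper rational matrix with value at infinity equal to $[Q]_{hc}$. The column properness of $Q$ says that $[Q]_{hc}\in Gl_m(\mathbb{F})$, so $Q\Lambda^{-1}$ is biproper; that is, its inverse $\Lambda Q^{-1}$ is also proper. (This is the step that truly uses the column proper assumption, and I expect it to be the main technical point — one has to know that biproperness is characterized by invertibility of the value at infinity.)

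From $P=TQ$ I now compute $P\Lambda^{-1}=T\cdot(Q\Lambda^{-1})$. This is a product of two proper rational matrices of matching sizes, hence itself proper. Entry-wise, properness of $P\Lambda^{-1}$ means each $P_{ij}(z)/z^{\nu_j}$ is a proper rational function, and since $P_{ij}$ is a polynomial this forces $\deg P_{ij}\leq \nu_j$; taking the maximum over $i$ gives $\deg_j P\leq \deg_j Q$.

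For the strict inequality under $D=0$, I observe that $T$ is now strictly proper, so its value at infinity is $0$. The value at infinity of $P\Lambda^{-1}=T\cdot(Q\Lambda^{-1})$ is then $0\cdot[Q]_{hc}=0$, so $P\Lambda^{-1}$ is strictly proper. Entry-wise this means $\deg P_{ij}<\nu_j$, whence $\deg_j P<\deg_j Q$. (Over an arbitrary field $\mathbb{F}$, "value at infinity" should be read algebraically as the constant term after expanding each entry as a Laurent series in $1/z$, but the manipulations go through unchanged since properness is closed under addition and multiplication.)
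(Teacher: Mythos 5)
Your proof is correct. Note, however, that the paper does not prove this lemma at all: it is quoted verbatim from Fuhrmann--Helmke \cite[Proposition 2.30]{Fu-He15}, so there is no in-paper argument to compare against. Your route is the standard one (properness of the transfer function plus the normalization $P\Lambda^{-1}=T\cdot(Q\Lambda^{-1})$ with $\Lambda=\operatorname{diag}(z^{\nu_1},\dots,z^{\nu_m})$), and every step checks out over an arbitrary field once ``value at infinity'' is read as the ring homomorphism from proper rational functions to $\mathbb F$ given by the constant term of the expansion in $1/z$.

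One small observation: the step you flag as ``the main technical point'' --- that $[Q]_{hc}\in Gl_m(\mathbb F)$ makes $Q\Lambda^{-1}$ biproper, i.e.\ $\Lambda Q^{-1}$ proper --- is never actually used in your argument. For both conclusions you only need that $Q\Lambda^{-1}$ is proper (which holds by the definition of $\nu_j$ as the maximal degree in column $j$, with no column-properness assumption) together with multiplicativity of the value at infinity. So your proof in fact establishes the inequalities for any nonsingular $Q$, not just column proper ones; the column-properness hypothesis is what one needs for the converse direction (that the degree bounds force $PQ^{-1}$ to be proper), which is where $\Lambda Q^{-1}$ being proper would enter. This is a harmless redundancy, not a gap.
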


The aim of this article is to achieve probability results by counting the number of coprime polynomial matrix factorizations with special properties. Therefore, in the following subsection, we list some basic counting formulas, which will be useful for these purposes.

\subsection{General Counting Strategies}
\label{cs}

To compute the probability that a mathematical object has a special property, it is necessary to count mathematical objects. Therefore, in the following, we restrict our considerations to a finite field $\mathbb{F}$, which is endowed with the uniform probability distribution that assigns to each field element the same probability
$$t=\frac{1}{|\mathbb{F}|}$$ 
and denote the corresponding probability of a set $A$ by $\operatorname{Pr}(A)$.\\

For our computations, we will need the following lemmata. Those which are not proven here are well-known formulas.

\begin{lemma}\label{glcar}\ \\
The number of invertible $n\times n$-matrices over $\mathbb F$ is equal to
$$|Gl_n(\mathbb F)|=t^{-n^2}\prod_{j=1}^n(1-t^j).$$
\end{lemma}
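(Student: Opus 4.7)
The plan is to count invertible matrices column-by-column, using the fact that an $n\times n$ matrix over $\mathbb{F}$ is invertible if and only if its columns form an ordered basis of $\mathbb{F}^n$. So I would count ordered bases.

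First I would pick the first column: it can be any nonzero vector in $\mathbb{F}^n$, giving $|\mathbb{F}|^n - 1 = t^{-n} - 1$ choices. Then inductively, having chosen $k-1$ linearly independent columns, the $k$-th column can be any vector in $\mathbb{F}^n$ outside the span of the previous columns, giving $|\mathbb{F}|^n - |\mathbb{F}|^{k-1} = t^{-n} - t^{-(k-1)}$ choices. Multiplying over $k = 1, \dots, n$ yields
$$|Gl_n(\mathbb{F})| = \prod_{k=1}^n \left(t^{-n} - t^{-(k-1)}\right).$$

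Next I would factor $t^{-n}$ out of each term to rewrite the product as
$$\prod_{k=1}^n t^{-n}\bigl(1 - t^{n-k+1}\bigr) = t^{-n^2} \prod_{k=1}^n \bigl(1 - t^{n-k+1}\bigr),$$
and finally reindex via $j = n - k + 1$ to obtain $t^{-n^2}\prod_{j=1}^n (1 - t^j)$, as claimed.

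Since the statement is standard, there is no real obstacle; the only thing to watch is the bookkeeping converting $|\mathbb{F}|$-powers into $t$-powers (since $|\mathbb{F}| = t^{-1}$, a power $|\mathbb{F}|^a$ becomes $t^{-a}$) and the reindexing at the end so that the product runs over $j = 1, \dots, n$ rather than in reverse.
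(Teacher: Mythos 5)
Your argument is correct: counting ordered bases of $\mathbb{F}^n$ column by column gives $\prod_{k=1}^n\bigl(t^{-n}-t^{-(k-1)}\bigr)$, and your factoring and reindexing to $t^{-n^2}\prod_{j=1}^n(1-t^j)$ are carried out without error. The paper itself offers no proof of this lemma (it is listed among the ``well-known formulas'' stated without proof), and the standard basis-counting argument you give is exactly the expected justification, so there is nothing to compare beyond noting that your bookkeeping with $|\mathbb{F}|=t^{-1}$ is consistent with the paper's conventions.
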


\begin{lemma}\label{ine}(Inclusion-Exclusion Principle)\ \\
Let $A_1,\hdots,A_n$ be finite sets and $X=\bigcup_{i=1}^n A_i$. For $I\subset\{1,\hdots,n\}$, define $A_I:=\bigcap_{i\in I}A_i$. Then, it holds
$$|X|=\sum_{\emptyset\neq I\subset\{1,\hdots,n\}}(-1)^{|I|-1}|A_I|.$$
\end{lemma}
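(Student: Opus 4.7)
The plan is to prove the identity by a double-counting argument on indicator functions, swapping the order of summation so that each element of $X$ is traced through the right-hand side and shown to contribute exactly $1$.

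The bookkeeping is as follows: for every $x\in X$ set $J(x):=\{\,i\in\{1,\hdots,n\}: x\in A_i\,\}$, which is nonempty precisely because $x$ lies in the union. For any nonempty $I\subset\{1,\hdots,n\}$, one has $x\in A_I=\bigcap_{i\in I}A_i$ if and only if $I\subset J(x)$. Writing $|A_I|=\sum_{x\in X}\mathbf{1}_{A_I}(x)$ and interchanging the two sums yields
$$\sum_{\emptyset\neq I\subset\{1,\hdots,n\}}(-1)^{|I|-1}|A_I|=\sum_{x\in X}\sum_{\emptyset\neq I\subset J(x)}(-1)^{|I|-1}.$$

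The inner sum depends only on $k:=|J(x)|\geq 1$: grouping the subsets of $J(x)$ by their cardinality $j$ rewrites it as $\sum_{j=1}^{k}\binom{k}{j}(-1)^{j-1}$. The binomial theorem gives $0=(1+(-1))^{k}=\sum_{j=0}^{k}\binom{k}{j}(-1)^{j}$, so the sum equals $1$. Hence the right-hand side collapses to $\sum_{x\in X}1=|X|$, proving the lemma.

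There is no real obstacle beyond the binomial identity itself; the one subtlety is to verify that restricting the summation index $I$ to subsets of $J(x)$ is legitimate, which is immediate from the equivalence $x\in A_I\iff I\subset J(x)$ established above. An alternative route — induction on $n$ based on $|A\cup B|=|A|+|B|-|A\cap B|$ combined with the identity $(\bigcup_{i\leq n}A_i)\cap A_{n+1}=\bigcup_{i\leq n}(A_i\cap A_{n+1})$ — would also succeed, but involves more index manipulation in the inductive step, so I would prefer the indicator-function argument above.
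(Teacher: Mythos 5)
Your argument is correct and complete: the interchange of summation is justified since $A_I\subset X$ for every nonempty $I$, and the inner sum over $\emptyset\neq I\subset J(x)$ is correctly evaluated to $1$ via the binomial theorem. The paper itself gives no proof of this lemma --- it is explicitly listed among the ``well-known formulas'' that are stated without proof --- so there is nothing to compare against; your indicator-function double count is the standard argument and settles the statement.
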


\begin{lemma}\label{2coprime}\cite{gar} \\
The probability that $N$ monic polynomials $d_1,\hdots,d_N\in\mathbb F[z]$ with $\deg(d_i)=n_i\in\mathbb N$ for $i=1,\hdots, N$ are coprime is equal to $1-t^{N-1}$.
\end{lemma}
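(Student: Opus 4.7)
The plan is to apply M\"obius inversion on the monoid of monic polynomials in $\mathbb{F}[z]$. For each monic $d \in \mathbb{F}[z]$, let $B_d$ denote the event that $d$ divides every $d_i$; applying the defining identity $\sum_{d \mid f} \mu(d) = [f = 1]$ to $f = \gcd(d_1,\dots,d_N)$ and taking expectations then gives
$$\operatorname{Pr}(d_1,\dots,d_N \text{ coprime}) = \sum_{d \text{ monic}} \mu(d)\,\operatorname{Pr}(B_d).$$

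First I would evaluate each $\operatorname{Pr}(B_d)$. If $\deg d = k$, division by $d$ is a bijection between monic polynomials of degree $n_i$ divisible by $d$ and monic polynomials of degree $n_i - k$, so there are $|\mathbb{F}|^{n_i - k}$ of them when $k \leq n_i$ and none otherwise. By independence of the $d_i$, I obtain $\operatorname{Pr}(B_d) = t^{Nk}$ whenever $k \leq \min_i n_i$ and $\operatorname{Pr}(B_d) = 0$ otherwise. Setting $M_k := \sum_{d \text{ monic},\, \deg d = k} \mu(d)$, the expression collapses to
$$\operatorname{Pr}(\text{coprime}) = \sum_{k=0}^{\min_i n_i} t^{Nk}\, M_k.$$

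The main obstacle is computing the sums $M_k$, and here the arithmetic of $\mathbb{F}[z]$ does the work. I would invoke the zeta function $Z(x) := \sum_{d \text{ monic}} x^{\deg d} = \sum_{k \geq 0} q^k x^k = (1 - qx)^{-1}$, where $q = |\mathbb{F}| = t^{-1}$, together with its Euler product $Z(x) = \prod_{p} (1 - x^{\deg p})^{-1}$ over irreducible monic $p$. Inverting yields $\sum_{d \text{ monic}} \mu(d) x^{\deg d} = 1 - qx$, and comparing coefficients gives $M_0 = 1$, $M_1 = -q$, and, crucially, $M_k = 0$ for all $k \geq 2$. Only the terms $k = 0, 1$ therefore contribute (both are included as soon as every $n_i \geq 1$), and substituting back yields
$$\operatorname{Pr}(\text{coprime}) = 1 - q \cdot t^N = 1 - t^{N-1},$$
which is the claim. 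A cleaner variant avoiding the zeta-function language would compute $M_k$ by grouping monic $d$ of degree $k$ according to their radical and using the standard identity $\sum_{e \mid f} \mu(e) = [f=1]$ for each squarefree core, but the Euler-product route makes the vanishing for $k \geq 2$ essentially a one-line consequence.
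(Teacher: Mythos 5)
Your argument is correct, and it yields the exact (not merely asymptotic) value $1-t^{N-1}$ claimed in the lemma. Note, however, that the paper itself offers no proof to compare against: Lemma 2.3 is quoted from the reference of Garcia-Armas, Ghorpade and Ram, so you are supplying a self-contained derivation where the author defers to the literature. Your route is the standard zeta-function argument for $\mathbb F[z]$: the M\"obius inversion step $\operatorname{Pr}(\text{coprime})=\sum_{d}\mu(d)\operatorname{Pr}(B_d)$ is justified because the sum is finite (terms with $\deg d>\min_i n_i$ vanish), the count $\operatorname{Pr}(B_d)=t^{N\deg d}$ is exact by the division bijection, and the key collapse $M_k=0$ for $k\ge 2$ follows from $1/Z(x)=1-qx$ being a degree-one polynomial. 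The only hypothesis you implicitly use is $n_i\ge 1$ for all $i$, which guarantees that the $k=1$ term actually appears in the truncated sum; this is exactly the assumption $n_i\in\mathbb N$ in the statement, so the proof is complete. A benefit of your approach over the cited combinatorial one is that it makes transparent why the answer is independent of the individual degrees $n_i$ once they are all positive: all dependence on the $n_i$ enters only through the cutoff $\min_i n_i$, and the summands beyond $k=1$ are identically zero.
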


\begin{definition}\label{simple}\ \\
For $n_1,\hdots,n_N\in\mathbb N$, let $X(n_1,\hdots,n_N)$ be the set of all $N$-tuples of matrices $D_i\in\mathbb F[z]^{m\times m}$ in Hermite form with $\deg(D_i)=n_i$ for $i=1,\hdots,N$. Moreover, denote by $\kappa_m^{(i)},\ldots,\kappa_{1}^{(i)}$ the row degrees of $D_i$, i.e. the $(j,j)$-entry of $D_i$ has degree $\kappa_{m-j+1}^{(i)}$ and $\kappa_m^{(i)}+\cdots+\kappa_{1}^{(i)}=n_i$. Furthermore, for\\
$\kappa=(\kappa_m^{(1)},\hdots,\kappa_{1}^{(1)},\hdots,\kappa_m^{(N)},\hdots,\kappa_1^{(N)})$, let $X_{\kappa}(n_1,\hdots,n_N)$ be the subset of $X(n_1,\hdots,n_N)$ for which the row degrees are equal to $\kappa$. Finally, one calls $\mathcal{D}_N=\left[\begin{array}{cccc}
D_1 & D_2 & 0 & 0 \\ 
0 & \ddots & \ddots & 0 \\ 
0 & 0 & D_{N-1} & D_N
\end{array}\right]$ of simple form if $\kappa_j^{(i)}=0$ for $j\geq 2$ and $1\leq i\leq N$.
\end{definition}

\begin{lemma}\label{lcodim}\ \\
The cardinality of $X_{\kappa}(n_1,\hdots,n_N)$ is equal to 
$$\prod_{i=1}^N\prod_{j=1}^{m} t^{-(m-j+1)\cdot \kappa_{j}^{(i)}}=t^{-m(n_1+\cdots n_N)}\prod_{i=1}^N\prod_{j=1}^{m} t^{(j-1)\cdot \kappa_{j}^{(i)}}$$
and the cardinality of 
$X(n_1,\hdots,n_N)$ is equal to
\begin{align*}
t^{-m(n_1+\cdots+n_N)}\prod_{i=1}^N\sum_{\kappa_1^{(i)}+\cdots+\kappa_m^{(i)}
=n_i}\prod_{j=1}^{m} t^{(j-1)\cdot \kappa_{j}^{(i)}}
=t^{-m(n_1+\cdots+n_N)}(1+O(t)).
\end{align*}
Consequently, it holds
\begin{equation}\label{codim}
\frac{|X_{\kappa}(n_1,\hdots,n_N)|}{|X(n_1,\hdots,n_N)|}=t^{c_{\kappa}}\cdot  (1-O(t))\quad \text{with}\quad c_{\kappa}=\sum_{i=1}^N\sum_{j=1}^m(j-1)\kappa^{(i)}_j.
\end{equation}
In particular, one has $c_{\kappa}=0$ if $\mathcal{D}_N$ is of simple form.
\end{lemma}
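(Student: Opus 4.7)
The plan is to count the free coefficients in a Hermite form matrix $D_i$ with prescribed row degrees, multiply over $i$, sum over admissible $\kappa$, and extract the asymptotic.

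First I would unwind the structure: a matrix $D_i\in\mathbb F[z]^{m\times m}$ in Hermite form is lower triangular, with diagonal entry in row $k$ a monic polynomial of degree $\kappa_{m-k+1}^{(i)}$ (giving $\kappa_{m-k+1}^{(i)}$ free coefficients, since the leading coefficient is fixed to $1$), and with each strictly sub-diagonal entry in row $k$ an arbitrary polynomial of degree strictly less than that of the diagonal entry in the same row (giving $\kappa_{m-k+1}^{(i)}$ free coefficients). Hence row $k$ contributes $k\cdot\kappa_{m-k+1}^{(i)}$ free field coefficients, and summing over $k=1,\dots,m$ and re-indexing $j=m-k+1$, the total number of free coefficients for $D_i$ is $\sum_{j=1}^{m}(m-j+1)\kappa_j^{(i)}$. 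Multiplying over $i$ yields
\[
|X_\kappa(n_1,\dots,n_N)|=\prod_{i=1}^N\prod_{j=1}^m t^{-(m-j+1)\kappa_j^{(i)}}.
\]
Splitting $-(m-j+1)=-m+(j-1)$ and using $\kappa_1^{(i)}+\cdots+\kappa_m^{(i)}=n_i$ factors out $t^{-m(n_1+\cdots+n_N)}$, giving the second form of the first displayed equation.

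Next I would obtain $|X(n_1,\dots,n_N)|$ by summing $|X_\kappa|$ over all admissible $\kappa$, i.e. over compositions $\kappa_1^{(i)}+\cdots+\kappa_m^{(i)}=n_i$ for each $i$. Since these sums are independent across $i$, one gets precisely the product expression displayed in the lemma. For the $1+O(t)$ claim, I would observe that in the sum $\sum_{\kappa_1^{(i)}+\cdots+\kappa_m^{(i)}=n_i}\prod_{j=1}^m t^{(j-1)\kappa_j^{(i)}}$ every exponent $(j-1)\kappa_j^{(i)}$ is nonnegative, and vanishes for all $j$ precisely for the unique composition $\kappa_1^{(i)}=n_i,\ \kappa_j^{(i)}=0$ for $j\geq 2$. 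Every other composition has some $j\geq 2$ with $\kappa_j^{(i)}\geq 1$ and so contributes a term of order at least $t$; hence each factor equals $1+O(t)$, and the product over $i=1,\dots,N$ is also $1+O(t)$.

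Finally, dividing the two expressions cancels the $t^{-m(n_1+\cdots+n_N)}$ factors and leaves
\[
\frac{|X_\kappa(n_1,\dots,n_N)|}{|X(n_1,\dots,n_N)|}=\frac{t^{c_\kappa}}{1+O(t)}=t^{c_\kappa}(1-O(t)),
\]
with $c_\kappa=\sum_{i,j}(j-1)\kappa_j^{(i)}$, using $(1+O(t))^{-1}=1-O(t)$. The simple-form case $\kappa_j^{(i)}=0$ for $j\geq 2$ immediately forces $c_\kappa=0$. I do not expect any real obstacle; the only point to be careful about is the bookkeeping of the degree-index conversion (the $(j,j)$-entry has degree $\kappa_{m-j+1}^{(i)}$, not $\kappa_j^{(i)}$), since an off-by-one there would flip the exponent $(j-1)$ into $(m-j)$ and ruin the asymptotic.
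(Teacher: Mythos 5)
Your proof is correct and follows essentially the same route as the paper: count the $k\cdot\kappa_{m-k+1}^{(i)}$ free coefficients in row $k$ of each Hermite form, re-index, factor out $t^{-m(n_1+\cdots+n_N)}$, sum over compositions, and isolate the unique composition with all exponents zero to get the $1+O(t)$ asymptotic. The bookkeeping point you flag (the $(j,j)$-entry having degree $\kappa_{m-j+1}^{(i)}$) is handled identically in the paper's own computation.
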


\begin{proof}\ \\
The $j-1$ polynomials beyond the diagonal of $D_i$ in row $j$ are of degree less than $\kappa_{m-j+1}^{(i)}$, which means that one has $t^{-\kappa^{(i)}_{m-j+1}}$ possibilities for each of them. For the monic polynomial on the diagonal of row $j$, one has $t^{-\kappa^{(i)}_{m-j+1}}$ possibilities, too. Thus, the set $X_{\kappa}(n_1,\hdots,n_N)$ has cardinality 
\begin{align*}
\prod_{i=1}^N\prod_{j=1}^mt^{-j\cdot \kappa^{(i)}_{m-j+1}}&=\prod_{i=1}^N\prod_{j=1}^{m} t^{-(m-j+1)\cdot \kappa_{j}^{(i)}}=\prod_{i=1}^Nt^{-m\sum_{j=1}^m\kappa_{j}^{(i)}}\prod_{j=1}^{m} t^{(j-1)\cdot \kappa_{j}^{(i)}}=\\
&=t^{-m(n_1+\cdots n_N)}\prod_{i=1}^N\prod_{j=1}^{m} t^{(j-1)\cdot \kappa_{j}^{(i)}}
\end{align*}
because $n_i=\sum_{j=1}^m\kappa_j^{(i)}$. The formula for $|X(n_1,\hdots,n_N)|$ follows by summing over all possible values for $\kappa$. For the asymptotic result, one employs that $\prod_{j=1}^{m} t^{(j-1)\cdot \kappa_{j}^{(i)}}=1$ for simple form and $\prod_{j=1}^{m} t^{(j-1)\cdot \kappa_{j}^{(i)}}=O(t)$, otherwise.
%
%
\end{proof}

\begin{lemma}\label{anzirred}\ \\
The number of monic irreducible polynomials in $\mathbb F[z]$ of degree $j$ is equal to 
$$\varphi_j=\frac{1}{j}\sum_{d\mid j}\mu(d)t^{-j/d}=\frac{1}{j}t^{-j}+O(t^{-(j-1)})$$
where $\mu$ counts the number of distinct prime factors of an integer and is zero if the integer is the multiple of a square-number.
\end{lemma}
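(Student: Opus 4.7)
The plan is to derive the Gauss formula for counting monic irreducibles over $\mathbb{F}$ by Möbius inversion applied to the classical factorization of $z^{q^j}-z$, and then extract the asymptotic by isolating the $d=1$ term.

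Writing $q=|\mathbb{F}|=t^{-1}$, I would begin from the standard fact that $\mathbb{F}_{q^j}$ (the splitting field of $z^{q^j}-z$) consists exactly of those elements whose minimal polynomial over $\mathbb{F}$ has degree $d$ dividing $j$. Since $z^{q^j}-z$ is separable, this yields the factorization
\[
z^{q^j}-z \;=\; \prod_{d\mid j}\;\prod_{\substack{f\in\mathbb{F}[z]\\ f\text{ monic irred.}\\ \deg f=d}} f(z).
\]
Comparing degrees on both sides immediately produces the identity
\[
q^{\,j} \;=\; \sum_{d\mid j} d\,\varphi_d.
\]

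Next I would apply the Möbius inversion formula to this arithmetic identity (viewing $j\mapsto j\varphi_j$ and $j\mapsto q^j$ as functions on positive integers). Inversion gives
\[
j\,\varphi_j \;=\; \sum_{d\mid j}\mu(d)\, q^{\,j/d},
\]
so that, after dividing by $j$ and substituting $q=t^{-1}$, one obtains the claimed closed form
\[
\varphi_j \;=\; \frac{1}{j}\sum_{d\mid j}\mu(d)\,t^{-j/d}.
\]

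For the asymptotic estimate, I would isolate the term $d=1$, which contributes $\tfrac{1}{j}t^{-j}$, and bound the remaining summands: every other divisor $d\mid j$ satisfies $d\geq 2$, hence $j/d\leq j/2\leq j-1$ for all $j\geq 2$, and since $|\mu(d)|\leq 1$ and $t<1$, each such term is $O(t^{-(j-1)})$. The number of divisors of $j$ is at most $j$, so summing only changes the implicit constant, giving
\[
\varphi_j \;=\; \frac{1}{j}t^{-j}+O\!\bigl(t^{-(j-1)}\bigr),
\]
with the case $j=1$ being trivial. The only mildly non-routine step is the degree-counting identity from the factorization, and that is a textbook argument; there is no genuine obstacle here.
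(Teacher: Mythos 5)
Your proposal is correct: the degree count from the factorization of $z^{q^j}-z$ into the monic irreducibles of degree dividing $j$, followed by M\"obius inversion and isolation of the $d=1$ term, is the standard derivation of Gauss's formula, and the asymptotic step is sound since every divisor $d\geq 2$ gives $t^{-j/d}\leq t^{-j/2}\leq t^{-(j-1)}$ for $j\geq 2$. The paper itself offers no proof of this lemma, listing it among the ``well-known formulas'' that are not proven there, so there is nothing to compare against beyond noting that your argument is the expected textbook one.
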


\begin{remark}\label{z0}\ \\
If one denotes by $f_{z_0}$ the minimal polynomial of $z_0\in\overline{\mathbb F}$ over $\mathbb F$ and sets $g_{z_0}:=\deg(f_{z_0})$, then for $g\in\mathbb N$, the number of $z_0\in\overline{\mathbb F}$ with $g_{z_0}=g$ is at most $\varphi_g\cdot g=O(t^g)$ since there are $\varphi_g$ possible minimal polynomials for $z_0$ and each of them has at most $g$ different zeros. In particular, for $g=1$, this number is equal to $t$.
\end{remark}


\begin{lemma}\label{res}\ \\
Let $z_0, z_1\in\overline{\mathbb F}$ with $z_0\neq z_1$ 
as well as 
$n\in\mathbb N$ be fixed.
Then, it holds:
\vspace{-2mm}
\begin{itemize}
\item[(a)] 
The number of $d\in\mathbb F[z]$ monic with $\deg(d)=n$ such that $d(z_0)=0$ is equal to $t^{-n+g_{z_0}}$ if $n\geq g_{z_0}$ and zero if $n<g_{z_0}$. Moreover, the number of $d\in\mathbb F[z]$ monic with $\deg(d)=n$ such that $d(z_0)=d(z_1)=0$ is equal to $t^{-n+\deg(\operatorname{lcm}(f_{z_0},f_{z_1}))}$ if $n\geq\deg(\operatorname{lcm}(f_{z_0},f_{z_1}))$ and zero otherwise. In particular, for $z_0, z_1\in\mathbb F$, it is equal to $t^{-n+2}$ if $n\geq 2$ and zero if $n=1$.
\item[(b)]
Let $w, \tilde{w}\in\mathbb F(z_0)[z]$ with $\tilde{w}(z_0)\neq 0$ be fixed.
Then, the number of $d\in\mathbb F[z]$ monic with $\deg(d)=n$ such that $w(z_0)=\tilde{w}(z_0)\cdot d(z_0)$ is at most $t^{-n+1}$. Moreover, the number of $d\in\mathbb F[z]$ with $\deg(d)<n$ such that $w(z_0)=\tilde{w}(z_0)\cdot d(z_0)$ is at most $t^{-n+1}$. In particular, for $z_0\in\mathbb F$, it is equal to $t^{-n+1}$ in both cases.
\item[(c)]
Let $w, \tilde{w}\in\mathbb F(z_0,z_1)[z]$ with $\tilde{w}(z_0)\neq 0\neq\tilde{w}(z_1)$ be fixed.
Then, for $n\geq 2$, the number of $d\in\mathbb F[z]$ with $\deg(d)<n$ such that $w(z_0)=\tilde{w}(z_0)\cdot d(z_0)$ and $w(z_1)=\tilde{w}(z_1)\cdot d(z_1)$ is at most $t^{-n+2}$. In particular, for $z_0, z_1\in\mathbb F$, it is equal to $t^{-n+2}$.
\end{itemize}
\end{lemma}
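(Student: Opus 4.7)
The plan is to treat all three parts uniformly by viewing the constraints as $\mathbb{F}$-linear conditions on the coefficients of $d$, and then count either kernels of evaluation maps (via divisibility) or cosets thereof.

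For part (a), a monic $d$ of degree $n$ with $d(z_0)=0$ is equivalent to $f_{z_0}\mid d$, so writing $d=f_{z_0}q$ with $q$ monic of degree $n-g_{z_0}$ yields $t^{-(n-g_{z_0})}$ solutions when $n\geq g_{z_0}$ and none otherwise. The two-root case is identical with $\operatorname{lcm}(f_{z_0},f_{z_1})$ replacing $f_{z_0}$. For $z_0,z_1\in\mathbb{F}$ the two linear factors are distinct and therefore coprime, so the lcm has degree $2$.

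For (b), since $\tilde{w}(z_0)\neq 0$ the condition rewrites as $d(z_0)=\alpha$ for the fixed element $\alpha:=w(z_0)/\tilde{w}(z_0)\in\mathbb{F}(z_0)$. The solution set, in both the monic-of-degree-$n$ and the degree-$<n$ settings, is either empty or an affine coset of $K_n:=\{p\in\mathbb{F}[z]_{<n}:p(z_0)=0\}$, since the difference of any two solutions lies in $K_n$. Part (a) applied inside $\mathbb{F}[z]_{<n}$ gives $|K_n|=t^{-n+g_{z_0}}$ when $n\geq g_{z_0}$ and $|K_n|=1$ when $g_{z_0}>n$; in either situation $|K_n|\leq t^{-n+1}$, using $g_{z_0}\geq 1$. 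The analogous argument handles (c) via the joint kernel $K_n^{(2)}:=\{p\in\mathbb{F}[z]_{<n}:p(z_0)=p(z_1)=0\}$, whose size is $t^{-n+\deg\operatorname{lcm}(f_{z_0},f_{z_1})}$ when $n\geq\deg\operatorname{lcm}(f_{z_0},f_{z_1})$ and $1$ otherwise.

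The only point I want to flag as requiring care is the bound $\deg\operatorname{lcm}(f_{z_0},f_{z_1})\geq 2$ whenever $z_0\neq z_1$, which controls (c). This reduces to a short case split: if either minimal polynomial already has degree $\geq 2$, the claim is immediate from $\deg\operatorname{lcm}\geq\max(g_{z_0},g_{z_1})$; otherwise $z_0,z_1\in\mathbb{F}$ are distinct, so $(z-z_0)$ and $(z-z_1)$ are coprime linear polynomials and their lcm has degree exactly $2$. For the sharper ``in particular'' statements over $\mathbb{F}$, one finally needs nonemptiness of the coset, which I would produce by explicit construction: in (b) take $d=\alpha$ (constant, degree $<n$) or $d=z^n+(\alpha-z_0^n)$ in the monic case, and in (c), for $n\geq 2$, Lagrange interpolation at the distinct points $z_0,z_1\in\mathbb{F}$ produces a polynomial of degree $\leq 1<n$ with the prescribed values, so the coset is nonempty and the count is exactly $|K_n^{(2)}|=t^{-n+2}$.
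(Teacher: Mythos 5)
Your proof is correct, and parts (b) and (c) are organized differently from the paper's. For (a) both arguments are identical (divisibility by $f_{z_0}$, resp.\ by $\operatorname{lcm}(f_{z_0},f_{z_1})$). For (b) and (c), the paper argues coefficientwise: it fixes all coefficients of $d$ except the constant one (for (b)), or except $a_0,a_1$ (for (c)), and solves the resulting linear system for the remaining coefficients, using $\det\left[\begin{smallmatrix} z_0 & 1\\ z_1 & 1\end{smallmatrix}\right]=z_0-z_1\neq 0$ in (c); the subtlety that the forced coefficients may land outside $\mathbb F$ when $z_0\notin\mathbb F$ is handled by noting this only decreases the count. You instead observe that the solution set is empty or an affine coset of the kernel $K_n$ (resp.\ $K_n^{(2)}$) of the evaluation map, and then count the kernel by reducing to part (a). Your packaging is cleaner: it avoids the case analysis about which coefficient choices remain admissible over $\mathbb F$, it makes the exactness claims over $\mathbb F$ transparent (nonempty coset of known size, with the explicit witnesses you give), and it correctly isolates the one fact that needs checking, namely $\deg\operatorname{lcm}(f_{z_0},f_{z_1})\geq 2$ for $z_0\neq z_1$, which your case split establishes. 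The paper's version buys a little more in exchange: the explicit coefficient-solving viewpoint (``$a_0$ is determined by the others'') is the form in which the lemma is actually invoked later in the paper, e.g.\ in the proofs of Theorems \ref{1} and \ref{rec}, where one needs to know that a particular polynomial entry is ``fixed at $z_0$ by the remaining entries.'' Both arguments are complete.
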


\begin{proof}
\begin{itemize}
\item[(a)]
It holds $d(z_0)=0$ if and only if $f_{z_0}$ divides $d(z)$. Thus, one has to count the number of degree $n$ monic multiples of $f_{z_0}$, which coincides with the number of monic polynomials in $\mathbb F[z]$ of degree $n-g_{z_0}$ if the last expression is non-negative; otherwise $f_{z_0}$ cannot divide $d$. Therefore, one has $t^{-(n-g_{z_0})}$ possibilities for $d$ if $n\geq g_{z_0}$ and if $n<g_{z_0}$, the number of possibilities is equal to zero.\\ 
For the second part of statement (a), one has the condition that $\operatorname{lcm}(f_{z_0},f_{z_1})$ has to divide $d$, which could be treated with a similar argumentation as above. Note that there are only the two possibilities $\operatorname{lcm}(f_{z_0},f_{z_1})=f_{z_0}=f_{z_1}$ and $\operatorname{lcm}(f_{z_0},f_{z_1})=f_{z_0}\cdot f_{z_1}$ because $f_{z_0}$ and $f_{z_1}$ are irreducible. Since $z_0\neq z_1$, for $z_0, z_1\in\mathbb F$, one has $\operatorname{lcm}(f_{z_0},f_{z_1})=(z-z_0)(z-z_1)$.
Thus, for $n=1$, the number of possibilities is equal to zero and for $n\geq 2$, there are $t^{-(n-2)}$ possibilities for $d$.
\item[(b)]
If $d$ is fixed to $w(z_0)/\tilde{w}(z_0)\in\overline{\mathbb F}$ at $z_0$, one could choose all coefficients of $d$ but the constant one randomly and then, solve the corresponding equation with respect to this constant coefficient. Therefore, it is fixed by the other coefficients, which leads to a factor of at most $t$ for the number of possibilities. Note that if $z_0\notin\mathbb F$, for some random choices, one gets a value for the constant coefficient that is not in $\mathbb F$ and thus, not all choices for the other coefficients are possible. But this only decreases the number of possibilities. Thus, one has at most $t^{-n+1}$ possibilities for $d$. If $z_0\in\mathbb F$, all choices for the other coefficients are possible and hence, one has exactly $t^{-n+1}$ possibilities.
\item[(c)]
Denote by $a_0,\hdots,a_{n-1}$ the coefficients of $d$. If one chooses $a_2,\hdots,a_{n-1}$ arbitrarily, one gets a system of two linear equations of the form
$$\left[\begin{array}{cc}
z_0 & 1 \\ 
z_1 & 1
\end{array}\right]\cdot \left(\begin{array}{c}
a_1 \\ 
a_0
\end{array}\right)= \left(\begin{array}{c}
y_1 \\ 
y_2
\end{array}\right) $$
where $y_1$ and $y_2$ depend on $w$, $\tilde{w}$, $z_0$, $z_1$ and $a_2,\hdots,a_n$. Since $\det\left[\begin{array}{cc}
z_0 & 1 \\ 
z_1 & 1
\end{array}\right]=z_0-z_1\neq 0$, there exists a unique solution for $a_0$ and $a_1$. Hence, these two coefficients are fixed by the others which gives a factor of $t^2$ for the number of possibilities. As in part $(b)$, it is not clear that one gets values for $a_0$ and $a_1$ that are elements of $\mathbb F$. Therefore, the number of possibilities is at most $t^{-n+2}$. For $z_0, z_1\in\mathbb F$, one gets $a_0, a_1\in\mathbb F$, and hence, one has exactly $t^{-n+2}$ possibilities.
\end{itemize}
\end{proof}
%
At the end of this section, a method should be introduced, which will be applied several times througout this article.

\begin{lemma}\label{it}(Method of Iterated Column/Row Operations)\ \\
Let $G\in\mathbb F[z]^{n\times m}$ and $z_0\in\overline{\mathbb F}$ with $\operatorname{rk}(G(z_0))<\min(n,m)$.
\begin{itemize}
\item[(a)]
If $m<n$, there exist $k\in\{0,\hdots,m-1\}$, a set of row indices $\{i_1,\hdots,i_k\}\subset\{1,\hdots,n\}$ and values $\lambda_{r}\in\mathbb F(z_0)$, which (only) depend on entries $g_{ij}$ of $G$ with $i\in\{i_1,\hdots,i_k\}$ and on $z_0$, such that
\begin{align}\label{lambda}
g_{i,m-k}(z_0)=\sum_{r=m-k+1}^mg_{ir}(z_0)\cdot\lambda_{r}\qquad\text{for}\ \  i\in\{1,\hdots,n\}\setminus\{i_1,\hdots,i_k\}.
\end{align}
 \item[(b)]
If $n<m$, there exist $k\in\{1,\hdots,n\}$, a set of column indices $\{j_1,\hdots,j_{k-1}\}\subset\{1,\hdots,m\}$ and values $\lambda_{r}\in\mathbb F(z_0)$, which (only) depend on entries $g_{ij}$ of $G$ with $j\in\{j_1,\hdots,j_{k-1}\}$ and on $z_0$, such that
\begin{align}\label{lambda2}
g_{kj}(z_0)=\sum_{r=1}^{k-1}g_{rj}(z_0)\cdot\lambda_{r}\qquad\text{for}\ \  j\in\{1,\hdots,m\}\setminus\{j_1,\hdots,j_{k-1}\}. 
\end{align}
\end{itemize}
\end{lemma}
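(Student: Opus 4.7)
The plan is to prove (a) by iteratively building the pivot set $\{i_1,\ldots,i_k\}$ and the coefficients $\lambda_r$ via a right-to-left column elimination on $G(z_0)$; then (b) will follow from the analogous top-to-bottom row elimination, with the row order fixed at $1,2,\ldots,n$ as demanded by the statement.

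For part (a), I would start with $k=0$, $S=\emptyset$, and maintain the invariant that after step $k$ the $k\times k$ submatrix
$$
M_k(z_0) := \bigl(g_{i_s,r}(z_0)\bigr)_{1\leq s\leq k,\; m-k+1\leq r\leq m}
$$
is invertible over $\mathbb{F}(z_0)$ (vacuously so for $k=0$). The inductive step examines the new column $m-k$: using the invertibility of $M_k(z_0)$, Cramer's rule uniquely determines $\lambda_{m-k+1},\ldots,\lambda_m$ from the $k$ equations
$$
g_{i_s,m-k}(z_0) \;=\; \sum_{r=m-k+1}^m g_{i_s,r}(z_0)\,\lambda_r, \qquad s=1,\ldots,k,
$$
and the resulting $\lambda_r$ manifestly depend only on $z_0$ and on entries $g_{ij}$ with $i\in S$, as required. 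If the identity then automatically holds for every $i\notin S$, the algorithm halts and returns $(k,S,\lambda)$; otherwise I pick $i_{k+1}\notin S$ where it fails and append it to $S$. A short block-triangular computation (the top $k$ entries of column $m-k$ vanish after subtracting the $\lambda$-combination, while the new bottom entry is nonzero by the choice of $i_{k+1}$) shows that $M_{k+1}(z_0)$ is again invertible, so the invariant is preserved.

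To see termination with $k\leq m-1$, each unsuccessful step enlarges both $S$ and the processed column index by one. If $k$ ever reached $m-1$ without stopping, then the $(m-1)\times(m-1)$ minor on rows $S$ and columns $2,\ldots,m$ would be invertible, so those $m-1$ columns of $G(z_0)$ would be linearly independent. Since $\operatorname{rk}(G(z_0))<m$ by hypothesis, column $1$ of $G(z_0)$ must lie in the span of columns $2,\ldots,m$, and the unique coefficients of this expansion are exactly the $\lambda_r$ produced on the rows in $S$; hence the identity automatically extends to all $i\notin S$ and the algorithm halts at $k=m-1$. The main bookkeeping obstacle is precisely verifying that the block-triangular step genuinely preserves invertibility of $M_{k+1}(z_0)$ and that the Cramer output depends solely on rows in $S$. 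Part (b) is then obtained by running the same algorithm on $G(z_0)^\top$, but with the row order fixed at $1,2,\ldots,n$ so that $k$ indexes the first row which becomes expressible in terms of its predecessors, and with column pivots $j_1,\ldots,j_{k-1}$ playing the role of the previous row pivots; the hypothesis $\operatorname{rk}(G(z_0))<n$ again forces termination with $k\leq n$.
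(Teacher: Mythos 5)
Your proof is correct, and while it produces the same pivot structure as the paper's argument, the verification is organized quite differently. The paper literally carries out the column operations, generating a sequence of transformed matrices $G^{(0)},\hdots,G^{(k)}$ through an explicit recursion formula, reads off the stopping condition $g^{(k)}_{i,m-k}(z_0)=0$ for $i\notin\{i_1,\hdots,i_k\}$, and then recovers the coefficients $\lambda_r$ by a reverse induction that unwinds that recursion step by step; the claim that the $\lambda_r$ depend only on the pivot rows has to be tracked through every stage of the unwinding. You instead leave $G$ untouched, maintain the invariant that the pivot minor $M_k(z_0)$ on rows $i_1,\hdots,i_k$ and columns $m-k+1,\hdots,m$ is invertible, and define the $\lambda_r$ in one stroke as the Cramer solution of the pivot system, which makes both $\lambda_r\in\mathbb F(z_0)$ and the dependence claim immediate; the block-triangular determinant computation for $M_{k+1}$ replaces the paper's bookkeeping of which entries each operation alters, and your termination argument (columns $2,\hdots,m$ independent, the rank hypothesis placing column $1$ in their span, and uniqueness of the pivot-system solution identifying the span coefficients with the Cramer $\lambda$'s) is a cleaner version of the paper's observation that the last $k$ columns of $G^{(k)}(z_0)$ are linearly independent. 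What your construction gives up is the paper's explicit pivot-selection rule (always the nonzero entry of least row index), which the lemma does not require but which the paper later exploits in the proof of Theorem \ref{1} to conclude that a pivot landing in a row of $P^H$ forces the corresponding diagonal entry of $Q^H$ to vanish; that is a feature of the particular construction rather than of the statement, so it does not affect the completeness of your proof of the lemma itself. Your one-line reduction of part (b) to the transposed algorithm with reversed ordering matches the paper's own treatment.
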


\begin{proof}
\begin{itemize}
\item[(a)]
Set $p:=n-m$.
If $$\operatorname{rk}\left[\begin{array}{ccc}
g_{11} & \hdots & g_{1m} \\ 
\vdots   &  & \vdots \\ 
g_{m+p,1} & \hdots & g_{m+p,m}
\end{array}\right](z_0)<m, $$
than either $g_{1m}(z_0)=\cdots=g_{m+p,m}(z_0)=0$, which implies that equations \eqref{lambda} are fulfilled for $k=0$ and one is done, or it is possible to choose a nonzero entry from the set $\{g_{1m}(z_0),\hdots,g_{m+p,m}(z_0)\}$. In this case, one chooses the nonzero entry with the least row index, which should be denoted by $i_1$. Then, one subtracts the last column times $\frac{g_{i_1,j}(z_0)}{g_{i_1,m}(z_0)}$ from the $j$-th column for $j=1,\hdots, m-1$, which nullifies the $i_1$-th row but its last entry. Afterwards, this changed $i_1$-th row is used to nullify the other entries of the last column by adding appropriate multiplies of it to the other rows. Note that this final step only changes the last column of $G$.  
Define $G^{(1)}\in\mathbb F[z]^{(m+p)\times m}$ by 
$$g^{(1)}_{ij}=\begin{cases}
g_{ij} &  \text{for}\quad i= i_1,\ j= m\\
g_{ij}-g_{im}\cdot\frac{g_{i_1j}}{g_{i_1m}} &  \text{otherwise}
\end{cases}.$$
Then, it holds $m>\operatorname{rk}(G(z_0))=\operatorname{rk}(G^{(1)}(z_0))$.
One iterates this procedure, i.e. if column $m-1$ of $G^{(1)}(z_0)$ contains an entry that is unequal to zero, one uses it to nullify its row, whose index should be denoted by $i_2$, and afterwards its column. Setting $G^{(0)}:=G$, this leads to a sequence of matrices $G^{(k)}\in\mathbb F[z]^{(m+p)\times m}$ with $\operatorname{rk}(G^{(k)})(z_0)<m$ for $0\leq k\leq m-1$, which is obtained by the recursion formula
$$g^{(k)}_{ij}=\begin{cases} 
g_{ij}^{(k-1)} &  \text{for}\quad i= i_k,\ j= m-k+1\\
g_{ij}^{(k-1)}-g^{(k-1)}_{i,m-k+1}\cdot\frac{g^{(k-1)}_{i_k,j}}{g^{(k-1)}_{i_k,m-k+1}} &  \text{otherwise}
\end{cases}.$$
One stops this iteration when all entries of column $m-k$ of $G^{(k)}$ are zero at $z_0$. Note that the last $k$ columns of $G^{(k)}(z_0)$ are linearly independent since for $j=0,\hdots,k-1$, it holds $g^{(k)}_{i_{j+1},m-j}(z_0)\neq 0$ and $g^{(k)}_{i,m-j}\equiv 0$ for $i\neq i_{j+1}$, as well as $i_s\neq i_r$ for $r\neq s$, per construction. If the iteration does not stop in between, one ends up with the matrix $G^{(m-1)}$, whose entries $g^{(m-1)}_{i,1}$ for $i\in\{1,\hdots,n\}\setminus\{i_1,\hdots,i_{m-1}\}$ are not zero per construction but have to be equal to zero at $z_0$ because of $\operatorname{rk}(G^{(m-1)})(z_0)<m$. Generally, if one stops with $G^{(k)}$, one has the conditions $g^{(k)}_{i,m-k}(z_0)=0$ for $i\in\{1,\hdots,n\}\setminus\{i_1,\hdots,i_k\}$. Using the recursion formula, this leads to 
$$g_{i,m-k}^{(k-1)}(z_0)-g^{(k-1)}_{i,m-k+1}(z_0)\cdot\frac{g^{(k-1)}_{i_{k},m-k}(z_0)}{g^{(k-1)}_{i_{k},m-k+1}(z_0)}=0\ \ \text{for}\ i\in\{1,\hdots,n\}\setminus\{i_1,\hdots,i_k\}.$$
Based on this formula, we will show per (reversed) induction with respect to $s$ that for $0\leq s\leq k$, there exist $\lambda_{r}^{(s)}\in\mathbb F(z_0)$, which only depend on entries of $G$ with row index contained in the set $\{i_1,\hdots,i_{k}\}$ as well as on $z_0$, such that
\begin{align}\label{s}
g_{i,m-k}^{(s)}(z_0)&=\sum_{r=m-k+1}^{m} g_{ir}^{(s)}(z_0)\cdot \lambda_{r}^{(s)}\quad\text{for}\ i\in\{1,\hdots,n\}\setminus\{i_1,\hdots,i_k\}.
\end{align}
The base clause $s=k$ is trivial since one already knows $g_{i,m-k}^{(k)}(z_0)=0$ for $i\in\{1,\hdots,n\}\setminus\{i_1,\hdots,i_k\}$. Now, one assumes that the statement is valid for $s$ and considers the case $s-1$. One obtains, 
\begin{align*}
g_{i,m-k}^{(s-1)}(z_0)&=g_{i,m-k}^{(s)}(z_0)+g_{i,m-s+1}^{(s-1)}(z_0)\cdot\frac{g_{i_{s},m-k}^{(s-1)}(z_0)}{g_{i_s,m-s+1}^{(s-1)}(z_0)}=\\
&=\sum_{r=m-k+1}^{m} g_{ir}^{(s)}(z_0)\cdot \lambda_{r}^{(s)}+g_{i,m-s+1}^{(s-1)}(z_0)\cdot\frac{g_{i_{s},m-k}^{(s-1)}(z_0)}{g_{i_s,m-s+1}^{(s-1)}(z_0)}=\\
&=\sum_{r=m-k+1}^m \left(g_{ir}^{(s-1)}(z_0)-g_{i,m-s+1}^{(s-1)}(z_0)\cdot\frac{g_{i_{s},r}^{(s-1)}(z_0)}{g_{i_s,m-s+1}^{(s-1)}(z_0)}\right)\cdot \lambda_{r}^{(s)}+\\
&+g_{i,m-s+1}^{(s-1)}(z_0)\cdot\frac{g_{i_{s},m-k}^{(s-1)}(z_0)}{g_{i_s,m-s+1}^{(s-1)}(z_0)}=\\
&=\sum_{r=m-k+1}^{m} g_{ir}^{(s-1)}(z_0)\cdot \lambda_{r}^{(s-1)}
\end{align*}
with $\lambda_{r}^{(s-1)}:=\lambda_{r}^{(s)}$ for $r\neq m-s+1$ and 
$$\lambda_{m-s+1}^{(s-1)}:=\frac{g_{i_{s},m-k}^{(s-1)}(z_0)}{g_{i_s,m-s+1}^{(s-1)}(z_0)}-\sum_{m-s+1\neq r\geq m-k+1}\frac{g_{i_{s},r}^{(s-1)}(z_0)}{g_{i_s,m-s+1}^{(s-1)}(z_0)}\cdot \lambda_{r}^{(s)}.$$
Setting $s=0$ in \eqref{s}, completes the proof of part (a).
\item[(b)]
One could prove statement (b) analogously to statement (a). Instead of starting with the last column, one starts considering the first row of $G(z_0)$. If it is not identically zero, one chooses the nonzero entry with the largest column index. Then, one nullifies its row and column with a iteration procedure similar to part (a) but employing row operations instead of column operations.\\
Alternatively, one could apply part (a) to the matrix $G^T$ with inverse numbering of the rows.
\end{itemize}
\end{proof}

\section{Probability of Reachability and Observability}\label{rp}

The aim of this section is to calculate the probability that a linear system is reachable and observable, i.e. minimal.
In \cite{hjl}, Helmke et al. computed the probability that a linear system over a finite field is reachable and achieved the following formula:

\begin{theorem}\label{THMA}\cite[Theorem 1]{hjl}\ \\
The probability that a pair $(A,B)\in \mathbb{F}^{n\times n}\times \mathbb{F}^{n\times
  m}$ is reachable is equal to
\begin{equation}\label{eq:mainform1}
P_{n,m}(t)=\prod_{j=m}^{n+m-1} (1-t^{j})=1-t^m+O(t^{m+1}).
\end{equation}
\end{theorem}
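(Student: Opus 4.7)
My plan is to recast reachability as a polynomial-matrix primeness condition via the Popov--Belevitch--Hautus (PBH) criterion, and then to count using the Hermite-form strategy emphasized in the introduction.

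First, I would invoke the PBH test: the pair $(A,B)$ is reachable if and only if the polynomial matrix $H(z) := [\,zI_n-A \mid B\,] \in \mathbb{F}[z]^{n\times(n+m)}$ satisfies $\operatorname{rk}(H(z_0))=n$ for every $z_0\in\overline{\mathbb{F}}$; by the coprimeness characterization stated earlier in the paper, this is precisely left primeness of $H(z)$. Because $(A,B)\mapsto H(z)$ is a bijection onto the degree-one polynomial matrices in $\mathbb{F}[z]^{n\times(n+m)}$ whose leading coefficient is the fixed rank-$n$ matrix $[I_n\mid 0]$, counting reachable pairs reduces to counting polynomial matrices of this special shape that are left prime.

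Next, I would establish the product formula by proving the multiplicative recursion
\begin{equation*}
\frac{P_{n,m}(t)}{P_{n-1,m}(t)} \;=\; 1 - t^{n+m-1},
\end{equation*}
with base case $n=1$, where reachability reduces to $B\neq 0$ and so $P_{1,m}(t)=1-t^m$. The natural device is to associate to each reachable $(A,B)$ the denominator matrix $Q\in\mathbb{F}[z]^{m\times m}$ of a right-coprime factorization of the input-to-state rational matrix $(zI-A)^{-1}B = P Q^{-1}$, with $Q$ placed in Hermite canonical form. By Theorem~\ref{form}(a) this $Q$ satisfies $\deg\det Q = n$, and by Lemma~\ref{degn} the numerator $P$ has column-degree strictly less than $Q$. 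One then enumerates Hermite-form denominators of total degree $n$ stratum by stratum on the row-degree vector $\kappa$ using Lemma~\ref{lcodim}, and multiplies by the numerator freedom. A short sanity check is the case $m=1$: the controllability matrix $R=[B\mid AB\mid \cdots\mid A^{n-1}B]$ together with the unconstrained extra vector $A^nB\in\mathbb{F}^n$ directly parametrizes reachable pairs by $GL_n(\mathbb{F})\times\mathbb{F}^n$, recovering $P_{n,1}(t)=\prod_{j=1}^n(1-t^j)$.

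The main obstacle for $m>1$ is the combinatorial accounting behind the stratification. The contributions must be summed over all admissible row-degree vectors $\kappa=(\kappa_m,\ldots,\kappa_1)$ with $\kappa_i\geq 0$ and $\sum_i\kappa_i = n$, and after incorporating the numerator freedom and normalizing by the unimodular ambiguity in the factorization the double sum must be shown to collapse to $\prod_{j=m}^{n+m-1}(1-t^j)$. This is essentially a $q$-multinomial identity of the Cauchy type familiar from $GL_n$-counting, and checking it directly is where the bulk of the technical work lies. Once the product formula is in hand, the asymptotic expansion $1-t^m+O(t^{m+1})$ follows immediately by expanding the finite product.
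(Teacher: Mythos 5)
First, note that the paper itself does not prove this statement: it is quoted verbatim as Theorem 1 of \cite{hjl}, so there is no in-paper proof to compare against beyond the remark that the argument in \cite{hjl} proceeds by relating reachable pairs to Hermite forms (cf.\ Corollary \ref{card} and Lemma \ref{Q}). Your plan follows exactly that route, and the route does work, but as written your proposal has genuine gaps at the two decisive points. (i) You assert the multiplicative recursion $P_{n,m}(t)/P_{n-1,m}(t)=1-t^{n+m-1}$ with no argument; nothing in your setup produces it, and you then abandon it in favour of the stratified count, so it is dead weight. (ii) In the stratified count you say one "multiplies by the numerator freedom and normalizes by the unimodular ambiguity," but after fixing $Q$ in Hermite form there is no remaining unimodular ambiguity (the Hermite form is unique); the real issue is that for a fixed denominator $Q$ the admissible numerators $P$ are not all matrices with $\deg_jP<\deg_jQ$ but only those that are right coprime to $Q$ and realizable with state dimension exactly $n$, and one must show this set has cardinality exactly $|GL_n(\mathbb F)|$, independent of the row-degree vector $\kappa$. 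That is precisely the content of Lemma \ref{Q}, which rests on Zaballa's realization results, and your proposal supplies no substitute for it. (iii) You explicitly defer the Gaussian-binomial identity
\[
\sum_{\kappa_1+\cdots+\kappa_m=n} t^{-\sum_{i=1}^m(m-i+1)\kappa_i}
= t^{-mn}\prod_{j=1}^{n}\frac{1-t^{m+j-1}}{1-t^{j}},
\]
calling it "where the bulk of the technical work lies" without doing it. The identity is true (it is the standard evaluation of $\binom{n+m-1}{n}_q$ with $q=t^{-1}$), but a proof that names its hardest step and stops is not a proof.

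Your PBH reformulation, the base case $P_{1,m}=1-t^m$, and the $m=1$ sanity check via the bijection $(A,b)\mapsto(R,A^nb)\in GL_n(\mathbb F)\times\mathbb F^n$ are all correct and show you understand the structure of the argument. To close the gaps you should either prove the counting statement of Lemma \ref{Q} (or cite \cite{zab} for it) and then verify the $q$-binomial identity by induction on $n$ or $m$, or else find a direct inductive argument for the recursion in (i); at present neither path is completed.
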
 
Using the duality between reachability and observability, one could easily deduce the probability of observability:
 
\begin{corollary}\label{obs}\ \\
The probability that a pair $(A,C)\in\mathbb F^{n\times n}\times\mathbb F^{p\times n}$ is observable is equal to 
$$\prod_{j=p}^{n+p-1} (1-t^{j})=1-t^p+O(t^{p+1}).$$
\end{corollary}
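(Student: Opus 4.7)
The plan is to deduce observability from reachability by the standard duality. Concretely, the observability matrix of $(A,C)$ is
$$\mathcal{O}(A,C)=\begin{pmatrix} C \\ CA \\ \vdots \\ CA^{n-1}\end{pmatrix}\in\mathbb F^{pn\times n},$$
while the reachability matrix of $(A^T,C^T)$ is $\mathcal{R}(A^T,C^T)=[C^T\ A^TC^T\ \cdots\ (A^T)^{n-1}C^T]\in\mathbb F^{n\times pn}$. Since $\mathcal{R}(A^T,C^T)=\mathcal{O}(A,C)^T$, the ranks coincide, so $(A,C)$ is observable (i.e.\ $\mathrm{rk}\,\mathcal{O}(A,C)=n$) if and only if $(A^T,C^T)$ is reachable.

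Next I would note that the transposition map
$$\Phi:\mathbb F^{n\times n}\times\mathbb F^{p\times n}\longrightarrow\mathbb F^{n\times n}\times\mathbb F^{n\times p},\qquad (A,C)\longmapsto(A^T,C^T),$$
is a bijection between two finite sets of equal cardinality. Since both sides carry the uniform probability distribution, $\Phi$ is measure-preserving, so the probability that $(A,C)$ is observable equals the probability that $(A^T,C^T)$ is reachable.

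Finally I apply Theorem \ref{THMA} with the role of $m$ played by $p$: the reachable pairs in $\mathbb F^{n\times n}\times\mathbb F^{n\times p}$ have probability $P_{n,p}(t)=\prod_{j=p}^{n+p-1}(1-t^j)=1-t^p+O(t^{p+1})$. Combining with the previous step yields exactly the claimed formula.

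There is essentially no obstacle here beyond writing down the duality identification cleanly; the content is entirely the rank-preserving bijection $\Phi$, after which Theorem \ref{THMA} does all the work.
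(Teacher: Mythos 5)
Your proof is correct and is exactly the duality argument the paper invokes (the paper states only that the corollary follows "using the duality between reachability and observability" and gives no further details). Your identification of $\mathcal{O}(A,C)^T$ with $\mathcal{R}(A^T,C^T)$, the measure-preserving bijection $\Phi$, and the application of Theorem \ref{THMA} with $m$ replaced by $p$ fill in precisely the intended steps.
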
 
 
 
The proof for Theorem 1 of \cite{hjl} uses that the number of reachable pairs is strongly connected with the number of matrices in Hermite form. Employing this correlation again, this theorem leads to the following corollary as well: 
 
\begin{corollary}\label{card}\ \\
The number of nonsingular polynomial matrices $Q\in\mathbb F[z]^{m\times m}$ in Hermite form whose determinant is a (monic) polynomial of degree $n$ is equal to 
\begin{align}\label{|}
H_{n,m}(\mathbb F)&=\sum_{\kappa_1+\cdots+\kappa_m=n}t^{-\sum_{i=1}^m(m-i+1)\cdot \kappa_{i}}=\frac{t^{-n^2-nm}\cdot P_{n,m}(t)}{|GL_n(\mathbb F)|}=\nonumber\\
&=t^{-mn}\prod_{j=1}^{n}\frac{1-t^{m+j-1}}{1-t^{j}}.
\end{align}
\end{corollary}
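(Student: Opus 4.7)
The plan is to establish the three equalities in sequence, treating the first and third as bookkeeping and the middle one as the substantive content that is inherited from the proof of Theorem \ref{THMA} in \cite{hjl}.

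For the first equality, I would just specialize Lemma \ref{lcodim} to $N=1$. A matrix $Q$ in Hermite form is nonsingular with $\det Q$ a monic polynomial of degree $n$ if and only if its diagonal degree vector $(\kappa_m,\ldots,\kappa_1)$ is a composition of $n$ into $m$ non-negative parts, since $\det Q$ is the product of the monic diagonal entries. Lemma \ref{lcodim} then gives, for each fixed $\kappa$, exactly $t^{-\sum_{i=1}^m (m-i+1)\kappa_i}$ matrices in $X_\kappa(n)$, and summing over all such compositions yields the first expression for $H_{n,m}(\mathbb F)$.

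For the second equality, I would invoke the orbit correspondence that underlies Theorem \ref{THMA}. The group $Gl_n(\mathbb F)$ acts freely on the set of reachable pairs $(A,B)$ by $(A,B)\mapsto(S^{-1}AS,S^{-1}B)$, and each orbit is represented uniquely by the Hermite canonical form $Q\in\mathbb F[z]^{m\times m}$ of the denominator in any right coprime factorization $(zI-A)^{-1}B=N(z)Q(z)^{-1}$. Since $\deg\det Q=\deg\det(zI-A)=n$ (by Theorem \ref{form}(a)), this sets up a bijection between the $Gl_n(\mathbb F)$-orbits of reachable pairs and the set counted by $H_{n,m}(\mathbb F)$. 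Combining the orbit size $|Gl_n(\mathbb F)|$ with Theorem \ref{THMA}, which says that the number of reachable pairs is $t^{-n^2-nm}\cdot P_{n,m}(t)$, yields
$$H_{n,m}(\mathbb F)=\frac{t^{-n^2-nm}\cdot P_{n,m}(t)}{|Gl_n(\mathbb F)|}.$$

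For the third equality, I would plug in the explicit product expressions: $P_{n,m}(t)=\prod_{j=m}^{n+m-1}(1-t^j)$ from Theorem \ref{THMA}, and $|Gl_n(\mathbb F)|=t^{-n^2}\prod_{j=1}^n(1-t^j)$ from Lemma \ref{glcar}. The factors of $t^{-n^2}$ cancel, and re-indexing the numerator via $\prod_{j=m}^{n+m-1}(1-t^j)=\prod_{j=1}^n(1-t^{m+j-1})$ gives the final product formula. The main (indeed only) subtlety is step 2, since it silently imports the bijection between $Gl_n$-orbits of reachable pairs and Hermite forms from \cite{hjl}; the other two equalities are formal manipulations.
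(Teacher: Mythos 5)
Your proposal is correct and follows essentially the same route as the paper: the first equality from Lemma \ref{lcodim}, the middle equality from the correspondence between $Gl_n(\mathbb F)$-orbits of reachable pairs and Hermite forms underlying Theorem \ref{THMA} (which the paper simply cites as part of the proof of Theorem 1 of \cite{hjl}, and which reappears as Lemma \ref{Q}), and the third equality by substituting the product formulas from Theorem \ref{THMA} and Lemma \ref{glcar}. The only difference is that you spell out the orbit argument the paper imports by reference, which is a faithful reconstruction rather than a different method.
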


\begin{proof}\ \\
The first equality follows from Lemma \ref{lcodim}
and the second equation of \eqref{|} is part of the proof for Theorem 1 of \cite{hjl}. Finally, the third equation is a consequence of this theorem itself, i.e. of Theorem \ref{THMA} of this work, and of Theorem \ref{glcar}.
\end{proof}
 
\begin{remark}\label{anzkh}\ \\ 
Since both Hermite form and Kronecker-Hermite form are unique, the number of Kronecker-Hermite forms is equal to $H_{n,m}(\mathbb F)$, as well.
\end{remark}
 
In the remaining part of this section, we want to count the number of right coprime matrix pairs $(P,Q)$ as in Theorem \ref{form}, where we assume that $Q$ is in Kronecker-Hermite form to ensure that the factorization of the corresponding transfer function is unique. Since it seems very complicated to achieve an exact formula for the cardinality or probability, respectively, we investigate the asymptotic behaviour, when $1/t$ - the size of the field - tends to infinity.

\begin{definition}\label{M}\ \\
Let $M(p,n,m)$ be the set of all polynomial matrices $G=\left(\begin{array}{c} Q \\ P\end{array} \right)\in\mathbb F[z]^{(m+p)\times m}$ with $P\in\mathbb F[z]^{p\times m}$ and $Q\in\mathbb F[z]^{m\times m}$, where $Q$ is in Kronecker-Hermite form with $\deg(\det(Q))=n$ and $\deg_jP(z)\leq\deg_jQ(z)$ for $j=1,....,m$. Moreover, denote by $P^{rc}_{p,n,m}(t)$ the probability that $G\in M(p,n,m)$ is right prime.
\end{definition}

We continue with a lemma that enables us to write the probability of minimality as a product of the probability for right primeness with the probability of reachability.

\begin{lemma}\label{minl}\ \\
The probability that a linear discrete-time system described by $(A,B,C,D)\in\mathbb F^{n\times n}\times\mathbb F^{n\times m}\times\mathbb F^{p\times n}\times\mathbb F^{p\times m}$ is minimal is equal to
$P_{p,n,m}^{rc}(t)\cdot P_{n,m}(t)$.
\end{lemma}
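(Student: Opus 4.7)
The plan is to factor the probability of minimality as $\operatorname{Pr}(\text{reachable}) \cdot \operatorname{Pr}(\text{observable} \mid \text{reachable})$. Theorem \ref{THMA} handles the first factor, giving $P_{n,m}(t)$, so it remains to identify the conditional probability with $P_{p,n,m}^{rc}(t)$.

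First I would set up the matrix fraction description. For any reachable pair $(A,B)$, there is a right coprime factorization $(zI-A)^{-1}B = N(z) Q(z)^{-1}$ with $Q \in \mathbb{F}[z]^{m \times m}$ in Kronecker-Hermite form; then $\deg\det Q = n$ and $Q$ is uniquely determined by the similarity class of $(A,B)$. Substituting into the transfer function yields $T = (CN + DQ)\,Q^{-1} =: P\,Q^{-1}$, where $\deg_j P \leq \deg_j Q$ by Lemma \ref{degn}. Given reachability, the system $(A,B,C,D)$ is observable (hence minimal) iff the state dimension $n = \deg\det Q$ equals the McMillan degree $\delta(T)$, which by Theorem \ref{form}(a) holds precisely when $(P,Q)$ is right coprime, i.e., when $G = \binom{Q}{P}$ is right prime.

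To turn this characterization into a probability, I would verify that the affine map $(C,D) \mapsto P = CN + DQ$ is a bijection from $\mathbb{F}^{p \times n} \times \mathbb{F}^{p \times m}$ onto the slice $\{P \in \mathbb{F}[z]^{p\times m} : \deg_j P \leq \deg_j Q,\, 1 \leq j \leq m\}$; both sets have cardinality $t^{-p(n+m)}$, so it suffices to check injectivity, and if $\tilde{C} N + \tilde{D} Q \equiv 0$ then $\tilde{C}(zI-A)^{-1}B + \tilde{D} \equiv 0$ as a rational function, whence letting $z \to \infty$ gives $\tilde{D} = 0$ and the reachability of $(A,B)$ then forces $\tilde{C} = 0$ via the rank of the reachability matrix. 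Because reachable pairs have trivial stabilizer under similarity, each Kronecker-Hermite form $Q$ with $\deg\det Q = n$ is the image of exactly $|Gl_n(\mathbb{F})|$ reachable pairs, so the uniform distribution on reachable $(A,B)$ pushes forward to the uniform distribution on such $Q$'s (cf.\ Remark \ref{anzkh}). Combined with the slice-wise bijection $(C,D) \leftrightarrow P$, the uniform distribution on $(A,B,C,D)$ conditioned on reachability pushes forward to the uniform distribution on $M(p,n,m)$, so $\operatorname{Pr}(\text{observable} \mid \text{reachable}) = P_{p,n,m}^{rc}(t)$, completing the proof.

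The main obstacle is establishing the bijection $(C,D) \leftrightarrow P$ together with the equivalence between observability (given reachability) and right coprimeness of $(P,Q)$; both rest on the standard theory of right polynomial matrix fraction descriptions, but care is needed to verify injectivity from reachability and to confirm that the Kronecker-Hermite denominator really has degree equal to the state dimension in the reachable case.
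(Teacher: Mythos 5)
Your proof is correct and follows essentially the same route as the paper: both arguments rest on the correspondence between systems and pairs $(P,Q)\in M(p,n,m)$ via Kronecker--Hermite matrix fraction descriptions, the $|Gl_n(\mathbb F)|$-fold fibering over each such pair, and the identification of minimality (given reachability) with right primeness of $\binom{Q}{P}$ via Theorem \ref{form}(a) and Lemma \ref{degn}. The only difference is organizational — you phrase the count as a conditional probability and spell out the bijection $(C,D)\leftrightarrow P=CN+DQ$ explicitly, whereas the paper counts minimal systems directly using the uniqueness of minimal realizations up to similarity — but the underlying computation is identical.
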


\begin{proof}\ \\
For each $(A,B,C,D)\in\mathbb F^{n\times n}\times\mathbb F^{n\times m}\times\mathbb F^{p\times n}\times\mathbb F^{p\times m}$ describing a minimal system, there exists exactly one right coprime pair $(P,Q)\in\mathbb F[z]^{p\times m}\times\mathbb F[z]^{m\times m}$ where $Q$ is in Kronecker-Hermite form and $C(zI-A)^{-1}B+D=P(z)Q(z)^{-1}$. According to Theorem \ref{form} (a), $\deg(\det(Q))=n$ and according to Lemma \ref{degn}, it holds $\deg_jP(z)\leq\deg_jQ(z)$ for $j=1,....,m$. On the other hand, for every such pair $(P,Q)$, there exist exactly $|GL_n(\mathbb F)|$ minimal realizations $(A,B,C,D)$. Consequently, the number of minimal systems is equal to the number of pairs $(P,Q)$ times $|GL_n(\mathbb F)|$. According to Remark \ref{anzkh}, the number of Kronecker-Hermite forms is equal to $\frac{t^{-n^2-nm}\cdot P_{n,m}(t)}{|GL_n(\mathbb F)|}$. Moreover, for each of them, there are $\prod_{i=1}^m t^{-p(\kappa_i+1)}=t^{-p\sum_{i=1}^m(\kappa_i+1)}=t^{-p(n+m)}$ polynomial matrices $P\in\mathbb F[z]^{p\times m}$ which fulfill $\deg_jP(z)\leq\deg_jQ(z)$ for $j=1,....,m$. Consequently, the corresponding probability is equal to 
\begin{align*}
\frac{P_{p,n,m}^{rc}(t)\cdot t^{-(np+mp)}\cdot \frac{t^{-n^2-nm}\cdot P_{n,m}(t)}{|GL_n(\mathbb F)|}\cdot|GL_n(\mathbb F)|}{|\mathbb F^{n\times n}\times\mathbb F^{n\times m}\times\mathbb F^{p\times n}\times\mathbb F^{p\times m}|}=P_{p,n,m}^{rc}(t)\cdot P_{n,m}(t).
\end{align*}
\end{proof}

To achieve a formula for the probability of minimality, it remains to calculate $P^{rc}_{p,n,m}(t)$.

\begin{theorem}\label{1}\ \\
$$P^{rc}_{p,n,m}(t)=1-t^{p}+O(t^{p+1}).$$
\end{theorem}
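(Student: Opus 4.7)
The plan is to characterize right-primeness via the pointwise rank condition and then estimate the failure probability by inclusion-exclusion over $\overline{\mathbb F}$, grouped according to $g_{z_0}$. By part (b) of the coprimeness theorem, $G = \binom{Q}{P}$ is right prime iff $\operatorname{rk} G(z_0) = m$ for every $z_0 \in \overline{\mathbb F}$; since $Q$ is column proper, a rank drop can occur only at roots of $\det Q$. Writing $B_{z_0}$ for the event $\operatorname{rk} G(z_0) < m$, we have $1 - P^{rc}_{p,n,m}(t) = \operatorname{Pr}(\bigcup_{z_0 \in \overline{\mathbb F}} B_{z_0})$, to be evaluated via Lemma~\ref{ine}.

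For fixed $z_0 \in \mathbb F$, the key observation is that $P(z_0)$ is uniformly distributed on $\mathbb F^{p \times m}$ and independent of $Q$: the $j$-th column of $P(z_0)$ equals $\sum_{i=0}^{\kappa_j} p_{j,i} z_0^i$, and its constant term $p_{j,0} \in \mathbb F^p$ is a free parameter absorbing any target, while distinct columns use disjoint parameters. Applying Lemma~\ref{it}(a) to $G$, the event $B_{z_0}$ forces the relation~(\ref{lambda}) for some $k \geq 1$, and the dominant nullity-one case corresponds to $\operatorname{rk} Q(z_0) = m-1$ with the (up to scalar unique) $v \in \ker Q(z_0)$ annihilated by $P(z_0)$. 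Here, varying the constant term of a diagonal entry of $Q$ and invoking Lemma~\ref{res}(a) gives $\operatorname{Pr}(\det Q(z_0) = 0) = t + O(t^2)$, while $\operatorname{Pr}(P(z_0) v = 0) = t^p$ by uniformity of $P(z_0) v$ in $\mathbb F^p$; together these contribute $t^{p+1} + O(t^{p+2})$. Higher nullity $\dim \ker Q(z_0) \geq 2$ is a codimension-$\geq 4$ condition on $Q$ while the induced singularity of the corresponding $p \times k$ block of $P(z_0)$ has probability at most $t^{p-k+1}$, yielding only $O(t^{p+3})$. Thus $\operatorname{Pr}(B_{z_0}) = t^{p+1} + O(t^{p+2})$ for each $z_0 \in \mathbb F$.

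Summing over the $t^{-1}$ points of $\mathbb F$ gives the main term $t^p + O(t^{p+1})$. The inclusion-exclusion corrections require $\operatorname{Pr}(B_{z_0} \cap B_{z_1})$ for distinct $z_0, z_1 \in \mathbb F$; an analogous two-point analysis using Lemma~\ref{res}(c) to control $\operatorname{Pr}(\det Q(z_0) = \det Q(z_1) = 0)$ gives $O(t^{2(p+1)})$, so the pairwise sum is $O(t^{2p})$, absorbed in $O(t^{p+1})$ for $p \geq 1$. For points $z_0 \in \overline{\mathbb F} \setminus \mathbb F$ with $g_{z_0} = g \geq 2$, Remark~\ref{z0} bounds the count by $O(t^{-g})$ and a similar analysis performed over $\mathbb F(z_0) \cong \mathbb F^g$ gives $\operatorname{Pr}(B_{z_0}) = O(t^{g(p+1)})$, so the total contribution is $O(t^{gp}) = O(t^{2p})$, also absorbed. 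Combining, $1 - P^{rc}_{p,n,m}(t) = t^p + O(t^{p+1})$, which is the desired formula.

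The main obstacle will be the careful book-keeping of the pivot configurations in Lemma~\ref{it}: $B_{z_0}$ decomposes into subcases indexed by $k$ and row index sets $\{i_1, \ldots, i_k\}$, and one must verify that no configuration beyond nullity one contributes to the leading $t^{p+1}$. A secondary subtlety is uniform control across the column-degree vector $\kappa$ of $Q$: when some $\kappa_i = 0$, $Q$ has a trivial row and column forced by the Kronecker-Hermite constraints, and the problem reduces to a smaller instance whose leading-order estimate $t^{p+1}$ is preserved by the same argument.
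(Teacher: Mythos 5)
Your proposal reaches the correct result and its core mechanism is genuinely different from --- and substantially leaner than --- the paper's argument. The paper passes to the Hermite form $G^H=GU$, runs the method of iterated column operations (Lemma \ref{it}), must then undo the unimodular transformation to translate conditions on $P^H$ back into conditions on $P$ (equation \eqref{solve}), and finishes the simple-form case with a Sylvester-resultant rank computation. Your observation that for $z_0\in\mathbb F$ the evaluation $P(z_0)$ is \emph{exactly} uniform on $\mathbb F^{p\times m}$ and independent of $Q$ (every entry of $P$ has a free constant coefficient) short-circuits all of that: conditional on $\dim\ker Q(z_0)=1$ with kernel spanned by $v$, the failure event is precisely $P(z_0)v=0$, of probability $t^p$, while $\operatorname{Pr}(\det Q(z_0)=0)=t+O(t^2)$ follows because non-simple forms carry total probability $O(t)$ by \eqref{codim}. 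The same joint uniformity of $(P(z_0),P(z_1))$ (once all column degrees are $\geq1$) gives the two-point bound $t^{2p}$ directly, replacing the paper's delicate case distinction on whether $u_{l_0,m}(z_1)=0$. This is a cleaner route to the same leading coefficient.

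Two steps need repair, though neither threatens the outcome. First, ``$\dim\ker Q(z_0)\geq2$ is a codimension-$\geq4$ condition'' is a fact about generic matrices and is not justified for the structured $Q$; argue instead via the Hermite form, where nullity $\geq2$ forces at least two diagonal entries $q^H_{ii}(z_0)=0$, hence two row degrees $\geq1$, which by \eqref{codim} costs an extra $O(t)$ beyond the $t^2$ for the two vanishings. This gives only $O(t^3)$ on the $Q$-side, and you must also note that your factor $t^{p-k+1}$ is vacuous when $k>p$, so that case leans entirely on the $Q$-side estimate; one still lands at $O(t^{p+2})$ per point, which suffices after summing over $t^{-1}$ points. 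Second, for $g=g_{z_0}\geq2$ the evaluation $P(z_0)$ is \emph{not} uniform on $\mathbb F(z_0)^{p\times m}$ unless every column degree is at least $g-1$, so $\operatorname{Pr}(B_{z_0})=O(t^{g(p+1)})$ is unjustified; however $p_{ij}(z_0)$ ranges uniformly over an $\mathbb F$-subspace of dimension $\min(g,\nu_j+1)\geq2$, giving $\operatorname{Pr}(P(z_0)v=0)\leq t^{2p}$ and hence a total contribution $O(t^{-g})\cdot t^{g}\cdot t^{2p}=O(t^{2p})$, which is all you need. Finally, truncating the inclusion--exclusion after the pairwise terms should be justified by a Bonferroni inequality (the paper instead shows the terms $a_k(t)$ are decreasing), but that is routine.
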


\begin{proof}\ \\
We prove this theorem by computing the probability of the complementary set, i.e. the probability that there exists $z_0\in\overline{\mathbb F}$ such that $\operatorname{rk} (G(z_0))<m$. 
If $q_{ii}\equiv 1$ for some $i=1,\hdots,m$, all other elements in row $i$ are equal to zero. Hence, 
$\operatorname{rk}(G(z_0))=1+\operatorname{rk}(G_i(z_0))$,
where the index $i$ denotes the fact that the $i$-th row and column of $G$ are deleted. Consequently, one has to prove the statement for $m-1$ in this case. Thus, one could assume without restriction that all column degrees of $Q$ are unequal to zero, i.e. $Q$ has no constant diagonal elements. Hence, the matrix $G$ contains no fixed zeros, i.e. entries that have to be zero because of degree restrictions due to the Kronecker-Hermite form of $Q$. Moreover, no entries of $P$ are forced to be constant by those degree restrictions.\\
First, one considers $G^H:=\left(\begin{array}{c} Q^H \\ P^H\end{array} \right):=\left(\begin{array}{c} QU \\ PU\end{array} \right)=GU$, where $U$ is the unimodular matrix such that $Q^H$ is in Hermite form. Then, one applies the method of iterated column operations to $G^H$ (see Lemma \ref{it} (a)). Since $Q^H$ is lower triangular, the diagonal entries of it are not changed by this iteration process and the entries above the diagonal are identically zero, anyway. Thus, if the iteration stops after step $k$, one has $q_{m-k,m-k}^H(z_0)=0$ as first condition. 
The method of iterated column operations implies that 
if $q^H_{m-\tilde{k},m-\tilde{k}}(z_0)\neq 0$ for $\tilde{k}<k$, one chooses $i_{\tilde{k}+1}=m-\tilde{k}$. Therefore, if a row that belongs to $P^H$, i.e. with index greater than $m$, is nullified, one knows $q^H_{m-\tilde{k},m-\tilde{k}}(z_0)=0$. Define $I:=\{i_1,\hdots,i_{k}\}\cap\{i>m\}$. 
Then, one has the conditions $q_{m-j+1,m-j+1}^H(z_0)=0$ for $i_j\in I$ and $p^H_{i-m,m-k}(z_0)=\sum_{r=m-k+1}^{m} p^H_{i-m,r}(z_0)\cdot \lambda_{r}\quad \text{for}\ i\in\{m+1,\hdots,m+p\}\setminus I$ by Lemma \ref{it} (a).\\
For $P$, $Q$ and $Q^H$, one knows from Lemma \ref{res} that fixing several of the polynomial entries (that are not identically zero due to degree restrictions) at $z_0$ reduces the number of possible matrices at least by a factor of the form $t^h$ for some $h\in\mathbb N$. But unfortunately, one has no information about the effect of fixing polynomials of $P^H$ because one does not know anything about the possible degrees of the entries of this matrix. Thus, one has to switch back from $P^H$ to $P$.
Since $P^H=PU$, one has $p_{ij}^H=\sum_{l=1}^m p_{il}u_{lj}$. Inserting this into the above formula, leads to
$$\sum_{l=1}^mp_{i-m,l}(z_0)u_{l,m-k}(z_0)=\sum_{r=m-k+1}^{m}\sum_{l=1}^m p_{i-m,l}(z_0)u_{lr}(z_0)\cdot \lambda_{r},$$
which is equivalent to
\begin{align}\label{solve}
\sum_{l=1}^mp_{i-m,l}(z_0)\left(u_{l,m-k}(z_0)-\sum_{r=m-k+1}^{m}u_{lr}(z_0)\cdot \lambda_{r}\right)=0.
\end{align}
If $u_{l,m-k}(z_0)-\sum_{r=m-k+1}^{m}u_{lr}(z_0)\cdot \lambda_{r}=0$ for $l=1,\hdots,m$, columns $m-k,\hdots,m$ of $U$ were linearly dependent at $z_0$, which is a contradiction to the fact that $U$ is unimodular. Hence, there exists $l_0\in\{1,\hdots,m\}$ such that one could solve equation \eqref{solve} with respect to $p_{i-m,l_0}$.
Consequently, the $p-|I|$ polynomials $p_{i-m,l_0}$ with $i\in\{m+1,\hdots,m+p\}\setminus I$ are fixed at $z_0$ by $Q^H$ (which determines $Q$ and $U$) and the other entries of $P$. Note that $\lambda_{r}$ only depends on entries of $G^H$ whose row index is contained in the set $\{i_1,\hdots,i_{k}\}$ and hence, only on entries of $G$ whose row index belongs to $\{i_1,\hdots,i_{k}\}$ and on $U$.\\
Let $n_1,\hdots,n_{|I|}$ denote the degrees of the monic polynomials $q_{m-j+1,m-j+1}^H$ with $i_j\in I$ and $n_{|I|+1}$ the degree of $q^H_{m-k,m-k}$. Moreover, $n_{|I|+2}, \hdots, n_{p+1}$  should denote the maximal degrees of the $p-|I|$ fixed polynomial entries from $P$, which are not necessarily monic. Fix $g$ and $z_0$ with $g:=g_{z_0}\leq \min(n_1,\hdots,n_{|I|+1}):=n_{\min}$ as well as $Q^H$ such that $q^H_{m-k,m-k}(z_0)=0$ and $q_{m-j+1,m-j+1}^H(z_0)=0$ for $i_j\in I$. Then, $Q$ and $U$ are determined. Next, choose the polynomials $p_{{i-m},j}$ with $i\in I$ arbitrarily and define\\
$w_i:=-\sum_{l\neq l_0}p_{i-m,l}\left(u_{l,m-k}-\sum_{r=m-k+1}^{m}u_{lr}\cdot \lambda_{r}\right)$ for $i\in\{m+1,\hdots,m+p\}\setminus I$ as well as $\tilde{w}:=u_{l_0,m-k}-\sum_{r=m-k+1}^{m}u_{l_0,r}\cdot \lambda_{r}$.
Applying Lemma \ref{res} (a) and (b) as well as Remark \ref{z0}, one gets that the probability is at most 
$$\sum_{g=1}^{n_{\min}}\frac{g\cdot \varphi_g\cdot \prod_{i=1}^{|I|+1}t^{-n_i+g}\prod_{i=|I|+2}^{p+1}t^{-n_i-1+1}}{t^{-(n_1+n_2+\cdots+n_{p+1}+p-|I|)}}=O\left(\sum_{g=1}^{n_{\min}}t^p
\right)=O(t^p).$$
Furthermore, if one has the additional condition that $Q^H$ is not of simple form, the probability is even $O(t^{p+1})$. This is true since the probability that $G$ is not of simple form is $O(t)$ (see \eqref{codim}) and the considerations we made so far are valid for all values of $\kappa$, which is defined as in Definition \ref{simple}. Thus, it remains to consider the case that $Q^H$ is of simple form. Here, one has the condition:
$$\operatorname{rk}\left[\begin{array}{ccc}
I_{m-1} &  & 0 \\ 
q^H_{m1} & \hdots & q^H_{mm} \\ 
p^H_{11} & \hdots & p^H_{1m} \\ 
\vdots   &  & \vdots \\ 
p^H_{p1} & \hdots & p^H_{pm}
\end{array}\right](z_0)<m \Leftrightarrow q_{mm}^H(z_0)=p^H_{1m}(z_0)=\cdots=p^H_{pm}(z_0)=0.$$
Again, one has to switch back from $P^H$ to $P$. Doing this, one obtains the condition 
\begin{align}\label{u}
q_{mm}^H(z_0)=\sum_{l=1}^m p_{1l}u_{lm}(z_0)=\cdots=\sum_{l=1}^m p_{pl}u_{lm}(z_0)=0.
\end{align}
There are at most $g\cdot \varphi_g\cdot t^{-n+g}=O(t^{-n})$ possibilities for $z_0$ with $g_{z_0}=g$ and $q_{mm}^H$ monic with $\deg(q_{mm}^H)=n$ and $q_{mm}^H(z_0)=0$.
One fixes $z_0$ and $Q^H$ with these properties, which determines $U$ and $Q$ as well. Since $U$ is unimodular, there is a $l_0$ with $u_{l_0,m}(z_0)\neq 0$. Fix all entries of $P$ but those in column $l_0$ and set $u:=u_{l_0,m}$, $p^{(j)}:=p_{j,l_0}$ and $s^{(j)}:=\sum_{l\neq l_0}p_{jl}u_{lm}$ for $j=1,\hdots,p$. Moreover, denote by $f:=f_{z_0}$ the minimal polynomial of $z_0$. Then, one has the conditions $p^{(j)}\cdot u+s^{(j)}=f\cdot h^{(j)}$ for some $h^{(j)}\in\mathbb F[z]$ and $j=1,\hdots,p$. Note that here, $u$, $f$ and $s^{(j)}$ are already fixed. If one writes the involved polynomials as sums of monomials, one gets
$$\left(\sum_{i=0}^{\nu_{l_0}}p^{(j)}_iz^i\right)\cdot
\left(\sum_{i=0}^{\gamma}u_iz^i\right)+\left(\sum_{i=0}^{\beta_j}
s^{(j)}_iz^i\right)=\left(\sum_{i=0}^{g}f_iz^i\right)\cdot
\left(\sum_{i=0}^{\alpha_j}h^{(j)}_iz^i\right),$$
where the degrees $\gamma$ and $\beta_j$ are already fixed and  $\alpha_j=\max(\nu_{l_0}+\gamma,\beta_j)-g$. Equating coefficients, leads to
$$\underbrace{\left[\begin{array}{cccccc}
-u_0 &  &  & f_0 &  &  \\ 
\vdots & \ddots &   & \vdots & \ddots &   \\ 
-u_{\gamma} &   & -u_0 & \vdots &   & f_0 \\ 
  & \ddots & \vdots & f_g  &  & \vdots \\ 
  &   & -u_{\gamma} &   & \ddots  & \vdots \\
   &  &  &  & & f_g
\end{array}\right]}_{:=F\in\mathbb F^{(\alpha_j+g+1)\times (\nu_{l_0}+\alpha_j+2)}}\begin{pmatrix}
p^{(j)}_0\\ \vdots\\ p^{(j)}_{\nu_{l_0}}\\h^{(j)}_0\\ \vdots\\ h^{(j)}_{\alpha_j}
\end{pmatrix}=\begin{pmatrix}
s_0\\ \vdots\\ s_{\beta_j}\\ 0\\ \vdots\\ 0
\end{pmatrix},$$
where $f_g=1$ because minimal polynomials are monic per definition.
The number of possibilities for $h^{(j)}$ and $p^{(j)}$ to fulfill this equation is at most $t^{-(\nu_{l_0}+\alpha_j+2-\operatorname{rk}(F))}$. Therefore, one has to determine $\operatorname{rk}(F)$. In the following, it is shown that $F$ is of full rank, i.e. $\operatorname{rk}(F)=\min(\alpha_j+g+1,\nu_{l_0}+\alpha_j+2)$.

Case 1: $g\leq \nu_{l_0}+1$\\
In this case, one has to show the surjectivity of $F$. Since $f$ has been defined as the minimal polynomial of $z_0$ and $u(z_0)\neq 0$, one knows that $-u$ and $f$ are coprime. According to Lemma \ref{syl}, this implies that the Sylvester resultant $\operatorname{Res}(-u,f)$, which is the submatrix of $F$ consisting of  columns $1,\hdots,g,\nu_{l_0}+2,\hdots,\nu_{l_0}+\gamma+1$ and rows $1,\hdots, \gamma+g$, is invertible. This is well-defined because $\nu_{l_0}+\alpha_j+2\geq \nu_{l_0}+(\nu_{l_0}+\gamma-g)+2\geq \gamma+g$. Denote by $\tilde{F}$ the matrix for which in $F$ the columns $g+1,\hdots,\nu_{l_0}+1$ are replaced by columns containing only zeros. Obviously, $\operatorname{rk}(\tilde{F})\leq\operatorname{rk}(F)$ and thus, it is sufficient to show the surjectivity of $\tilde{F}$. The span of the first $\gamma+g$ rows of $\tilde{F}$ is equal to the span of the vectors $e_1^\top,\hdots,e_g^\top,e_{\nu_{l_0}+2}^\top,\hdots,e_{\nu_{l_0}+\gamma+1}^\top\in \mathbb F^{1\times (\nu_{l_0}+\alpha_j+2)}$, where $e_j$ denotes the $j$-th unit vector in $\mathbb F^{\nu_{l_0}+\alpha_j+2}$. The matrix consisting of the remaining rows of $\tilde{F}$ has the form $\left[\begin{array}{cccccc}
0 & \hdots & 0 & -1 &   & \ast \\ 
\vdots &   & \vdots &   & \ddots &   \\ 
0 & \hdots & 0 & 0 &   & -1
\end{array}\right]\in\mathbb F^{(\alpha_j+1-\gamma)\times (\nu_{l_0}+\alpha_j+2)}$, i.e. its row span is equal to the span of $e^\top_{\nu_{l_0}+\gamma+2},\hdots,e^\top_{\nu_{l_0}+\alpha_j+2}$. Consequently, the row span of $\tilde{F}$ is equal to the span of $e_1^\top,\hdots,e_g^\top,e_{\nu_{l_0}+2}^\top,\hdots,e_{\nu_{l_0}+\alpha_j+2}^\top$ and hence $\tilde{F}$ and $F$ are surjective.

Case 2: $g>\nu_{l_0}+1$\\
Here, one has to show that $F$ is injective. Choose
$(p^{(j)}_0, \hdots, p^{(j)}_{\nu_{l_0}}, h^{(j)}_0, \hdots, h^{(j)}_{\alpha_j})^\top$ with 
$F\cdot (p^{(j)}_0, \hdots, p^{(j)}_{\nu_{l_0}}, h^{(j)}_0, \hdots, h^{(j)}_{\alpha_j})^\top=(0,\hdots,0)^\top$, i.e.\\
 $-u(z)p^{(j)}(z)+f(z)h^{(j)}(z)=0$.
Since $f$ and $u$ are coprime, it follows that $f$ divides $p^{(j)}$. But because of $\deg(f)=g>\nu_{l_0}+1>\deg(p^{(j)})$ this implies $p^{(j)}\equiv 0$ and hence $h^{(j)}\equiv 0$, too. This shows the injectivity of $F$.\\
In summary, the probability that \eqref{u} is fulfilled is at most
$$g\cdot \varphi_g\cdot t^{g}\cdot \prod_{j=1}^p t^{-(\alpha_j+1)+\min(\alpha_j+g+1,\nu_{l_0}+\alpha_j+2)}=g\cdot \varphi_g\cdot t^{g}\cdot \prod_{j=1}^p t^{\min(g,\nu_{l_0}+1)}.$$
For $g\geq 2$, this probability is $O(t^{2p})=O(t^{p+1})$ since we assumed $\nu_{i}\geq 1$ for $i=1,\hdots,m$ at the beginning of this proof.\\
For $g=1$, write $\mathbb F=\{z_1,\hdots, z_{t^{-1}}\}$ and let $A_i$ be the set of matrices $G\in M(p,n,m)$ for which \eqref{u} is fulfilled for $z_i$. Then, it follows from the preceding computations that $P^{rc}_{p,n,m}(t)=1-\operatorname{Pr}\left(\bigcup_{i=1}^{t^{-1}} A_i\right)+O(t^{p+1})$. Using the inclusion-exclusion principle (see Lemma \ref{ine}), one gets
$$P^{rc}_{p,n,m}(t)=1-\sum_{\emptyset\neq I\subset \{1,\hdots,t^{-1}\}}(-1)^{|I|-1}\operatorname{Pr}(A_I)+O(t^{p+1}).$$
Since $\operatorname{Pr}(A_I):=\operatorname{Pr}(\bigcap_{i\in I}A_i)$ only depends on $|I|$, it follows:
$$P^{rc}_{p,n,m}(t)=1+\sum_{k=1}^{t^{-1}}(-1)^k\binom{t^{-1}}{k}\operatorname{Pr}(\tilde{A}_k)+O(t^{p+1}),$$
where $\operatorname{Pr}(\tilde{A}_k)$ is the probability for the intersection of $k$ pairwisely different sets $A_i$. Furthermore, according to Lemma \ref{res} (a) and (b) with $\tilde{w}:=u$ and $w_j:=-s^{(j)}$, it holds $\operatorname{Pr}(A_i)=t^{p+1}$ for $i=1,\hdots, t^{-1}$ and therefore,
$$P^{rc}_{p,n,m}(t)=1-t^{p}+\sum_{k=2}^{t^{-1}}(-1)^k\binom{t^{-1}}{k}\operatorname{Pr}(\tilde{A}_k)+O(t^{p+1}).$$
Define $a_k(t):=\binom{t^{-1}}{k}\operatorname{Pr}(\tilde{A}_k)\geq 0$. It holds $\operatorname{Pr}(\tilde{A}_{k+1})\leq t\cdot \operatorname{Pr}(\tilde{A}_k)$ since the number of possibilities for $q_{mm}^H$ decreases by (at least) the factor $t$ if one requires an additional zero for this polynomial (for $k+1>n$, there is even no possibility for $q_{mm}^H$), and surely, the number of possibilities for the polynomials from $P$ can only decrease if one has additional conditions. Consequently, the sequence $a_k(t)$ is decreasing and one obtains
$$\sum_{k=2}^{t^{-1}}(-1)^k\binom{t^{-1}}{k}\operatorname{Pr}(\tilde{A}_k)=\sum_{k=2}^{t^{-1}}(-1)^k a_k(t)\leq a_2(t).$$
Hence, it remains to show that $a_2(t)=O(t^{p+1})$. Therefore, one has to consider equations \eqref{u} for $z_0, z_1\in\mathbb F$ with $z_0\neq z_1$ and $u_{l_0,m}(z_0)\neq 0\neq u_{l_1,m}(z_1)$. The number of possibilities for $q_{mm}^H$ is at most $t^{-n+2}$. Moreover, one chooses $l_1=l_0$ if possible. Then, the polynomials $p_{1,l_0},\hdots,p_{p,l_0}$ are fixed at $z_0$ and $z_1$ by the values of the other polynomials from $G$. According to Lemma \ref{res} (c), which could be applied since $\nu_{l_0}\geq 1$, this decreases the number of possibilities by the factor $t^{2p}$. Hence, one has $a_2(t)\leq\binom{t^{-1}}{2}t^{2+2p}\leq t^{2p}\leq t^{p+1}$ in the case $l_1=l_0$. If it is not possible to choose $l_1=l_0$, one knows $u_{l_0,m}(z_1)=0$. Thus, the values of the polynomials $p_{i,l_1}$ for $i=1,\hdots,p$ at $z_1$ are independent of 
the polynomials $p_{i,l_0}$ for $i=1,\hdots,p$. Hence, one first chooses the entries of $P$ but those of columns $l_0$ and $l_1$ randomly, which fixes column $l_1$ at $z_1$. This decreases the number of possibilities by the factor $t^p$. Afterwards, one chooses the polynomials of column $l_1$ in such way that they fulfill the mentioned condition at $z_1$, which finally, fixes the polynomials of column $l_0$ at $z_0$. This contributes again the factor $t^p$ to the probability. In summary, one has $a_2(t)\leq\binom{t^{-1}}{2}t^{2+p+p}\leq  t^{p+1}$, which completes the proof of the whole theorem.
\end{proof}

Inserting the preceding estimation as well as the result from Theorem \ref{THMA} into the formula of Lemma \ref{minl}, one finally obtains an estimation for the probability of minimality.

\begin{theorem}\label{min}\ \\
The probability that a linear discrete-time system described by $(A,B,C,D)\in\mathbb F^{n\times n}\times\mathbb F^{n\times m}\times\mathbb F^{p\times n}\times\mathbb F^{p\times m}$ is minimal is equal to
$$1-t^m-t^p+O(t^{\min(p,m)+1}).$$
\end{theorem}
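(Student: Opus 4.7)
The plan is to combine the two pieces that have just been assembled. By Lemma \ref{minl}, the probability of minimality factors as $P^{rc}_{p,n,m}(t)\cdot P_{n,m}(t)$, so it suffices to multiply the asymptotic expansions already in hand and collect terms up to the required order.

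First I would substitute the expansion from Theorem \ref{THMA}, namely $P_{n,m}(t)=1-t^m+O(t^{m+1})$, and the expansion from Theorem \ref{1}, namely $P^{rc}_{p,n,m}(t)=1-t^p+O(t^{p+1})$. Multiplying these out gives
\begin{align*}
P^{rc}_{p,n,m}(t)\cdot P_{n,m}(t) &= \bigl(1-t^p+O(t^{p+1})\bigr)\bigl(1-t^m+O(t^{m+1})\bigr)\\
&= 1 - t^m - t^p + t^{m+p} + O(t^{m+1}) + O(t^{p+1}).
\end{align*}

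Next I would absorb the remaining terms into a single error term. The cross product $t^{m+p}$ satisfies $t^{m+p}=O(t^{\min(p,m)+1})$ (since $m\geq 1$ and $p\geq 1$, so $m+p\geq \min(p,m)+1$), and likewise each of $O(t^{m+1})$ and $O(t^{p+1})$ is $O(t^{\min(p,m)+1})$. Collecting everything yields
\begin{equation*}
P^{rc}_{p,n,m}(t)\cdot P_{n,m}(t) = 1 - t^m - t^p + O(t^{\min(p,m)+1}),
\end{equation*}
which by Lemma \ref{minl} is precisely the probability of minimality.

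There is no serious obstacle here: the entire content sits in the two preceding results (Theorem \ref{THMA} giving reachability and Theorem \ref{1} giving right primeness), together with the factorization in Lemma \ref{minl}. The only mild care needed is to verify that the degenerate bookkeeping of the cross terms $t^{m+p}$ and of the $O$-terms absorbs cleanly into $O(t^{\min(p,m)+1})$, which it does since both $m\geq 1$ and $p\geq 1$.
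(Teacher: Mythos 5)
Your proposal is correct and is exactly the paper's argument: the paper proves this theorem by the single sentence "Inserting the preceding estimation as well as the result from Theorem \ref{THMA} into the formula of Lemma \ref{minl}," i.e., multiplying $P^{rc}_{p,n,m}(t)=1-t^p+O(t^{p+1})$ and $P_{n,m}(t)=1-t^m+O(t^{m+1})$ and absorbing the cross terms into $O(t^{\min(p,m)+1})$. Your explicit bookkeeping of the term $t^{m+p}$ and the two error terms is a correct (and slightly more careful) rendering of the same computation.
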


\section{Probability of Mutual Left Coprimeness}
\label{mlc}

The aim of this section is to calculate the probability that finitely many nonsingular polynomial matrices are mutually left coprime. This main result, stated in Theorem \ref{mut}, will be needed in the following section concerning parallel connections of linear systems. We start with the case $N=2$, then prove a recursion formula for the considered probability, which we will finally solve to achieve Theorem \ref{mut}.

\begin{theorem}\label{2}\ \\
The probability that two matrices $D_1,D_2\in\mathbb F[z]^{m\times m}$ in Hermite form with $\deg(D_i)=n_i$ for $i=1,2$ are left coprime is equal to $1-t^{m}+O(t^{m+1})$.
\end{theorem}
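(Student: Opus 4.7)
The plan is to compute the probability of the complementary event, namely that there exists $z_0 \in \overline{\mathbb F}$ with $\operatorname{rk}[D_1(z_0)\,|\,D_2(z_0)] < m$, and show that it equals $t^m + O(t^{m+1})$. The strategy mirrors the endgame of the proof of Theorem~\ref{1} but is considerably cleaner, since both matrices are square Hermite forms with no auxiliary ``numerator'' matrix to worry about.

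First I would reduce to the case where both $D_1$ and $D_2$ are in simple form. By Lemma~\ref{lcodim} the set of pairs where at least one of them is not in simple form carries probability $O(t)$, and on this set the rank-drop probability at any given $z_0$ is itself $O(t^m)$ by a direct application of Lemma~\ref{it}(b) to $[D_1\,|\,D_2]$ combined with Lemma~\ref{res}(a)--(b); multiplying gives an overall non-simple contribution of $O(t^{m+1})$. In the simple-form case $D_j$ has the block form $\bigl(\begin{smallmatrix} I_{m-1} & 0 \\ r_j^{\top} & q_j \end{smallmatrix}\bigr)$, where $q_j := q_{mm}^{(j)}$ is monic of degree $n_j$ and each component of $r_j \in \mathbb F[z]^{m-1}$ has degree less than $n_j$. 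Inspecting the span of the first $m-1$ rows of $[D_1\,|\,D_2]$ at $z_0$, rank drop is equivalent to the $m+1$ scalar conditions
\[
q_1(z_0) = q_2(z_0) = 0 \quad \text{and} \quad r_1(z_0) = r_2(z_0) \text{ in } \mathbb F^{m-1},
\]
involving $m+1$ disjoint polynomial entries of the Hermite data.

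For fixed $z_0 \in \mathbb F$, Lemma~\ref{res}(a) gives probability exactly $t$ for each of the two vanishing conditions, while Lemma~\ref{res}(b) with $\tilde w \equiv 1$ gives probability exactly $t$ for each of the $m-1$ componentwise equalities $r_{1,i}(z_0) = r_{2,i}(z_0)$. By disjointness of the involved entries the joint probability is $t^{m+1}$, and summing over the $t^{-1}$ elements of $\mathbb F$ produces the leading term $t^m$. To control the error I would mimic the end of Theorem~\ref{1}: for distinct $z_0, z_0' \in \mathbb F$, pairwise intersections are bounded by Lemma~\ref{res}(a) (second part, for the $q_j$) and Lemma~\ref{res}(c) (for each $r_{j,i}$) to give probability at most $t^{2(m+1)}$, so $\binom{t^{-1}}{2}$ times this is $O(t^{2m}) = O(t^{m+1})$, and higher inclusion-exclusion terms are absorbed by monotonicity of the alternating tail sequence $a_k(t)$ as in Theorem~\ref{1}. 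For $z_0 \in \overline{\mathbb F}\setminus \mathbb F$ with $g_{z_0} = g \geq 2$, Remark~\ref{z0} bounds the number of such $z_0$ by $O(t^g)$ while each vanishing condition on $q_j$ contributes $t^g$ (Lemma~\ref{res}(a)) and each equality contributes at most $t$ (Lemma~\ref{res}(b)), so the total for fixed $g$ is $O(t^{3g+m-1})$, which for $g \geq 2$ is $O(t^{m+5}) = O(t^{m+1})$.

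The main obstacle will be Step~1: the non-simple-form bound requires tracking how the iterated row operations of Lemma~\ref{it}(b) fix certain entries of $[D_1\,|\,D_2]$ and then counting via Lemma~\ref{res} the remaining freedom in the Hermite-form parameters of $D_1$ and $D_2$. Once this is in place, the simple-form computation is a short direct evaluation and the inclusion-exclusion and non-rational-$z_0$ estimates are structurally identical to those in Theorem~\ref{1}.
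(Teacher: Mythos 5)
Your overall route is the same as the paper's: pass to the complementary event, split off the non-simple-form strata, observe that in simple form the rank drop at $z_0$ is exactly $q_1(z_0)=q_2(z_0)=0$ together with $r_1(z_0)=r_2(z_0)$, compute $t^{m+1}$ per rational point and $t^m$ after summing over $\mathbb F$, and control overlaps and points with $g_{z_0}\geq 2$ as at the end of Theorem~\ref{1}. The simple-form characterization, the leading-term count, and the error estimates are all correct and match the paper (your inclusion--exclusion for distinct $z_0,z_0'$ is in fact spelled out more carefully than in the paper's own proof of this theorem).

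The one point that would fail if executed as literally written is the reduction in your Step~1. You propose to multiply ``$\Pr(\text{not simple form})=O(t)$'' by ``$\Pr(\text{rank drop at some }z_0)=O(t^m)$'', but the conditional probability of a rank drop given a non-simple stratum $X_\kappa$ is \emph{not} uniformly $O(t^m)$. If the iterated row operations of Lemma~\ref{it}(b) terminate at index $k<m$, the resulting conditions are $d^{(1)}_{kk}(z_0)=d^{(2)}_{kk}(z_0)=0$ plus $k-1$ entries fixed at $z_0$, which conditional on the stratum costs only $O(t^{g+k-1})$ after summing over $z_0$; for $k=1$, $g=1$ this is $O(t)$, far larger than $t^m$. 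What rescues the bound is that these conditions force $\kappa^{(i)}_{m-k+1}\geq 1$ for $i=1,2$, so by \eqref{codim} the stratum itself has relative measure $O(t^{2(m-k)})$, and the \emph{joint} count $O(t^{2m-k+g-1})$ is $O(t^{m+1})$ for every $k\leq m-1$; only $k=m$, $g=1$ contributes at order $t^m$, and for that case the non-simpleness of $\kappa$ supplies an independent extra factor $t$. So you need the paper's bookkeeping over the stopping index $k$ rather than a product of two marginal probabilities. (A minor inherited slip: the number of $z_0$ with $g_{z_0}=g$ is $O(t^{-g})$, not $O(t^{g})$, so your bound for $g\geq 2$ is really $O(t^{g+m-1})=O(t^{m+1})$ --- still sufficient.)
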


\begin{proof}\ \\
Since the statement is already known for $m=1$ (see Lemma \ref{2coprime}), in the following, it is assumed that $m\geq 2$.
Again we consider the complementary set and show that the cardinality of $S\subset X:=X(n_1,n_2)$ of matrices for which $\mathcal{D}_2$ is not left prime is $O(|X|\cdot t^m)$ and that the cardinality of the subset of $S$ for which $\mathcal{D}_2$ is not of simple form is $O(|X|\cdot t^{m+1})$.\\
Denote the entries of $\mathcal{D}_2$ by $\mathfrak{d}_{ij}$ for $i=1,\hdots,m$ and $j=1,\hdots, 2m$ and choose $z_{\ast}\in\overline{\mathbb F}$ such that $\mathcal{D}_2(z_{\ast})$ is not of full row rank. As in the method of iterated row operations (see Lemma \ref{it} (b)), start considering the first row of this matrix. Either it is identically zero, which means $d_{11}^{(1)}(z_{\ast})=d_{11}^{(2)}(z_{\ast})=0$, or one could assume without restriction that $d_{11}^{(2)}(z_{\ast})\neq 0$. For the first case, one has a cardinality of $|X|\cdot t$ due to the fact that the two polynomials have a common zero. Moreover, they cannot be constant, i.e. $\kappa_m^{(i)}\geq 1$ for $i=1,2$. Thus, it follows from \eqref{codim} that one has an additional factor for the cardinality of at most $t^{2(m-1)}$, which in summary, leads to a cardinality of $O(|X|\cdot t^{2m-1})=O(|X|\cdot t^{m+1})$ for $m\geq 2$ and one is finished. If $d_{11}^{(2)}(z_{\ast})\neq 0$, one proceeds as in the method of iterated row operations, i.e. in the first step, one subtracts multiples of the first row to the rows further down in such way that all entries in column $m+1$ but $d_{11}^{(2)}(z_{\ast})$ are nullified.
From Lemma \ref{it} (b), one knows that there exist $k\in\{1,\hdots,m\}$, a set of column indices $\{j_1,\hdots,j_{k-1}\}\subset\{1,\hdots, 2m\}$ and values $\lambda_{r}\in\mathbb F(z_{\ast})$, which (only) depend on entries $\mathfrak{d}_{ij}$ of $\mathcal{D}_2$ with $j\in\{j_1,\hdots,j_{k-1}\}$ and on $z_{\ast}$, such that
\begin{align}\label{lambdaa}
\mathfrak{d}_{kj}(z_{\ast})=\sum_{r=1}^{k-1}\mathfrak{d}_{rj}(z_{\ast})\cdot\lambda_{r}\qquad\text{for}\ \  j\in\{1,\hdots,2m\}\setminus\{j_1,\hdots,j_{k-1}\}. 
\end{align}
Moreover, since $D_1$ and $D_2$ are lower triangular, it holds $\mathfrak{d}_{ij}\equiv 0$ for $i<j\leq m$ or $i+m<j\leq 2m$. Therefore, \eqref{lambdaa} is equivalent to 
\begin{align}\label{lambdab}
\mathfrak{d}_{kj}(z_{\ast})=\sum_{r=1}^{k-1}\mathfrak{d}_{rj}(z_{\ast})\cdot\lambda_{r}\ \ \text{for}\ j\in\{1,\hdots,k,m+1,\hdots,m+k\}\setminus\{j_1,\hdots,j_{k-1}\}. 
\end{align}
Note that $\{j_1,\hdots,j_{k-1}\}$ is a subset of $\{1,\hdots,k-1,m+1,\hdots,m+k-1\}$ because $\mathfrak{d}_{i,j_i}(z_{\ast})\neq 0$ for $1\leq i\leq k-1$ per construction and hence, $j_i\leq i\leq k-1$ or $m+k-1\geq m+i\geq j_i>m$.
Furthermore, it follows that $d_{kk}^{(1)}(z_{\ast})=\mathfrak{d}_{kk}(z_{\ast})=\sum_{r=1}^{k-1}\mathfrak{d}_{rk}(z_{\ast})\cdot\lambda_{r}=0$ and $d_{kk}^{(2)}(z_{\ast})=\mathfrak{d}_{k,m+k}(z_{\ast})=\sum_{r=1}^{k-1}\mathfrak{d}_{r,m+k}(z_{\ast})\cdot\lambda_{r}=0$.
This could also be seen directly by observing that the iteration process only changes entries beyond the diagonals of the matrices $D_1$ and $D_2$.\\
Thus, one has the conditions $d_{kk}^{(1)}(z_{\ast})=d_{kk}^{(2)}(z_{\ast})=0$, which moreover, ensure $\kappa_{m-k+1}^{(i)}\geq 1$ for $i=1,2$. In particular, this implies that the polynomials $d_{kj}^{(i)}$ for $j<k$ and $i=1,2$ are not fixed to zero by degree restrictions. But since $d_{kj}^{(1)}=\mathfrak{d}_{kj}$ and $d_{kj}^{(2)}=\mathfrak{d}_{k,j+m}$ for $j=1,\hdots,k-1$, one knows from \eqref{lambdab} that $2(k-1)-(k-1)=k-1$ of these polynomials are fixed at $z_{\ast}$ by the remaining polynomials of $\mathcal{D}_2$. We fix $g:=g_{z_{\ast}}$ and apply Lemma \ref{res} (a) and (b) with $\tilde{w}\equiv 1$ and $w_j:=\sum_{r=1}^{k-1}\mathfrak{d}_{rj}\cdot\lambda_r$ for $j\in\{1,\hdots,k-1,m+1,\hdots,m+k-1\}\setminus\{j_1,\hdots,j_{k-1}\}$. One obtains a cardinality that is $O(|X|\cdot\varphi_g\cdot t^{2g+k-1})=O(|X|\cdot t^{g+k-1})$ for the above conditions. Additionally, one gets the factor $t^{2(m-k)}$ from \eqref{codim} since $\kappa_{m-k+1}^{(i)}\geq 1$. In summary, the cardinality is $O(|X|\cdot t^{2m-k+g-1})=O(|X|\cdot t^{m+1})$ for $k\leq m-1$. For $i=m$, one has a factor of $O(|X|\cdot t^m)$. If there is no simple form, it follows from \eqref{codim} that this cardinality is decreased by a factor of at most $t$. Hence, the overall cardinality is $O(|X|\cdot t^{m+1})$. This shows the claim of the first paragraph of this proof for the case that $g$ is fixed. But since $g$ is bounded above by $\min(n_1,n_2)$, it is also valid for summing over all possible values for $g$. Moreover, note that the considered cardinality is $O(|X|\cdot t^{m+g-1})=O(|X|\cdot t^{m+1})$ for $g\geq 2$, even for simple form.\\
It remains to compute the coefficient of $t^m$. It follows from the previous paragraph that for this computation it is sufficient to consider only matrices of the form
$$D_j=\left[\begin{array}{cc}
I_{m-1} & 0 \\ 
d_1^{(j)}\cdots d_{m-1}^{(j)} & d_m^{(j)}
\end{array}\right] $$
for $j=1,2$ for which there exists $z_{\ast}\in\mathbb F$ such that $\left[\begin{array}{cc}
D_1(z_{\ast}) & D_2(z_{\ast}) \end{array}\right]$ is singular, which is the case if and only if $d_m^{(1)}(z_{\ast})=d_m^{(2)}(z_{\ast})=0$ and $d_k^{(1)}(z_{\ast})=d_k^{(2)}(z_{\ast})$ for $1\leq k\leq m-1$. 
According to Lemma \ref{res} (a) and (b) with $\tilde{w}\equiv 1$, $w_j=d_j^{(2)}$ for $j=1,\hdots m-1$ and $g=1$, the probability for this is equal to $\varphi_1\cdot t^{2+m-1}=t^{m}$. Hence, the proof of the theorem is complete.
\end{proof}

In the following, we want to extend the previous result to $N\geq 3$ matrices. But before we approach our actual goal, which is to compute the probability that $N$ matrices are mutually left coprime, we first consider the case of pairwise left coprimeness, which could be deduced from the case $N=2$, where pairwise and mutual left coprimeness coincide.

\begin{theorem}\label{pairwise}\ \\
For $m\geq 1$, the probability of $N$ matrices $D_i\in\mathbb F[z]^{m\times m}$ in Hermite form with $\deg(\det(D_i))=n_i$ for $i=1,\ldots,N$ to be pairwisely left coprime is equal to 
$$1-\frac{N(N-1)}{2}\cdot t^m+O(t^{m+1}).$$
\end{theorem}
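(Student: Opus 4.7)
The plan is to run inclusion--exclusion (Lemma~\ref{ine}) on the complementary events and truncate at second order (Bonferroni). For $1\leq i<j\leq N$ let $A_{ij}$ denote the event that $D_i$ and $D_j$ fail to be left coprime; pairwise left coprimeness is the complement of $\bigcup_{i<j}A_{ij}$. Theorem~\ref{2} gives $\operatorname{Pr}(A_{ij})=t^{m}+O(t^{m+1})$ for each of the $\binom{N}{2}=\tfrac{N(N-1)}{2}$ pairs, so the union bound already yields $\operatorname{Pr}\bigl(\bigcup A_{ij}\bigr)\leq \tfrac{N(N-1)}{2}t^{m}+O(t^{m+1})$. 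The matching lower bound is the two-term Bonferroni inequality
$$\operatorname{Pr}\Bigl(\bigcup_{i<j}A_{ij}\Bigr)\;\geq\;\sum_{i<j}\operatorname{Pr}(A_{ij})\;-\;\sum_{\{i,j\}\neq\{k,l\}}\operatorname{Pr}(A_{ij}\cap A_{kl}),$$
so the entire problem reduces to proving $\operatorname{Pr}(A_{ij}\cap A_{kl})=O(t^{m+1})$ for every two distinct index pairs (there are $O(1)$ many, since $N$ is fixed).

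If $\{i,j\}\cap\{k,l\}=\emptyset$ the events $A_{ij}$ and $A_{kl}$ depend on disjoint tuples of matrices, which are sampled independently, hence $\operatorname{Pr}(A_{ij}\cap A_{kl})=\operatorname{Pr}(A_{ij})\operatorname{Pr}(A_{kl})=O(t^{2m})$, and $2m\geq m+1$ because $m\geq 1$. The shared-index case, say $\{i,j\}\cap\{k,l\}=\{i\}$, is the main obstacle, since $A_{ij}$ and $A_{il}$ both constrain $D_i$ and cannot be decoupled by independence.

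I would handle the shared-index case by recycling the stratification used in the proof of Theorem~\ref{2}. That proof demonstrates, for a single $A_{ij}$, that contributions arising either from $\mathcal{D}_2$ not being in simple form (via the codimension estimate~\eqref{codim}) or from a rank-deficiency witness $z_{\ast}\in\overline{\mathbb F}\setminus\mathbb F$ (via Remark~\ref{z0}) are individually $O(t^{m+1})$. Since $A_{ij}\cap A_{il}\subseteq A_{ij}$ and $\subseteq A_{il}$, these strata are a fortiori $O(t^{m+1})$ for the intersection. What remains is the case where $D_i,D_j,D_l$ are all of simple form and both witnesses $z_1$ (for $A_{ij}$) and $z_2$ (for $A_{il}$) lie in $\mathbb F$. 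In simple form the bad event at a field point $z_{\ast}$ reads $d_m^{(i)}(z_{\ast})=d_m^{(j)}(z_{\ast})=0$ together with $d_k^{(i)}(z_{\ast})=d_k^{(j)}(z_{\ast})$ for $1\leq k<m$; summing Lemma~\ref{res}(a)--(c) entrywise, the branch $z_1=z_2$ produces $t^{2m+1}$ per coinciding witness times $t^{-1}$ witnesses, and the branch $z_1\neq z_2$ produces $t^{2m+2}$ per ordered pair times at most $t^{-2}$ pairs. Both branches deliver $O(t^{2m})\subseteq O(t^{m+1})$, completing the bound on the intersection.

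Combining the pieces gives $\operatorname{Pr}\bigl(\bigcup_{i<j}A_{ij}\bigr)=\tfrac{N(N-1)}{2}t^{m}+O(t^{m+1})$, and the theorem follows by complementation. The genuine technical step is the shared-index sub-case; everything else is Bonferroni bookkeeping on top of Theorem~\ref{2}.
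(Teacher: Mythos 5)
Your proposal is correct and follows essentially the same route as the paper: both reduce the first-order term to Theorem \ref{2} applied to each of the $\binom{N}{2}$ pairs, and both establish that the simultaneous failure of two distinct pairs has probability $O(t^{m+1})$ by splitting into the disjoint-index case (independence) and the shared-index case, the latter handled after restricting to simple form and witnesses in $\mathbb F$, with the same exponent counts ($t^{-1+3+2(m-1)}$ and $t^{-2+4+2(m-1)}$) for coinciding and distinct witnesses. The only difference is presentational: the paper runs inclusion--exclusion over the refined index set $\overline{\mathbb F}\times\mathcal{E}$, whereas you apply a two-term Bonferroni bound directly to the pair-level events $A_{ij}$.
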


\begin{proof}\ \\
Let $S$ be the subset of $X:=X(n_1,\hdots,n_N)$ (see Definition \ref{simple}) for which the tuples consist of pairwisely left coprime matrices and $\mathcal{E}:=\{ij\ |\ 1\leq i<j\leq N\}$. Thus, $S=X\setminus\bigcup_{r\in R}S_r$ with $R=\overline{\mathbb F}\times \mathcal{E}$ and $S_{(z_{\ast},ij)}=\{(D_1,\hdots,D_N)\subset X\ |\ \left[\begin{array}{cc}
D_i(z_{\ast}) & D_j(z_{\ast}) \end{array}\right]\ \text{is singular}\}$.
By the inclusion-exclusion principle, one obtains:
$$|S|=\sum_{T\subset R}(-1)^{|T|}|S_T|\quad \text{with}\quad S_T=\bigcap_{r\in T}S_r\quad\text{and}\quad S_{\emptyset}=X.$$
From Theorem \ref{2}, it follows that the probability $\frac{|S|}{|X|}$ is equal to $1+O(t^m)$. Moreover, from the proof of this theorem, it follows that for the computation of the coefficient of $t^m$, it is sufficient to consider only matrices of the form
$$D_j=\left[\begin{array}{cc}
I_{m-1} & 0 \\ 
d_1^{(j)}\cdots d_{m-1}^{(j)} & d_m^{(j)}
\end{array}\right] $$
for $j=1,\hdots,N$ for which there exist $z_{\ast}\in\mathbb F$ and $1\leq i<j\leq N$ such that $\left[\begin{array}{cc}
D_i(z_{\ast}) & D_j(z_{\ast}) \end{array}\right]$ is singular. Recall that $\left[\begin{array}{cc}
D_i(z_{\ast}) & D_j(z_{\ast}) \end{array}\right]$ is singular if and only if $d_m^{(i)}(z_{\ast})=d_m^{(j)}(z_{\ast})=0$ and $d_k^{(i)}(z_{\ast})=d_k^{(j)}(z_{\ast})$ for $1\leq k\leq m-1$.\\ 
For the case that there exist $\tilde{z}_{\ast}\in\mathbb F$ and $1\leq u<v\leq N$ with $(u,v)\neq (i,j)$ such that 
$\left[\begin{array}{cc}
D_u(\tilde{z}_{\ast}) & D_v(\tilde{z}_{\ast}) \end{array}\right]$ is singular, too, it is obvious that the probability is $O(t^{2m})=O(t^{m+1})$ if $\{i,j\}\cap\{u,v\}=\emptyset$. 
If  $\{i,j\}\cap\{u,v\}\neq\emptyset$, assume without restriction that $j=u$. Then, one could choose $d_1^{(j)},\hdots,d_{m-1}^{(j)}$ as well as $z_{\ast}, \tilde{z}_{\ast}\in\mathbb F$ arbitrarily, which affects that $d_1^{(i)},\hdots,d_{m-1}^{(i)}$ are fixed at $z_{\ast}$ and $d_1^{(v)},\hdots,d_{m-1}^{(v)}$ are fixed at $\tilde{z}_{\ast}$. If $\tilde{z}_{\ast}=z_{\ast}$, it follows from Lemma \ref{res} (a) and (b) that the probability is $t^{-1+3+2(m-1)}=O(t^{m+1})$. If $\tilde{z}_{\ast}\neq z_{\ast}$, for which there are $O(t^{-2})$ possibilities, Lemma \ref{res} (a) and (b) lead to a probability of $O(t^{-2+4+2(m-1)})=O(t^{m+1})$.\\
Thus, in all these cases, the probability is $O(t^{m+1})$, which means that they are not relevant for the coefficient of $t^m$. Consequently, only $T\subset R$ of the form $T=\{(z_{\ast},ij)\}$ with $z_{\ast}\in\mathbb F$ give a contribution to the coefficient of $t^m$, namely $|S_T|=|X|\cdot t^m$ (see end of proof for Theorem \ref{2}). This leads to
\begin{align*}
\frac{|S|}{|X|}&=1-\sum_{z_{\ast}\in\mathbb F,ij\in\mathcal{E}}\frac{|S_{(z_{\ast},ij)}|}{|X|}+O(t^{m+1})=1-\sum_{ij\in\mathcal{E}}t^m+O(t^{m+1})=\\
&=1-\frac{N(N-1)}{2}t^m+O(t^{m+1}).
\end{align*}
\end{proof}

\begin{remark}\ \\
For $m\geq 2$, $N\geq 3$ mutual left coprimeness is a stronger condition than pairwise left coprimeness, as the following example shows.
\end{remark}

\begin{example}\ \\
Consider the pairwisely left coprime matrices in Hermite form $D_1(z)=\left[\begin{array}{cc} 1 & 0 \\ 1 & z \end{array}\right]$, $D_2(z)=\left[\begin{array}{cc}1 & 0 \\ 0 & z\end{array}\right]$ and
$D_3(z)=\left[\begin{array}{cc}z & 0 \\ 0 & 1\end{array}\right]$. We show in two different ways that they are, however, not mutually left coprime. One way to see that is to consider
$$\left[\begin{array}{ccc} D_1(z) & D_2(z) & 0 \\ 0 & D_2(z) & D_3(z) \end{array}\right]= \left[\begin{array}{cccccc}
1 & 0 & 1 & 0 & 0 & 0 \\ 
1 & z & 0 & z & 0 & 0 \\ 
0 & 0 & 1 & 0 & z & 0 \\ 
0 & 0 & 0 & z & 0 & 1
\end{array}\right]. 
$$
Since this matrix is singular for $z=0$, $D_1$, $D_2$ and $D_3$ are not mutually left coprime. A second way to show this is to compute a least common right multiple of e.g. $D_2$ and $D_3$, denoted by $D_{23}$. It is easy to see that one can choose $D_{23}(z)=\left[\begin{array}{cc}z & 0 \\ 0 & z\end{array}\right]$, which is clearly not left coprime with $D_1(z)$. The following lemma shows in particular that for $N=3$ only matrices whose determinants have a common zero (here $z=0$) could be pairwisely but not mutually left coprime.
\end{example}
\begin{lemma}\label{det}\ \\
Let $D_1,\hdots,D_N$ be not mutually left coprime with $\xi\mathcal{D}_N(z_{\ast})=0$ for some $z_{\ast}\in\overline{\mathbb F}$ and $\xi\in\mathbb F(z_{\ast})^{m(N-1)}$. Moreover, every set consisting of $N-1$ of these matrices should be mutually left coprime. Then, it holds $\xi\in(\mathbb F(z_{\ast})\setminus\{0\})^{m(N-1)}$ and $\det(D_i(z_{\ast}))=0$ for $i=1,\hdots,N$.
\end{lemma}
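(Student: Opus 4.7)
The plan is to decompose $\xi=(\xi_1,\ldots,\xi_{N-1})$ into blocks of length $m$ matching the block-rows of $\mathcal{D}_N$, and to read $\xi\mathcal{D}_N(z_\ast)=0$ column by column. This yields the system
\[
\xi_1 D_1(z_\ast)=0,\qquad (\xi_{j-1}+\xi_j)D_j(z_\ast)=0\text{ for }2\le j\le N-1,\qquad \xi_{N-1}D_N(z_\ast)=0.
\]
Both conclusions of the lemma will follow by translating the hypothesis (every $(N-1)$-subset is mutually left coprime) via Theorem \ref{mutcrit} into full-row-rank statements on appropriate block-bidiagonal matrices. I also need a short side observation that subsets of a mutually left coprime family are again mutually left coprime: a left null-vector of the smaller block-bidiagonal matrix extends by zero-padding to a left null-vector of the larger one.

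I first show that every block $\xi_i$ is nonzero. If $\xi_i=0$, the column equations decouple into two independent subsystems: $(\xi_1,\ldots,\xi_{i-1})$ becomes a left null-vector for the block-bidiagonal matrix associated to $\{D_1,\ldots,D_i\}$, and $(\xi_{i+1},\ldots,\xi_{N-1})$ becomes one for $\{D_{i+1},\ldots,D_N\}$. Both sets are subsets of mutually left coprime $(N-1)$-families (obtained by removing $D_N$, respectively $D_1$), so the side observation shows both associated matrices are left prime, forcing both halves to vanish and contradicting $\xi\ne 0$. The first and last column equations, combined with $\xi_1,\xi_{N-1}\ne 0$, immediately yield $\det D_1(z_\ast)=\det D_N(z_\ast)=0$.

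For $2\le j\le N-1$, the only way to avoid $\det D_j(z_\ast)=0$ is to have $\xi_j=-\xi_{j-1}$. To rule this out, I introduce the candidate
\[
\xi':=(\xi_1,\ldots,\xi_{j-1},\,-\xi_{j+1},\,-\xi_{j+2},\,\ldots,\,-\xi_{N-1})
\]
of length $m(N-2)$ and verify that it lies in the left kernel of $\mathcal{D}^{(j)}(z_\ast)$, the block-bidiagonal matrix for $\{D_1,\ldots,\widehat{D_j},\ldots,D_N\}$. The sign flip on the right half is chosen precisely so that the ``bridging'' column equation becomes $(\xi_{j-1}-\xi_{j+1})D_{j+1}(z_\ast)=-(\xi_j+\xi_{j+1})D_{j+1}(z_\ast)=0$, which is an original column equation of $\mathcal{D}_N$; the later columns then telescope to $-(\xi_c+\xi_{c+1})D_{c+1}(z_\ast)=0$, and the final column reduces to $-\xi_{N-1}D_N(z_\ast)=0$. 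By hypothesis $\mathcal{D}^{(j)}$ is left prime, so $\xi'=0$, forcing $\xi_1=0$ in contradiction with the first step.

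The main obstacle is guessing the correct sign pattern for $\xi'$: without the flip on the right half the bridging column equation does not close up. Everything else reduces to routine verification of the block equations and two appeals to Theorem \ref{mutcrit}.
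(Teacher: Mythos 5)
Your proposal is correct and follows essentially the same route as the paper's proof: reading $\xi\mathcal{D}_N(z_\ast)=0$ blockwise, showing each block $\xi_i$ is nonzero by splitting/zero-padding into left null-vectors of the bidiagonal matrices of smaller subfamilies, and ruling out $\xi_{j-1}+\xi_j=0$ via exactly the sign-flipped vector $(\xi_1,\ldots,\xi_{j-1},-\xi_{j+1},\ldots,-\xi_{N-1})$ that the paper calls $\hat\xi$. The only (harmless) difference is that you make explicit the auxiliary fact that subfamilies of mutually left coprime families are mutually left coprime, which the paper uses implicitly.
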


\begin{proof}\ \\
According to Theorem \ref{mutcrit}, $D_1,\hdots,D_N$ are mutually left coprime if and only if\\ $\mathcal{D}_N:=\left[\begin{array}{cccc}
D_1 & D_2 & 0 & 0 \\ 
0 & \ddots & \ddots & 0 \\ 
0 & 0 & D_{N-1} & D_N
\end{array}\right]$
is left prime.\\
Since this is not true, there exist $z_{\ast}\in\overline{\mathbb F}$ and $\xi:=(\xi_1,\hdots,\xi_{N-1})\neq 0$ with $\xi_i\in\mathbb F(z_{\ast})^{1\times m}$ for $i=1,\hdots,N-1$ such that $\xi\mathcal{D}_N(z_{\ast})=0$, i.e.\\ $\xi_1D_1(z_{\ast})=0$, $(\xi_{i-1}+\xi_i)D_i(z_{\ast})=0$ for $i=2,\hdots,N-1$ and $\xi_{N-1}D_N(z_{\ast})=0$. Consequently, one has to show\\
$\xi_1\neq 0$, $\xi_{i-1}+\xi_i\neq 0$ for $i=2,\hdots,N-1$ and $\xi_{N-1}\neq 0$\\
if every proper subset of $\{D_1,\hdots, D_N\}$ consists of mutually left coprime matrices. This is shown per contradiction.\\
If $\xi_{N-1}=0$, i.e. $\tilde{\xi}:=(\xi_1,\hdots,\xi_{N-2})\neq 0$, it follows $\tilde{\xi}\mathcal{D}_{N-1}(z_{\ast})=0$, i.e. $D_1,\hdots,D_{N-1}$ would not be mutually left coprime. Similarly, if $\xi_1=0$,\\
$D_2,\hdots,D_{N}$ would not be mutually left coprime. To show $\xi_{i-1}+\xi_i\neq 0$ for $i=2,\hdots,N-1$, one needs $\xi_i\neq 0$ for $i=2,\hdots,N-2$. If $\xi_k=0$ for some $k=2,\hdots,N-2$, it follows $(\xi_1,\hdots,\xi_{k-1})\neq 0$ or $(\xi_{k+1},\hdots,\xi_{N-1})\neq 0$. In the first case, $D_1,\hdots,D_k$ would not be mutually left coprime, in the second case, $D_{k+1},\hdots,D_N$ would not be mutually left coprime. Now, assume $\xi_{k-1}+\xi_k=0$ for some $k=2,\hdots,N-1$. Define $\hat{\xi}:=(\hat{\xi}_1,\hdots,\hat{\xi}_{N-2})$ with $\hat{\xi}_i=\xi_i$ for $i\leq k-1$ and $\hat{\xi_i}=-\xi_{i+1}$ for $i\geq k$. Then $\hat{\xi}\neq 0$ and $\hat{\xi}_1D_1(z_{\ast})=0$, $(\hat{\xi}_{i-1}+\hat{\xi}_{i})D_{i}(z_{\ast})=0$ for $i=2,\hdots,k-1$, $(\hat{\xi}_{i-1}+\hat{\xi}_{i})(-D_{i+1}(z_{\ast}))=(\xi_{i}+\xi_{i+1})D_{i+1}(z_{\ast})=0$ for $i=k,\hdots, N-2$ and $\hat{\xi}_{N-2}(-D_N(z_{\ast}))=\xi_{N-1}D_N(z_{\ast})=0$. This means that $D_1,\hdots,D_{k-1},-D_{k+1},\hdots,-D_{N}$ are not mutually left coprime. But then $D_1,\hdots,D_{k-1},D_{k+1},\hdots,D_{N}$ are not mutually left coprime, too. 
Consequently, the proof is complete.
\end{proof}

Next, we prove a recursion formula for the probability of mutual left coprimeness, which will be crucial for the proof of Theorem \ref{mut}.

\begin{theorem}\label{rec}\ \\
For $N\geq 2$, the probability that $N$ matrices $D_i\in\mathbb F[z]^{m\times m}$ in Hermite form with $\deg(\det(D_i))=n_i$ for $i=1,\dots, N$ are mutually left coprime is equal to
$$P_m(N)=1+\sum_{k=1}^{N-2}(-1)^k\binom{N}{k}(1-P_m(N-k))-\sum_{i=N-1}^{\min(m,N-1)}t^m+O(t^{m+1}),$$
where $P_m(N-k)$ denotes the probability that $N-k$ such matrices are mutually left coprime.
\end{theorem}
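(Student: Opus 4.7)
The plan is to decompose the failure event into "boundary'' failures, where some proper subtuple is already not mutually left coprime, and "interior'' failures, where the full $N$-tuple fails while every proper subtuple remains mutually left coprime. Throughout I work with left primeness of $\mathcal{D}_N$ via Theorem \ref{mutcrit}. Let $F \subseteq X(n_1,\dots,n_N)$ denote the event that the tuple is not mutually left coprime, and for each $i \in \{1,\dots,N\}$ let $A_i \subseteq F$ be the event that removing $D_i$ still yields a non-mutually-left-coprime subtuple. Here $A_i \subseteq F$ because any left-kernel vector of the smaller $(N-1)$-block matrix, padded with a zero block, provides a left-kernel vector of $\mathcal{D}_N$ at the same point. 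Setting $B := F \setminus \bigcup_i A_i$ gives the disjoint decomposition $F = B \sqcup \bigcup_i A_i$, and therefore
\[
1 - P_m(N) = \Pr(B) + \Pr\Bigl(\bigcup_{i=1}^{N} A_i\Bigr).
\]

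First I would address $\Pr(\bigcup_i A_i)$ by inclusion-exclusion. Fix $T \subseteq \{1,\dots,N\}$ with $|T|=k$ and complement $S$, and put $A_T := \bigcap_{i\in T} A_i$. The zero-padding argument gives the inclusion $\{S\text{-subtuple not mutually left coprime}\} \subseteq A_T$, contributing $(1-P_m(N-k))$ to $\Pr(A_T)$ whenever $|S|=N-k\geq 2$. The complementary part of $A_T$ requires the $S$-subtuple to be mutually left coprime while for each $i \in T$ the $(N-1)$-subtuple missing $D_i$ fails for a reason involving $T\setminus\{i\}$; since a single non-coprime pair $\{a,b\}$ can certify $A_i$ only when $i\notin\{a,b\}$ and since $|T|\geq 2$, at least two independent pair-failures are forced, bounding this part by $O(t^{2m})=O(t^{m+1})$. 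Hence $\Pr(A_T) = (1-P_m(N-k)) + O(t^{m+1})$ for $1\leq k\leq N-2$. For $k=N-1$ and $k=N$ the same obstruction (no single pair-failure covers more than $N-2$ of the $A_i$'s, and no single triple-failure can cover all of $T$ either) gives $\Pr(A_T)=O(t^{m+1})$, so these terms disappear into the error, yielding
\[
\Pr\Bigl(\bigcup_i A_i\Bigr) = \sum_{k=1}^{N-2}(-1)^{k+1}\binom{N}{k}\bigl(1-P_m(N-k)\bigr) + O(t^{m+1}).
\]

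Next I would evaluate $\Pr(B)$ using Lemma \ref{det}: on $B$ there exists a common $z_\ast \in \overline{\mathbb F}$ with $\det D_i(z_\ast)=0$ for every $i$. The leading contribution is from $z_\ast \in \mathbb F$ combined with simple-form Hermite matrices; the non-simple-form suppression factor from \eqref{codim} and the bound on the number of $z_\ast$ with $g_{z_\ast}=g$ from Remark \ref{z0} together push the remaining configurations into $O(t^{m+1})$. In simple form, $\det D_i(z_\ast)=0$ reduces to $d_m^{(i)}(z_\ast)=0$, of probability $t$ per matrix and $t^N$ jointly. Conditional on these vanishings, after stripping the $N$ resulting zero columns of $\mathcal{D}_N(z_\ast)$, a short left-kernel analysis shows that $\mathcal{D}_N(z_\ast)$ is row-deficient precisely when the $N$ vectors $\bigl((d_1^{(i)}(z_\ast),\dots,d_{m-1}^{(i)}(z_\ast)),1\bigr) \in \mathbb F^m$ are linearly dependent. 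For $m\geq N$ this conditional probability is $t^{m-N+1}+O(t^{m-N+2})$; for $m=N-1$ linear dependence is automatic (more vectors than dimension); for $m<N-1$ it is also automatic, but the overall expression $t^{-1}\cdot t^N = t^{N-1}$ already sits in $O(t^{m+1})$. In all three regimes one obtains
\[
\Pr(B) = [m\geq N-1]\cdot t^m + O(t^{m+1}) = \sum_{i=N-1}^{\min(m,N-1)} t^m + O(t^{m+1}).
\]

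Substituting both estimates into the decomposition of $1-P_m(N)$ and rearranging yields the claimed recursion. The hardest parts will be (i) confirming, inside the inclusion-exclusion, that the highly correlated $k\in\{N-1,N\}$ intersections are genuinely of size $O(t^{m+1})$ despite many overlapping pair coincidences at a shared zero; and (ii) pinning down the exact leading coefficient $1$ in $\Pr(B)$, which requires uniform control of the linear-dependence probability for the "affine'' random vectors $(\vec d^{(i)}(z_\ast),1)$ and of the corrections coming from non-simple-form matrices and from $z_\ast$ lying in a proper extension of $\mathbb F$.
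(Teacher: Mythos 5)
Your decomposition is exactly the one the paper uses: your $B$ is the paper's $A_{N+1}(N)$ (tuples that fail while every $(N-1)$-subtuple survives), your $A_i$ are its $A_i(N)$, and the identity $1-P_m(N)=\Pr(B)+\Pr(\bigcup_i A_i)$ followed by inclusion--exclusion over the $A_i$ is precisely how the recursion is derived there. Your evaluation of $\Pr(B)$ also tracks the paper: Lemma \ref{det} forces $\det D_i(z_\ast)=0$ for all $i$, the non-simple-form and $g_{z_\ast}\geq 2$ configurations are pushed into $O(t^{m+1})$ (though be aware that this preliminary reduction is itself a substantial induction in the paper, Claims 1 and 2), and in simple form the singularity of $\mathcal{D}_N(z_\ast)$ given $d_m^{(i)}(z_\ast)=0$ is equivalent to a dependence $\sum_j\mu_jv_j=0$ among the vectors $v_j=(d_1^{(j)}(z_\ast),\dots,d_{m-1}^{(j)}(z_\ast),1)$ with $\mu_1=\xi_1$, $\mu_2=-(\xi_1+\xi_2),\dots$, which is a clean reformulation of the paper's system \eqref{above} and yields the same leading term $[m\geq N-1]\,t^m$. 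You still owe the check that dependencies with some $\mu_j=0$, i.e.\ the set $\tilde{A}_{N+1}(N)\setminus B$, contribute only $O(t^{m+1})$; the paper proves this separately.

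The one step whose stated justification would not survive being written out is the bound on the complementary part of $A_T$. The two forced failures are in general neither pair-failures (a witness for $A_i$ may be an interior failure of a larger sub-subtuple meeting $T\setminus\{i\}$) nor independent: they share every matrix indexed by $S$ and may share the point $z_\ast$. So the bound $O(t^{2m})$ obtained by multiplying probabilities is not available, and in fact the paper only establishes $O(t^{m+1})$ for these intersections, by a different mechanism: for $T=\{1,N\}$, writing $\xi^{(1)},\xi^{(N)}$ for the kernel vectors of the two $(N-1)$-block matrices, the assumption that the shared $S$-subtuple is mutually left coprime forces the extremal components $\xi^{(1)}_{N-2}$ and $\xi^{(N)}_{1}$ to be nonzero, which produces the additional vanishing conditions $d_m^{(N)}(z_\ast^{(1)})=d_m^{(1)}(z_\ast^{(N)})=0$ and hence one extra factor of $t$ beyond the generic $O(t^m)$. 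An induction on $|T|$ then reduces general $A_T$ to this two-set case. This correlated-intersection analysis is the bulk of the paper's proof of the recursion; your sketch correctly flags it as the hard point, but the independence argument you propose in its place is not valid.
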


\begin{proof}\ \\
For $N=2$, the formula has already been proven in Theorem \ref{2}. Therefore, one could assume $N\geq 3$. Let $mut(N)$ be the subset of $X(N):=X(n_1,\hdots,n_N)$ for which the tuples consist of mutually left coprime matrices. Moreover, for $i=1,\hdots,N$, $A_i(N)$ should denote the subset of $X(N)$ for which the matrices in the set $\{D_1,\hdots,D_N\}\setminus \{D_i\}$ are not mutually left coprime. Finally, define $A_{N +1}(N):=(X(N)\setminus mut(N))\cap(X(N)\setminus\bigcup_{i=1}^N A_i(N))$, i.e. $A_{N+1}(N)$ consists of those
tuples that are not mutually left coprime but all subsets of $N-1$ matrices
are mutually left coprime.\\
Thus, $mut(N)=X(N)\setminus\bigcup_{i=1}^{N+1}A_i(N)$.
By the inclusion-exclusion principle, one obtains:
\begin{align*}
&|mut(N)|=\sum_{I\subset\{1,\hdots,N+1\}}(-1)^{|I|}|A_I(N)|\\
& \text{with}\quad  A_I(N)=\bigcap_{i\in I}A_i(N)\quad \text{and}\quad  A_{\emptyset}(N)=X(N).
 \end{align*}
From the definition of $A_i(N)$, it follows $A_{N+1}(N)\cap A_i(N)=\emptyset$ for $i=1,\hdots,N$ and consequently, 
\begin{equation}\label{sieb}
|mut(N)|=|X(N)|-|A_{N+1}(N)|+\sum_{\emptyset\neq I\subset\{1,\hdots,N\}}(-1)^{|I|}|A_I(N)|.
\end{equation}
In the following, it is used that $D_1,\hdots,D_N$ are mutually left coprime if and only if\\ $\mathcal{D}_N:=\left[\begin{array}{cccc}
D_1 & D_2 & 0 & 0 \\ 
0 & \ddots & \ddots & 0 \\ 
0 & 0 & D_{N-1} & D_N
\end{array}\right]$
is left prime; see Theorem \ref{mutcrit}.\\
At first, it is shown that $X(N)\setminus mut(N)$ has a cardinality that is $O(|X(N)|\cdot t^m)$ and that the subset of $X(N)\setminus mut(N)$ which contains only tuples of matrices such that $\mathcal{D}_N$ is not of simple form has a cardinality that is $O(|X(N)|\cdot t^{m+1})$. 
Doing this, one uses the following claim.\\
\textbf{Claim 1:}\\
If $D_i=\left[\begin{array}{cc} v_i & 0 \\ w_i & D^{(m-1)}_i \end{array} \right]$ with $v_i\in\mathbb F[z]$ and $v_i(z_{\ast})\neq 0$ for $i=1,\hdots,N$ and some $z_{\ast}\in\overline{\mathbb F}$, it holds
\begin{align*}
&\operatorname{rk}(\mathcal{D}_N(z_{\ast}))=\\ &=\operatorname{rk}\left(\left[\begin{array}{ccccc}
w_1-\frac{v_1}{v_2}w_2 & D^{(m-1)}_1 & D^{(m-1)}_2 &   &   \\ 
(\frac{w_3}{v_3}-\frac{w_2}{v_2})v_1 &   &  D^{(m-1)}_2 &   D^{(m-1)}_3 &     \\ 
\vdots  & & \ddots & \ddots &  \\ 
(-1)^N(\frac{w_{N-1}}{v_{N-1}}-\frac{w_N}{v_N})v_1 &   &    &  D^{(m-1)}_{N-1}  &  D^{(m-1)}_N
\end{array}\right](z_{\ast})\right)+\\
&+N-1.
\end{align*}
Proof of claim 1:\\
This proof uses a method that is similar to the method of iterated row operations introduced in Lemma \ref{it} (b). However, one applies only one iteration step to special subblocks of $\mathcal{D}_N(z_{\ast})$.\\
One starts adding row $(N-2)m+1$ of $\mathcal{D}_N(z_{\ast})$ times $\frac{-w_{N,r}}{v_N}(z_{\ast})$ to row $(N-2)m+1+r$  for  $r=1,\hdots,m-1$. Here, $w_{N,r}$ denotes the $r$-th component of the vector $w_N$. Afterwards, deleting row $(N-2)m+1$ and column $(N-1)m+1$ decreases the rank of $\mathcal{D}_N(z_{\ast})$ by $1$.
This affects only the block $[D_{N-1}\ D_N](z_{\ast})$, whose first row and $(m+1)$-th column are deleted and whose first column is changed to $(w_{N-1}-\frac{v_{N-1}}{v_N}w_N)(z_{\ast})$. Moreover, some zeros of the zero blocks are deleted. Now, one continues adding multiples of row $(N-3)m+1$ to all rows further down in such way that the entries in these rows which are in column $(N-2)m+1$ are nullified. This additionally changes the entries of these rows that are in column $(N-3)m+1$ but no other entries. Afterwards, row $(N-3)m+1$ and column $(N-2)m+1$ are deleted decreasing the rank by 1. Hence, per induction with respect to $N$, one could assume
\begin{align*}
&\operatorname{rk}(\mathcal{D}_N(z_{\ast}))=\\
&\operatorname{rk}\left(\left[\begin{array}{cccccc}
v_1 & 0 & v_2 & 0 & &\\
w_1 & D^{(m-1)}_1 &   w_2 & D^{(m-1)}_2  & &\\
& & w_2-\frac{v_2}{v_3}w_3 &   D^{(m-1)}_2 &   D^{(m-1)}_3 &     \\ 
& & \vdots & \ddots & \ddots &  \\ 
& & (-1)^{N-1}(\frac{w_{N-1}}{v_{N-1}}-\frac{w_N}{v_N})v_2 &   &  D^{(m-1)}_{N-1}  &  D^{(m-1)}_N
\end{array}\right](z_{\ast})\right)\\
&+N-2.
\end{align*}
Now, one adds the first row to all rows beyond in such way that column $m+1$ is nullified and afterwards, deletes the first row and $(m+1)$-th column. Doing this, one gets 
\begin{align*}
&\operatorname{rk}(\mathcal{D}_N(z_{\ast}))=\\
&\operatorname{rk}\left(\left[\begin{array}{ccccc}
w_1-\frac{v_1}{v_2}w_2 & D^{(m-1)}_1 & D^{(m-1)}_2 &   &   \\ 
(\frac{w_3}{v_3}-\frac{w_2}{v_2})v_1 &   &  D^{(m-1)}_2 &   D^{(m-1)}_3 &     \\ 
\vdots  & & \ddots & \ddots &  \\ 
(-1)^N(\frac{w_{N-1}}{v_{N-1}}-\frac{w_N}{v_N})v_1 &   &    &  D^{(m-1)}_{N-1}  &  D^{(m-1)}_N
\end{array}\right](z_{\ast})\right)\\
&+N-1
\end{align*}
and claim 1 is proven.\\
Next, denote by $\mathcal{D}_N^{(\tilde{m})}$ the matrix formed by blocks which consist of the last $\tilde{m}$ columns and rows of the matrices $D_i$ for $i=1,\hdots,N$ and define the sets
\begin{align*}
A(\tilde{m},k)&:=\{\mathcal{D}_N \text{ with}\ \operatorname{rk}(\mathcal{D}_N^{(\tilde{m})})(z_{\ast})\leq (N-1)\tilde{m}-k\ \text{for some $z_{\ast}\in\overline{\mathbb F}$}\} \ \text{and}\\
A^f(\tilde{m},k)&:=A(\tilde{m},k)\cap\{\mathcal{D}_N\ \text{with no simple form}\}.
\end{align*}
\textbf{Claim 2:}\\
For $\tilde{m},k\in\mathbb N$ with $\tilde{m}+k\leq m+1$, the cardinality of $A(\tilde{m},k)$ is\\
$O(|X(N)|\cdot t^{\tilde{m}+k-1})$ and the cardinality of $A^f(\tilde{m},k)$ is $O(|X(N)|\cdot t^{\tilde{m}+k})$.\\
Proof of claim 2:\\
We proceed per induction with respect to $\tilde{m}$ and start with the base clause $\tilde{m}=1$. For $k>N-1$, it holds $A(1,k)=A^f(1,k)=\emptyset$, which has cardinality $O(|(X(N)|\cdot t^{m(n_1+\cdots+n_N)})=O(|X(N)|\cdot t^{\tilde{m}+k+1})$ since $m(n_1+\cdots+n_N)\geq mN\geq (\tilde{m}+k-1)\cdot 2=2k\geq k+2= \tilde{m}+k+1$ because $k\geq N\geq 2$. Thus, it is sufficient to consider the case $k\leq N-1$. The blocks that form $\mathcal{D}_N^{(1)}$ are just the scalar polynomials $d^{(i)}:=d_{m,m}^{(i)}$ for $i=1,\hdots,N$. In $A(1,k)$, there exists $z_{\ast}\in\overline{\mathbb F}$ such that all matrices consisting of $N-k$ rows of $\mathcal{D}_N^{(1)}$ are not of full rank at $z_{\ast}$. Especially, the first $N-k$ rows are linearly dependent at $z_{\ast}$, which is equivalent to the fact that at least two of the polynomials $d^{(1)},\hdots,d^{(N-k+1)}$ are zero at $z_{\ast}$. Since permutating the set $\{d^{(1)},\hdots,d^{(N)}\}$ does not change the rank of $\mathcal{D}_N^{(1)}$, every subset of $N-k+1$ polynomials contains two polynomials that are zero at $z_{\ast}$.\\
Now, one proceeds in the following way: First, choose the polynomials\\
$d^{(1)},\hdots,d^{(N-k+1)}$ and denote the polynomials that are zero at $z_{\ast}$ by $d_{1,1}$ and $d_{1,2}$. Then, consider the set of polynomials $\{d^{(1)},\hdots,d^{(N-k+2)}\}\setminus\{d_{1,1}\}$ and iterate this procedure until ending up with the set\\
$\{d^{(1)},\hdots,d^{(N)}\}\setminus\{d_{1,1},\hdots, d_{k-1,1}\}$. In summary, at least the $k+1$ different polynomials $d_{1,1},\hdots, d_{k-1,1},d_{k,1},d_{k,2}$ are zero at $z_{\ast}$. Hence, the cardinality of $A(1,k)$ is $O(|X(N)|\cdot t^{1+k-1})$ (see Lemma \ref{2coprime}).
Moreover, the cardinality of $A^f(1,k)$ is $O(|X(N)|\cdot t^{k+1})$ since the not simple form decreases the cardinality by at least the factor $t$; see \eqref{codim}.\\
For the step from $\tilde{m}$ to $\tilde{m}+1$, three cases are distinguished.\\
Case 1: $k>(N-1)(\tilde{m}+1)$\\
Here, $A(\tilde{m}+1,k)=A^f(\tilde{m}+1,k)=\emptyset$, which has a cardinality that is $O(|X(N)|\cdot t^{m(n_1+\cdots+n_N)})=O(|X(N)|\cdot t^{\tilde{m}+k+1})$ since $m(n_1+\cdots+n_N)\geq mN\geq 2m\geq m+1\geq \tilde{m}+k+1$.\vspace{2mm}\\
Case 2: $k=(N-1)(\tilde{m}+1)$\\
This means $\operatorname{rk}(\mathcal{D}^{(\tilde{m}+1)}_N(z_{\ast}))=0$, i.e. $\mathcal{D}^{(\tilde{m}+1)}_N(z_{\ast})\equiv 0$. In particular, all diagonal elements are identically zero, which implies $\kappa_j^{(i)}\geq 1$ for $j=1,\hdots,\tilde{m}+1$ and $i=1,\hdots N$. Consequently, according to Lemma \ref{2coprime} and equation \eqref{codim}, the corresponding cardinality is $O(|X(N)|\cdot t^{N(\tilde{m}+1)-1+N\tilde{m}})=O(|X(N)|\cdot t^{k+\tilde{m}+1})$.
Case 3: $k\leq (N-1)(\tilde{m}+1)-1\Leftrightarrow \tilde{m}(N-1)\geq k+2-N$\\
Case 3 is divided into three subcases.\\
Case 3.1: $d_{m-\tilde{m}, m-\tilde{m}}^{(1)}(z_{\ast})=\cdots=d_{m-\tilde{m},m-\tilde{m}}^{(N)}(z_{\ast})=0$\\
That these polynomials have a common zero contributes a factor of $O(t^{N-1})$ to the cardinality. Additionally, these polynomials cannot be identically $1$, which implies $1\leq \kappa^{(i)}_{m-(m-\tilde{m})+1}=\kappa^{(i)}_{\tilde{m}+1}$ (in particular, one has no simple form) and contributes a factor of $O(t^{N\tilde{m}})$ to the cardinality; see \eqref{codim}. In summary, the cardinality is $O(|X(N)|\cdot t^{\tilde{m}+1+k})$ since $N-1+N\tilde{m}\geq N-1+\tilde{m}+k+2-N=\tilde{m}+1+k$.\\
Case 3.2: $d_{m-\tilde{m}, m-\tilde{m}}^{(1)}(z_{\ast}),\hdots,d_{m-\tilde{m},m-\tilde{m}}^{(l-1)}(z_{\ast})\neq 0$ and $d_{m-\tilde{m}, m-\tilde{m}}^{(l)}(z_{\ast})=\cdots=d_{m-\tilde{m},m-\tilde{m}}^{(N)}(z_{\ast})=0$ for some $l\in\{2,\hdots,N\}$\\
All entries of row $(l-2)(\tilde{m}+1)+1$ of $\mathcal{D}_N^{(\tilde{m}+1)}(z_{\ast})$ but $d_{m-\tilde{m},m-\tilde{m}}^{(l-1)}(z_{\ast})\neq 0$ are equal to zero. Hence, deleting the row and column of this entry decreases the rank by $1$. After that, for $l\geq 3$, row $(l-3)(\tilde{m}+1)+1$ of the remaining matrix consists only of zeros but $d_{m-\tilde{m},m-\tilde{m}}^{(l-2)}(z_{\ast})\neq 0$ and the procedure could be iterated until all rows and columns of the entries $d_{m-\tilde{m}, m-\tilde{m}}^{(1)}(z_{\ast}),\hdots,d_{m-\tilde{m},m-\tilde{m}}^{(l-1)}(z_{\ast})$ are deleted and the rank is decreased by $l-1$. Moreover, the entries of the rows $(l-1+j)(\tilde{m}+1)+1$ for $j=0,\hdots, N-l-1$ of $\mathcal{D}_N^{(\tilde{m}+1)}$ that are no fixed zeros are contained in the set $\{d_{m-\tilde{m}, m-\tilde{m}}^{(l)},\hdots, d_{m-\tilde{m},m-\tilde{m}}^{(N)}\}$. Thus, these rows consist only of zeros at $z_{\ast}$ and could be deleted without changing the rank. Deleting also the columns of these entries, could only decrease the rank and one ends up with $\mathcal{D}_N^{(\tilde{m})}(z_{\ast})$, which, consequently, has rank at most $(N-1)(\tilde{m}+1)-k-l+1=(N-1)\tilde{m}-(k+l-N)$. Per induction, this leads to a cardinality of $O(|X(N)|\cdot t^{\tilde{m}+k+l-N-1})$.\\
Since each $z_{\ast}$ such that $\mathcal{D}_N^{(\tilde{m})}(z_{\ast})$ is not of full row rank is a common zero of all full size subminors of $\mathcal{D}_N^{(\tilde{m})}$, it is, in particular, a zero of $\prod_{i=1}^N\det(D_i^{(\tilde{m})})$. Therefore, for each $\mathcal{D}_N^{(\tilde{m})}$, there exist only finitely many such $z_{\ast}\in\overline{\mathbb F}$. Consequently, one could regard $z_{\ast}$ as already fixed when considering the further conditions $d_{m-\tilde{m}, m-\tilde{m}}^{(l)}(z_{\ast})=\cdots=d_{m-\tilde{m},m-\tilde{m}}^{(N)}(z_{\ast})=0$. Thus, these conditions contribute the factor $t^{N-l+1+\tilde{m}(N-l+1)}$ to the cardinality. Here, the summand $\tilde{m}(N-l+1)$ is due to the fact that $d_{m-\tilde{m},m-\tilde{m}}^{(i)}\not\equiv 1$ for $i=l,\hdots,N$; see \eqref{codim}. In summary, the cardinality is $O(|X(N)|\cdot t^{\tilde{m}+k+\tilde{m}(N-l+1)})=O(|X(N)|\cdot t^{\tilde{m}+k+1})$.\\
Case 3.3: $d_{\tilde{m}-m,\tilde{m}-m}^{(i)}(z_{\ast})\neq 0$ for $i=1,\hdots,N$\\
From claim 1 with $D_i^{(\tilde{m}+1)}=\left[\begin{array}{cc}
v_i & 0 \\ 
w_i & D_i^{(\tilde{m})}
\end{array}\right]$ and $v_i=d_{\tilde{m}-m,\tilde{m}-m}^{(i)}$ for $i=1,\hdots,N$, one knows $\operatorname{rk}(\mathcal{D}^{(\tilde{m}+1)}_N(z_{\ast}))=\operatorname{rk}(r(z_{\ast})\ \mathcal{D}_N^{(\tilde{m})}(z_{\ast}))+N-1$, where $r=\begin{pmatrix}w_1-\frac{v_1}{v_2}w_2\\ \hdots\\ (-1)^N(\frac{w_{N-1}}{v_{N-1}}-\frac{w_N}{v_N})v_1\end{pmatrix}\in\mathbb F^{(N-1)\tilde{m}}[z]$. It follows $\operatorname{rk}(r(z_{\ast})\ \mathcal{D}_N^{(\tilde{m})}(z_{\ast}))\leq (N-1)\tilde{m}-k$. If $\operatorname{rk}(\mathcal{D}_N^{(\tilde{m})}(z_{\ast}))\leq (N-1)\tilde{m}-k-1$, one knows per induction that the cardinality is $O(|X(N)|\cdot t^{\tilde{m}+k})$. If $\mathcal{D}_N$ is not of simple form, one has an additional factor of at most $t$, no matter if $\mathcal{D}_N^{(\tilde{m})}$ is of simple form or not.\\
If $\operatorname{rk}(\mathcal{D}_N^{(\tilde{m})}(z_{\ast}))=(N-1)\tilde{m}-k$, one knows that the cardinality is $O(|X(N)|\cdot t^{\tilde{m}+k-1})$ and additionally, that $r(z_{\ast})$ lies in the column span of $\mathcal{D}_N^{(\tilde{m})}(z_{\ast})$. Hence, one has to show that this second condition leads to an additional factor for the probability that is $O(t)$.
As seen above, 
for each $\mathcal{D}_N^{(\tilde{m})}$, there exist only finitely many $z_{\ast}\in\overline{\mathbb F}$ with $\operatorname{rk}(\mathcal{D}_N^{(\tilde{m})}(z_{\ast}))=(N-1)\tilde{m}-k$. Consequently, one just has to consider the case that $z_{\ast}$ and $\mathcal{D}_N^{(\tilde{m})}$ are fixed and the vector $r(z_{\ast})$ lies in the column span of $\mathcal{D}_N^{(\tilde{m})}(z_{\ast})$.
If $(N-1)\tilde{m}-k<0$, one has a cardinality that is $O(|X(N)|\cdot t^{\tilde{m}+k+1})$, anyway (see case 1).\\
If $(N-1)\tilde{m}-k=0$, one has $\mathcal{D}_N^{(\tilde{m})}(z_{\ast})\equiv 0$ and $r(z_{\ast})\equiv 0$.
Hence, $\kappa_j^{(i)}\geq 1$ for $i=1, \hdots, N$ and $j=1,\hdots,\tilde{m}$, and thus, the vectors $w_1, \hdots, w_N$ contain no entries that are fixed (to zero) by degree conditions. Furthermore, one has, amongst others, $w_1(z_{\ast})=\frac{v_1}{v_2}w_2(z_{\ast})$, which means, in particular, that the first component of $w_1$ is fixed by the other polynomials, which contributes a factor that is $O(t)$ to the cardinality; see Lemma \ref{res} (b)).\\
If $(N-1)\tilde{m}-k>0$, one could choose $(N-1)\tilde{m}-k$ linearly independent rows in $\mathcal{D}_N^{(\tilde{m})}(z_{\ast})$. If there exist $i\in\{1,\hdots,N-1\}$ and $j\in\{1,\hdots,\tilde{m}\}$ such that the $j$-th components of $w_i$ and $w_{i+1}$ are fixed to zero by degree conditions, which is the case if and only if $\kappa_{\tilde{m}+1-j}^{(i)}=\kappa_{\tilde{m}+1-j}^{(i+1)}=0$, row $\tilde{m}(i-1)+j$ has ones in the positions $\tilde{m}(i-1)+j$ and $\tilde{m}i+j$ and zeros, elsewhere. Thus, all these rows are linearly independent and one could assume without restriction that they are contained in the chosen set of linearly independent rows. Permute the rows of $[r\ \mathcal{D}_N^{(\tilde{m})}]$ in such way that the entries of the chosen rows of $\mathcal{D}_N^{(\tilde{m})}$ are contained in rows $1, \hdots, (N-1)\tilde{m}-k$, which we call upper part, while the other rows of $\mathcal{D}_N^{(\tilde{m})}$ should be called lower part. Clearly, interchanging rows does not change the rank of the whole matrix. In the following, $[r\ \mathcal{D}_N^{(\tilde{m})}]$ should denote the matrix with the already interchanged rows. Note that the Hermite form is lost by this interchanging process but that does not matter for the following considerations.\\
Next, delete $\tilde{m}+k$ columns of $\mathcal{D}_N^{(\tilde{m})}(z_{\ast})$ such that the remaining entries of the upper part form an invertible matrix, denoted by $\overline{D}$. The matrix consisting of the remaining entries of the lower part should be denoted by $\underline{D}$. Analogously, denote the corresponding parts of $r$ by $\overline{r}\in\mathbb F [z]^{(N-1)\tilde{m}-k}$ and $\underline{r}\in\mathbb F [z]^k$, respectively. Since the column rank of $\begin{pmatrix} \overline{D}(z_{\ast})\\ \underline{D}(z_{\ast}) \end{pmatrix}$ is still $(N-1)\tilde{m}-k$, its column span is equal to the column span of $\mathcal{D}_N^{(\tilde{m})}(z_{\ast})$ and therefore, $r(z_{\ast})$ is contained in it. Hence, there exists $\lambda\in\overline{\mathbb F}^{(N-1)\tilde{m}-k}$ with $\begin{pmatrix} \overline{D}(z_{\ast})\lambda\\ \underline{D}(z_{\ast})\lambda \end{pmatrix}=\begin{pmatrix} \overline{r}(z_{\ast})\\ \underline{r}(z_{\ast}) \end{pmatrix}$, i.e. $\underline{r}(z_{\ast})=\underline{D}(z_{\ast})\overline{D}^{-1}(z_{\ast})\overline{r}(z_{\ast})$. Denote the last row of $\underline{D}\overline{D}^{-1}$ by $d_1,\hdots d_{(N-1)\tilde{m}-k}$. Then $\underline{r}_k(z_{\ast})=\sum_{l=1}^{(N-1)\tilde{m}-k}d_l\overline{r}_l(z_{\ast})$. Moreover, $w_{i,j}$ should denote the $j$-th component of the vector $w_i\in\mathbb F[z]^{\tilde{m}}$. Thus, $\underline{r}_k=(-1)^i\left(\frac{w_{i,j}}{v_i}-\frac{w_{i+1,j}}{v_{i+1}}\right)v_1$ for some $i\in\{1,\hdots,N-1\}$ and $j\in\{1,\hdots,\tilde{m}\}$. 
The polynomials $w_{i,j}$ and $w_{i,j+1}$ could not both be fixed to zero due to degree conditions since otherwise $(-1)^i\left(\frac{w_{i,j}}{v_i}-\frac{w_{i+1,j}}{v_{i+1}}\right)v_1$ would belong to $\overline{r}$ per construction of upper and lower part. Assume without restriction that $w_{i,j}$ is no fixed zero. \\
First, consider the case that $(-1)^i\left(\frac{w_{i,j}}{v_i}-\frac{w_{i-1,j}}{v_{i-1}}\right)v_1$ is not contained in $\overline{r}$ and hence, $w_{i,j}$ is not contained in the term for any entry of $\overline{r}$. Then, one could choose all polynomial entries of $\mathcal{D}_N^{(\tilde{m}+1)}$ but $w_{i,j}$ arbitrarily, which effects that $w_{i,j}(z_{\ast})$ is fixed. However, this contributes a factor of $O(t)$ to the cardinality; see Lemma \ref{res} (b). If $(-1)^i\left(\frac{w_{i,j}}{v_i}-\frac{w_{i-1,j}}{v_{i-1}}\right)v_1$ is contained in $\overline{r}$, assume without restriction that it equals $\overline{r}_1$. Then, one has 
\begin{align}\label{wij}
\frac{w_{i,j}}{v_i}(1-d_1)v_1(z_{\ast})=\left(\frac{w_{i+1,j}}{v_{i+1}}-\frac{w_{i-1,j}}{v_{i-1}}d_1\right)v_1(z_{\ast})+(-1)^i\sum_{l=2}^{(N-1)\tilde{m}-k}d_l\overline{r}_l(z_{\ast}).
\end{align}
Consider $d_1(z_{\ast})=\sum_{l=1}^{(N-1)\tilde{m}-k}\underline{D}_{k,l}\overline{D}^{-1}_{l,1}(z_{\ast})$.\\
Case 3.3.1: The entries of $\kappa$ are so that $d_1\equiv 0$ (by degree conditions).\\
Here, one has, in particular, $d_1(z_{\ast})\neq 1$ and could, therefore, solve equation \eqref{wij} with respect to $w_{i,j}(z_{\ast})$. Hence, one has a factor that is $O(t)$ for the cardinality and is done.\\
Case 3.3.2: The entries of $\kappa$ are not so that they imply $d_1\equiv 0$.\\
If $d_1(z_{\ast})=0$, which also implies that one could solve equation \eqref{wij} with respect to $w_{i,j}(z_{\ast})$ and consequently, is done as well, there exists $l_{\ast}$ such that neither $\underline{D}_{k.l_{\ast}}\equiv 0$ nor $\overline{D}^{-1}_{l_{\ast},1}\equiv 0$ due to degree restrictions (caused by the values of $\kappa$). Thus, either $\overline{D}^{-1}_{l_{\ast},1}(z_{\ast})=0$, which leads to a factor which is $O(t)$ for the cardinality, or one could solve the equation $d_1(z_{\ast})=0$ with respect to $\underline{D}_{k,l_{\ast}}(z_{\ast})$ (that is no fixed $1$ per construction of upper and lower part), which provides the factor $O(t)$, too. Therefore, the probability that $d_1(z_{\ast})=0$ if not $d_1\equiv 0$ due to degree conditions, is $O(t)$. Hence, it only remains to investigate what happens if $d_1(z_{\ast})\neq 0$, which is true with a probability of $1-O(t)$ in the considered case. This implies that the probability that $\operatorname{rk}(\mathcal{D}_N^{(\tilde{m})}(z_{\ast}))=(N-1)\tilde{m}-k$ under the condition $d_1(z_{\ast})\neq 0$ is $\frac{O(t^{\tilde{m}+k-1})}{1-O(t)}=O(t^{\tilde{m}+k-1})$.\\
Per construction of $\overline{D}$ and $\underline{D}$, it does not influence the condition $\operatorname{rk}(\mathcal{D}_N^{(\tilde{m})}(z_{\ast}))=(N-1)\tilde{m}-k$, which nonzero value is taken by $d_1(z_{\ast})$. This is true since $\overline{D}(z_{\ast})$ is invertible and therefore, the rows of $\underline{D}(z_{\ast})$ are linearly dependent on the rows of $\overline{D}(z_{\ast})$, anyway. Moreover, multiplying a row by a nonzero factor, does not influence linear dependence, i.e. does not influence the number of possibilities for the entries of $\mathcal{D}_N^{(\tilde{m})}$ which are not contained in $\overline{D}$ or $\underline{D}$. 
If $d_1(z_{\ast})\neq 1$, one could solve equation \eqref{wij} with respect to $w_{i,j}(z_{\ast})$ and is done.\\
If $1=d_1(z_{\ast})=\sum_{l=1}^{(N-1)\tilde{m}-k}\underline{D}_{k,l}\overline{D}^{-1}_{l,1}(z_{\ast})$, there exists $l_0\in\{1,\hdots, (N-1)\tilde{m}-k\}$ such that $\overline{D}^{-1}_{l_0,1}(z_{\ast})\neq 0$ and $\underline{D}_{k,l_0}$ is no fixed zero (it cannot be a fixed $1$ per construction of upper and lower part). Consequently,  one could solve the above equation with respect to $\underline{D}_{k,l_0}(z_{\ast})$. Because it follows from the preceding considerations that the condition $d_1(z_{\ast})=1$ is independent from the condition $\operatorname{rk}(\mathcal{D}_N^{(\tilde{m})}(z_{\ast}))=(N-1)\tilde{m}-k$, one gets an additional factor that is $O(t)$ for the cardinality. 
As in previous cases, the cardinality is decreased by a factor of at most $t$ if one has no simple form and thus, all cases are finished.\\
Note that it is sufficient to consider these three cases since the order of $D_1,\hdots,D_N$ is not relevant for the property to be mutually left coprime. Therefore, the proof of claim 2 is complete.\\
Using claim 2 with $\tilde{m}=m$ and $k=1$, completes the first part of this proof.\\
Next, one needs to compute the probability for the case that $\mathcal{D}_N\subset A_{N+1}(N)$ is of simple form, i.e. the case that $D_i=\left[\begin{array}{cc}
I_{m-1} & 0 \\ 
d_1^{(i)}\cdots d_{m-1}^{(i)} & d_m^{(i)}
\end{array}\right]$ for $i=1,\hdots,N$.\\
\textbf{Claim 3}:\\
For 
\begin{align*}
&\mathcal{D}:=\\
&\left[\begin{array}{ccccccc}
d^{(1)}_1-d^{(2)}_1 & \hdots & d^{(1)}_{m-1}-d^{(2)}_{m-1} & d^{(1)}_m & d^{(2)}_m &   &   \\ 
d^{(3)}_1-d^{(2)}_1 & \hdots & d^{(3)}_{m-1}-d^{(2)}_{m-1} &  &  d^{(2)}_m &   d^{(3)}_m &     \\ 
\vdots & & \vdots & & \ddots & \ddots &  \\ 
(-1)^N(d^{(N-1)}_1-d^{(N)}_1) & \hdots & (-1)^N(d^{(N-1)}_{m-1}-d^{(N)}_{m-1}) &   &    &  d^{(N-1)}_m  &  d^{(N-1)}_m
\end{array}\right],
\end{align*}
it holds
$\operatorname{rk}(\mathcal{D}_N)=\operatorname{rk}(\mathcal{D})+(m-1)(N-1)$.\\
Proof of claim 3:\\
One proceeds as in the proof of claim 1 (with $v_i=1$) and achieves:
\begin{align*}
&\operatorname{rk}(\mathcal{D}_N)=\\
&=\operatorname{rk}\left[\begin{array}{ccccc}
w_1-w_2 & D^{(m-1)}_1 & D^{(m-1)}_2 &   &   \\ 
w_3-w_2 &   &  D^{(m-1)}_2 &   D^{(m-1)}_3 &     \\ 
\vdots  & & \ddots & \ddots &  \\ 
(-1)^N(w_{N-1}-w_N) &   &    &  D^{(m-1)}_{N-1}  &  D^{(m-1)}_N
\end{array}\right]+N-1,
\end{align*}
where $w_i=(0,\hdots,0, d_1^{(i)})^\top\in\mathbb F[z]^{m-1}$ and $D_i^{(m-1)}\in\mathbb F[z]^{(m-1)\times (m-1)}$ is in simple form for $i=1,\hdots, N$. One iterates this procedure $m-1$ times and since one always adds the first row of a block to rows further down, the first column of the whole matrix is not affected. Deleting the corresponding row, only deletes one zero in each of the vectors $w_i$. After $m-1$ iterations, one ends up with the statement of claim 3 and thus, claim 3 is proven.\\
Consequently, for simple form, $D_1,\hdots, D_N$ are not mutually left coprime if and only if there exist $z_{\ast}\in\overline{\mathbb F}$ and $\xi\in\overline{\mathbb F}^{1 \times (N-1)}\setminus\{0\}$ such that $\xi\mathcal{D}(z_{\ast})=0$,
which is equivalent to
\begin{align}\label{above}
\xi_1d_i^{(1)}(z_{\ast})-(\xi_1+\xi_2)d_i^{(2)}(z_{\ast})+\cdots
&+(-1)^N(\xi_{N-2}+\xi_{N-1})d_i^{(N-1)}(z_{\ast})+\nonumber\\
+(-1)^{N+1}\xi_{N-1}d_i^{(N)}(z_{\ast})&=0\ \text{for}\ i=1,\hdots m-1\nonumber\\
\xi_1d_m^{(1)}(z_{\ast})&=0\nonumber\\
(\xi_{i-1}+\xi_i)d_m^{(i)}(z_{\ast})&=0\ \text{for}\ i=2,\hdots,N-1\nonumber\\ \xi_{N-1}d_m^{(N)}(z_{\ast})&=0.
\end{align}
Next, define $\tilde{A}_{N+1}(N)$ as the subset of $X(N)$ for which there exists $z_{\ast}\in\overline{\mathbb F}$ such that $\mathcal{D}_N(z_{\ast})$ is singular and $\det(D_i(z_{\ast}))=0$ for $i=1,\hdots,N$. From Lemma \ref{det}, it follows $A_{N+1}(N)\subset\tilde{A}_{N+1}(N)$. In the following, we compute the cardinality of $\tilde{A}_{N+1}(N)$ for simple form, i.e. the probability that there exist $z_{\ast}\in\overline{\mathbb F}$ and $\xi\in\mathbb F(z_{\ast})^{1 \times (N-1)}\setminus\{0\}$ with $d_m^{(i)}(z_{\ast})=0$ for $i=1,\hdots,N$, such that the first $m-1$ equations of \eqref{above} are fulfilled. Firstly, that these $N$ polynomials have a common zero gives a factor to the cardinality of $O(t^{N-1})$.\\
If $m<N-1$, this leads to a cardinality that is $O(|X(N)|\cdot t^{m+1})$.\\ 
To consider the case $m\geq N-1$, one sorts the possible values for $z_{\ast}$ with respect to the degree of their minimal polynomial and sets $g:=g_{z_{\ast}}$. Then, $\xi\in(\mathbb F^g)^{1\times(N-1)}\setminus \{0\}$, where $\mathbb F^g$ denotes the extension field of $\mathbb F$ with $t^{-g}$ elements. Hence, there are $t^{-g(N-1)}-1$ possibilities for the choice of $\xi$. 
Since there exists $i\in\{1,\hdots,N-1\}$ with $\xi_i\neq 0$, at least one element of $\{\xi_1,\xi_1+\xi_2,\hdots,\xi_{N-2}+\xi_{N-1},\xi_{N-1}\}$ is unequal to zero and thus, there exists $j_0\in\{1,\hdots,N\}$ such that one could solve equations $1$ to $m-1$ of \eqref{above} with respect to $d_i^{(j_0)}(z_{\ast})$ for $i=1,\hdots, m-1$. Assume without restriction that $j_0=1$. If the other entries of $\mathcal{D}_N$ as well as $z_{\ast}$ are fixed, for $\xi,\hat{\xi}\in(\mathbb F^g)^{1\times(N-1)}\setminus \{0\}$, one obtains the same values for $\xi_1 d_i^{(1)}(z_{\ast})$ if and only if 
\begin{align*}
&(\xi_1+\xi_2)d_i^{(2)}(z_{\ast})+\cdots+(-1)^{N+1}\xi_{N-1}d_i^{(N)}(z_{\ast})=\\
&(\hat{\xi}_1+\hat{\xi}_2)d_i^{(2)}(z_{\ast})+\cdots+(-1)^{N+1}\hat{\xi}_{N-1}d_i^{(N)}(z_{\ast})
\end{align*}
for $i=1,\hdots,m-1$. But these equations hold if and only if the vector $\begin{pmatrix}\xi_1+\xi_2\\ \vdots\\ (-1)^{N-1}\xi_{N-1} \end{pmatrix}-\begin{pmatrix}\hat{\xi}_1+\hat{\xi}_2\\ \vdots\\ (-1)^{N-1}\hat{\xi}_{N-1} \end{pmatrix}$ is contained in the kernel of 
$$D(z_{\ast}):=\left[\begin{array}{ccc}
d_1^{(2)} & \hdots & d_1^{(N)} \\ 
\vdots &  & \vdots \\ 
d_{m-1}^{(2)} & \hdots & d_{m-1}^{(N)}
\end{array}\right](z_{\ast}).$$
The probability that the column rank of $D(z_{\ast})$ is $\min(N-1,m-1)$ is equal to $1-O(t)$. This is true because the probability that a full size minor of this matrix is zero at a fixed value $z_{\ast}$ is equal to $O(t)$ if one chooses $d_j^{(i)}$ with $\deg(d_j^{(i)})<n_i$ for $j=1,\hdots m-1$ and $i=1,\hdots,N$ randomly; this follows from Lemma \ref{res} (a) and (b) because that a minor is zero implies that either one of the involved polynomial entries is zero or one of the entries is fixed by the others. Consequently, for $m\geq N$, i.e. $\min(N-1,m-1)=N-1$, the probability that the kernel of $D(z_{\ast})$ is zero is equal to $1-O(t)$. Therefore, the probability that $\xi=\hat{\xi}$ is equal to $1-O(t)$, too.\\
For $m=N-1$, one has $\min(N-1,m-1)=N-2$ and thus, the probability that the kernel has dimension one is equal to $1-O(t)$. This means that only $\xi$ that differ by a nonzero scalar factor lead to the same solution. But if one multiplies $\xi$ by a factor from $\mathbb F^g\setminus\{0\}$, the set of possible values for the $D_i$ which fulfill \eqref{above} does not change, anyway.\\
In summary, with probability $1-O(t)$, one has $\frac{t^{-g(N-1)}-1}{t^{-g}-1}=\sum_{k=0}^{N-2}(t^{-g})^k=t^{-g(N-2)}(1-O(t))$ possibilities for $\xi$ and according to Lemma \ref{res} (a) and (b), for each $\xi$, there are $O(|X(N)|\cdot t^{g(N-1)+m-1})$ possibilities for $\mathcal{D}_N$.
Hence, the probability is $O(t^{g+m-1})=O(t^{m+1})$ for $g\geq 2$. Since one already knows that $f_{z_{\ast}}$ divides $d_m^{(i)}$ for $i=1,\hdots,N$, one has $g\leq \min(n_1,\hdots,n_N)$, i.e. there are only finitely many possibilities for $g$. Consequently, only the case $g=1$ is relevant for the computation of the coefficient of $t^m$. Here, one has $t^{-(N-2)}(1-O(t))$ possibilities for $\xi$ and according to Lemma \ref{res} (a) and (b), $t^{N+m-2}$ possibilities for $z_{\ast}$ and $\mathcal{D}_N$. Hence, the probability of $\tilde{A}_{N+1}(N)$ is $t^m+O(t^{m+1})$.\\
Next, we show that for simple form, it holds $|A_{N+1}(N)|=|\tilde{A}_{N+1}(N)|+O(|X(N)|\cdot t^{m+1})$, which imples $|A_{N+1}(N)|=|X(N)|\cdot O(t^{m+1})$ if $m<N-1$ and $|A_{N+1}(N)|=|X(N)|\cdot(t^m+O(t^{m+1}))$ if $m\geq N-1$, i.e. 
$$|A_{N+1}(N)|=|X(N)|\cdot\left(\sum_{i=N-1}^{\min(m,N-1)}t^m+O(t^{m+1})\right).$$
We prove this by showing that for simple form, $M^C(N):=X(N)\setminus mut(N)$ and $A:=(M^C(N)\setminus A_{N+1}(N))\cap \tilde{A}_{N+1}(N)$, one has $|A|=O(|X(N)|\cdot t^{m+1})$. It holds $(D_1,\hdots, D_N)\in A$ if and only if there exist $z_{\ast}, \tilde{z}_{\ast}\in\overline{\mathbb F}$ such that $d^{(i)}_m(z_{\ast})=0$ for $i=1,\hdots,N$ and the first $m-1$ equations of \eqref{above} are fulfilled for $z_{\ast}$ and there exists a subset of $N-1$ matrices which fulfil equations \eqref{above} at $\tilde{z}_{\ast}$. Since the number of choices for this subset is equal to $N$ and therefore finite, it follows from preceding computations that the probability of $M^C(N)\setminus A_{N+1}(N)$ (i.e. of the condition concerning $\tilde{z}_{\ast}$) is $O(t^m)$. Without restriction, let the mentioned subset be $\{D_1,\hdots,D_{N-1}\}$. If $z_{\ast}=\tilde{z}_{\ast}$, one has, amongst others, the additional condition $d_m^{(N)}(\tilde{z}_{\ast})=0$, which gives a factor  that is $O(t)$ for the probability, according to Lemma \ref{res} (a). If $z_{\ast}\neq \tilde{z}_{\ast}$, one has the additional conditions that $d^{(i)}_m(z_{\ast})=0$ for $i=1,\hdots,N$, which contributes a factor that is $O(t^{N-1})=O(t)$. Consequently, in summary, one has that the probability of $A$ is $O(t^{m+1})$, which is what we wanted to show.

It remains to compute $|A_I(N)|$. From claim 2, 
one already knows $|A_I(N)|=|X(N)|\cdot O(t^m)$. First consider $A_i(N)\cap A_j(N)$, i.e. $I=\{i,j\}$ with $i\neq j$, and assume without restriction $i=1$ and $j=N$. It holds $(D_1,\hdots,D_N)\in A_1(N)\cap A_N(N)$ if and only if $\{D_2,\hdots,D_N\}$ and $\{D_1,\hdots,D_{N-1}\}$ are not mutually left coprime. Since the condition that $\{D_2,\hdots,D_N\}$ are not mutually left coprime causes already a factor for the probability that is $O(t^{m+1})$ if $\mathcal{D}_N$ is not of simple form, it is only necessary to consider simple form. Denote by $\hat{A}_{1,N}(N)$ the subset of $X(N)$ for which $\{D_2,\hdots,D_{N-1}\}$ are not mutually left coprime and write $|A_1(N)\cap A_N(N)|=|A_1(N)\cap A_N(N)\cap \hat{A}_{1,N}(N)|+|A_1(N)\cap A_N(N)\cap \hat{A}_{1,N}^C(N)|$, where $\hat{A}_{1,N}^C(N)$ denotes the complementary set $X(N)\setminus \hat{A}_{1,N}(N)$. Moreover, denote by $\mathcal{D}_N^{(1)}$ the matrix that is achieved if the first $m$ rows and columns of $\mathcal{D}_N$ are deleted. Analogously, denote by $\mathcal{D}_N^{(N)}$ the matrix that is achieved if the last $m$ rows and columns of $\mathcal{D}_N$ are deleted.
If $\mathcal{D}_N\in A_1(N)\cap A_N(N)$ is of simple form, one knows that equations \eqref{above} are valid for $\mathcal{D}_N^{(1)}$ as well as for  $\mathcal{D}_N^{(N)}$. Denote the corresponding $\xi$, $z_{\ast}$ and $g_{z_{\ast}}$ by $\xi^{(1)}, z_{\ast}^{(1)}, g^{(1)}$ and $\xi^{(N)}, z_{\ast}^{(N)}, g^{(N)}$, respectively.
If $\mathcal{D}_N\in A_1(N)\cap A_N(N)\cap \hat{A}_{1,N}^C(N)$, one has $\xi_{N-2}^{(1)}\neq 0$ as well as $\xi_{1}^{(N)}\neq 0$ and therefore, $d_m^{(N)}(z_{\ast}^{(1)})=d_m^{(1)}(z_{\ast}^{(N)})=0$ (see proof of Remark \ref{det}). For fixed $z_{\ast}^{(1)}$ and $z_{\ast}^{(N)}$ (where without restriction $\deg(d_m^{(N)})=n_m\geq g^{(1)}$ and $\deg(d_m^{(1)})=n_1\geq g^{(N)}$ since otherwise, $A_1(N)\cap A_N(N)\cap \hat{A}_{1,N}^C(N)=\emptyset$, anyway), this contributes a factor of $t^{g^{(1)}}$ for the probability that $D_2,\hdots, D_N$ are not mutually left coprime and a factor of $t^{g^{(N)}}$ for the probability that $D_1,\hdots, D_{N-1}$ are not mutually left coprime. Thus, the other equations of $\eqref{above}$ for $\mathcal{D}_N^{(1)}$ contribute a factor that is $O(t^{m-g^{(1)}})$ and the other equations of $\eqref{above}$ for $\mathcal{D}_N^{(N)}$ contribute a factor that is $O(t^{m-g^{(N)}})$. Assume without restriction $g^{(1)}\geq g^{(N)}$. Then, one has a contribution to the probability that is $O(t^{m-g^{(1)}})$ by the equations for $\mathcal{D}_N^{(1)}$ and $\mathcal{D}_N^{(N)}$ but $d_m^{(N)}(z_{\ast}^{(1)})=d_m^{(1)}(z_{\ast}^{(N)})=0$ and the additional factor $t^{g^{(1)}+g^{(N)}}$ for these equations. Hence, in summary, $|A_1(N)\cap A_N(N)\cap \hat{A}_{1,N}^C(N)|=O(|X(N)|\cdot t^{m+g^{(N)}})=O(|X(N)|\cdot t^{m+1})$. Consequently, $|A_1(N)\cap A_N(N)|=|A_1(N)\cap A_N(N)\cap \hat{A}_{1,N}(N)|+O(|X(N)|\cdot t^{m+1})=|\hat{A}_{1,N}(N)|+O(|X(N)|\cdot t^{m+1})$.
Therefore, $|A_i(N)\cap A_j(N)|=|X(N)|\cdot (1-P_m(N-2)+O(t^{m+1}))$ for $i,j\in\{1,\hdots,N\}$ with $i\neq j$.\\
Next, it is shown per induction with respect to $|I|$ that $|A_I(N)|=|\hat{A}_I(N)|+O(|X(N)|\cdot t^{m+1})$ for $2\leq |I|\leq N-2$, where $\hat{A}_I(N)$ denotes the subset of $X(N)$ for which the $N-|I|$ matrices from the set $\{D_i\}_{i\in\{1,\hdots,N\}\setminus I}$ are not mutually left coprime. The proof of the corresponding base clause has already been done in the preceding paragraph. For $|I|=k$ with $3\leq k\leq N-2$, assume without restriction that $I=\{N-k+1,\hdots,N\}$. Since $\hat{A}_I(N)\subset A_I(N)$, one has
\begin{align*}
A_I(N)=&A_{N-k+2,\hdots,N}(N)\cap A_{N-k+1,N-k+3,\hdots,N}(N)=\\
=&\hat{A}_{N-k+2,\hdots,N}(N)\cap \hat{A}_{N-k+1,N-k+3,\hdots,N}(N)+\\
+&(A_{N-k+2,\hdots,N}(N)\cap A_{N-k+1,N-k+3,\hdots,N}(N))\setminus\\
&(\hat{A}_{N-k+2,\hdots,N}(N)\cap \hat{A}_{N-k+1,N-k+3,\hdots,N}(N)).
\end{align*}
Furthermore, 
\begin{align*}
&|(A_{N-k+2,\hdots,N}(N)\cap A_{N-k+1,N-k+3,\hdots,N}(N))\setminus\\
&(\hat{A}_{N-k+2,\hdots,N}(N)\cap \hat{A}_{N-k+1,N-k+3,\hdots,N}(N))|\leq\\
&\leq |(A_{N-k+2,\hdots,N}(N)\cap A_{N-k+1,N-k+3,\hdots,N}(N))\setminus\hat{A}_{N-k+2,\hdots,N}(N)|+\\
&+|(A_{N-k+2,\hdots,N}(N)\cap A_{N-k+1,N-k+3,\hdots,N}(N))\setminus\hat{A}_{N-k+1,N-k+3,\hdots,N}(N)|\leq\\
&\leq |A_{N-k+2,\hdots,N}(N)\setminus\hat{A}_{N-k+2,\hdots,N}(N)|+\\
&+|A_{N-k+1,N-k+3,\hdots,N}(N)\setminus\hat{A}_{N-k+1,N-k+3,\hdots,N}(N)|=O(|X(N)|\cdot t^{m+1})
\end{align*}
per induction since $|\{N-k+2,\hdots,N\}|=|\{N-k+1,N-k+3,\hdots,N\}|=k-1$.
 Moreover, it holds $\hat{A}_{N-k+2,\hdots,N}(N)=\hat{A}_{N-k+2}(N-k+2)=A_{N-k+2}(N-k+2)$ and $\hat{A}_{N-k+1,N-k+3,\hdots,N}(N)=\hat{A}_{N-k+1}(N-k+2)=A_{N-k+1}(N-k+2)$. Consequently, 
 \begin{align}\label{hA}
 |A_I(N)|&=|A_{N-k+2}(N-k+2)\cap A_{N-k+1}(N-k+2)|+O(|X(N)|\cdot t^{m+1})=\nonumber\\
 &=|A_{N-k+1,N-k+2}(N-k+2)|+O(|X(N)|\cdot t^{m+1})=\nonumber\\
 &=|\hat{A}_{N-k+1,N-k+2}(N-k+2)|+O(|X(N)|\cdot t^{m+1})=\nonumber\\
 &=|\hat{A}_I(N)|+O(|X(N)|\cdot t^{m+1}).
 \end{align}
Here, the third equation follows from the base clause.\\
For $|I|=N-1$, assume without loss of generality that $I=\{2,\hdots,N\}$. Analogous to the first line of \eqref{hA} (with setting $k=N-1$), one gets $|A_I(N)|=|S_{12}\cap S_{13}|+O(|X(N)|\cdot t^{m+1})$, where $S_{12}$ and $S_{13}$ are the subsets of $X(N)$ for which $D_1,D_2$ and $D_1,D_3$ are not left coprime, respectively. In the proof of Theorem \ref{pairwise}, it has been shown that $|S_{12}\cap S_{13}|=O(|X(N)|\cdot t^{m+1})$. Therefore, $|A_I|=O(|X(N)|\cdot t^{m+1})$ for $|I|=N-1$ and consequently, $|A_I|=O(|X(N)|\cdot t^{m+1})$ for $|I|=N$, too.\\
In summary, one has $|A_I|=|X(N)|\cdot(1-P_m(N-|I|)+O(t^{m+1}))$ for $|I|\leq N-2$ and $|A_I|=|X(N)|\cdot O(t^{m+1})$ for $|I|\in\{N-1,N\}$.\\
Inserting all achieved results into \eqref{sieb}, using that there are $\binom{N}{|I|}$ subset of $\{1,\hdots,N\}$ with cardinality $|I|$ and dividing by $|X(N)|$ completes the proof of the whole theorem.
\end{proof}
To prove Theorem \ref{mut}, we continue by developing an explicit formula for the coefficient of $t^m$ in $P_m(N)$. Therefore, we write 
$$P_m(N)=1+C(N)\cdot t^m+O(t^{m+1})$$
with coefficients $C(N)\in\mathbb N$, which remain to be computed. From the recursion formula of $P_m(N)$, one can deduce a recursion formula for $C(N)$, which has the following form:
$$C(N)=\sum_{k=1}^{N-2}(-1)^{k+1}\binom{N}{k}C(N-k)-\sum_{i=N-1}^{\min(m,N-1)}1.$$
Solving this recursion formula, one achieves:

\begin{lemma}
$$C(N)=-\sum_{y=2}^{m+1}\binom{N}{y}$$ 
\end{lemma}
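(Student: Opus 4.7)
The plan is to prove the closed form by induction on $N$, using the recursion for $C(N)$ together with a standard binomial identity to collapse the sum.

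For the base case take $N=2$. Theorem \ref{2} gives $P_m(2) = 1 - t^m + O(t^{m+1})$, hence $C(2) = -1$. On the other hand $\sum_{y=2}^{m+1}\binom{2}{y} = \binom{2}{2} = 1$ (all higher terms vanish), so the claimed formula holds for $N=2$.

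For the inductive step, assume $C(N-k) = -\sum_{y=2}^{m+1}\binom{N-k}{y}$ for $1 \leq k \leq N-2$ and substitute into the recursion. What remains to verify is
\begin{equation*}
\sum_{y=2}^{m+1}\binom{N}{y} - \bigl[N \leq m+1\bigr] \;=\; \sum_{k=1}^{N-2}(-1)^{k+1}\binom{N}{k}\sum_{y=2}^{m+1}\binom{N-k}{y},
\end{equation*}
where $[N \leq m+1]$ denotes the value of the trailing $-\sum_{i=N-1}^{\min(m,N-1)}1$ contributed by the recursion (which equals $1$ when $N \leq m+1$ and $0$ otherwise). I would swap the order of summation on the right, then apply the identity $\binom{N}{k}\binom{N-k}{y} = \binom{N}{y}\binom{N-y}{k}$ to pull $\binom{N}{y}$ outside, turning the right-hand side into
\begin{equation*}
\sum_{y=2}^{m+1}\binom{N}{y}\sum_{k=1}^{N-2}(-1)^{k+1}\binom{N-y}{k}.
\end{equation*}

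The inner alternating sum is now elementary: for $2 \leq y \leq N-1$ the full alternating sum $\sum_{k=0}^{N-y}(-1)^k\binom{N-y}{k}$ vanishes, so $\sum_{k=1}^{N-y}(-1)^{k+1}\binom{N-y}{k}=1$, and since $\binom{N-y}{k}=0$ for $k>N-y$ the upper limit $N-2$ does no harm; for $y=N$ the inner sum is $0$. Thus the right-hand side reduces to $\sum_{y=2}^{\min(m+1,N-1)}\binom{N}{y}$. Comparing with the left-hand side in the two cases $N \leq m+1$ (where the $y=N$ term in $\sum_{y=2}^{m+1}\binom{N}{y}$ contributes $1$, which is exactly cancelled by $[N\leq m+1]=1$) and $N > m+1$ (where $[N\leq m+1]=0$ and $\min(m+1,N-1)=m+1$) yields equality on the nose, completing the induction.

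The only mildly delicate point is bookkeeping at the boundary $y=N$ versus the indicator $[N\leq m+1]$; the Vandermonde-type identity and the classical $\sum (-1)^k\binom{n}{k}=0$ do all the real work.
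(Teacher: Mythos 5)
Your proof is correct and follows essentially the same route as the paper's: induction on $N$, substitution of the inductive hypothesis into the recursion, an interchange of the order of summation combined with the identity $\binom{N}{k}\binom{N-k}{y}=\binom{N}{y}\binom{N-y}{k}$ (the paper writes this via the common factorial form $\frac{N!}{k!\,y!\,(N-k-y)!}$), the vanishing alternating binomial sum to evaluate the inner sum, and the same final case distinction between $N\leq m+1$ and $N>m+1$ to absorb the trailing indicator term. No gaps.
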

\begin{proof}\ \\
We show this formula per induction with respect to $N$.
For $N=2$, one has $-\sum_{y=2}^{m+1}\binom{2}{y}=-1$, which coincides with the result of Theorem \ref{2coprime}.
Moreover, per induction, one knows
\begin{align*}
C(N)&=\sum_{k=1}^{N-2}(-1)^k\binom{N}{k}\sum_{y=2}^{m+1}\binom{N-k}{y}-\sum_{i=N-1}^{\min(m,N-1)}1=\\
&=\sum_{k=1}^{N-2}\sum_{y=2}^{\min(m+1,N-k)}(-1)^k\frac{N!}{k!\cdot y!\cdot (N-k-y)!}-\sum_{i=N-1}^{\min(m,N-1)}1=\\
&=\sum_{y=2}^{\min(m+1,N-1)}\sum_{k=1}^{N-y}(-1)^k\frac{N!}{k!\cdot y!\cdot (N-k-y)!}-\sum_{i=N-1}^{\min(m,N-1)}1
\end{align*}
To simplify this term, one substitutes $M=N-y$ and uses
\begin{align*}
\sum_{k=1}^{M}(-1)^{k}\frac{1}{k!(M-k)!}&=\sum_{k=0}^{M}(-1)^{k}\frac{1}{k!(M-k)!}-\frac{1}{M!}=\\
&=\frac{1}{M!}\left(\sum_{k=0}^{M}(-1)^{k}\binom{M}{k}-1\right)=-\frac{1}{M!},
\end{align*}
which is true since applying the binomial theorem yields $\sum_{k=0}^{M}(-1)^{k}\binom{M}{k}=0$. Hence, one achieves
$$C(N)=-\sum_{y=2}^{\min(m+1,N-1)}\binom{N}{y}-\sum_{i=N-1}^{\min(m,N-1)}1.$$
If $m\leq N-2$, the second sum vanishes and $\min(m+1,N-1)=m+1$. Thus, $C(N)=-\sum_{y=2}^{m+1}\binom{N}{y}$. If $m\geq N-1$, the second sum is equal to $1$ and $\min(m+1,N-1)=N-1$. Hence, one obtains $C(N)=-\sum_{y=2}^{N-1}\binom{N}{y}-1=-\sum_{y=2}^{N}\binom{N}{y}=-\sum_{y=2}^{m+1}\binom{N}{y}$ since $\binom{N}{y}=0$ for $y>N$.
\end{proof}
Finally, we reach the aim of this section and obtain an explicit formula for the probability of mutual left coprimeness:
\begin{theorem}\label{mut}\ \\
For $m, N\geq 2$, the probability that $N$ nonsingular polynomial matrices from $\mathbb F[z]^{m\times m}$ are mutually left coprime is equal to
$$P_m(N)=1-\sum_{y=2}^{m+1}\binom{N}{y}t^m+O(t^{m+1}).$$
\end{theorem}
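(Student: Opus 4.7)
The plan is essentially to assemble what has already been done. Theorem~\ref{rec} provides a recursion for $P_m(N)$ whose only terms of order $t^m$ come from the $\binom{N}{k}(1-P_m(N-k))$ pieces and the explicit sum $-\sum_{i=N-1}^{\min(m,N-1)} t^m$. Writing the ansatz $P_m(N) = 1 + C(N)\, t^m + O(t^{m+1})$ and extracting the coefficient of $t^m$ reduces everything to the purely combinatorial recursion
$$C(N) = \sum_{k=1}^{N-2}(-1)^{k+1}\binom{N}{k}C(N-k) - \sum_{i=N-1}^{\min(m,N-1)} 1,$$
which is precisely the recursion solved in the lemma immediately preceding Theorem~\ref{mut}, yielding $C(N) = -\sum_{y=2}^{m+1}\binom{N}{y}$.

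First I would argue by induction on $N$. The base case $N=2$ is Theorem~\ref{2}, since $-\sum_{y=2}^{m+1}\binom{2}{y} = -\binom{2}{2} = -1$ (all other binomials vanish). For the inductive step, I would invoke Theorem~\ref{rec}, substitute the inductive hypothesis $P_m(N-k) = 1 - \sum_{y=2}^{m+1}\binom{N-k}{y}\, t^m + O(t^{m+1})$ into the recursion, and apply the preceding lemma to collapse the resulting coefficient to $-\sum_{y=2}^{m+1}\binom{N}{y}$. Since $N$ is fixed in this context, the $O(t^{m+1})$ error terms from each $P_m(N-k)$ combine into a single $O(t^{m+1})$ remainder without difficulty.

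The hard work has already been carried out upstream: Theorem~\ref{rec} required the delicate rank and inclusion–exclusion analysis on the matrices $\mathcal{D}_N$ (Claims~1, 2, and 3 inside its proof, including the reduction to simple form and the $\tilde{A}_{N+1}(N)$ argument via Lemma~\ref{det}), while the solution of the resulting recursion for $C(N)$ hinged on the identity $\sum_{k=0}^{M}(-1)^k\binom{M}{k}=0$ together with careful tracking of the $\min(m,N-1)$ cutoff. By the time we reach Theorem~\ref{mut}, the only remaining step is to combine these two ingredients into the stated asymptotic, so no genuine obstacle is expected here; the statement is essentially a corollary of Theorem~\ref{rec} and the explicit evaluation of $C(N)$.
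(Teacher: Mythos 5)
Your proposal is correct and follows exactly the route the paper takes: the paper likewise obtains Theorem \ref{mut} by writing $P_m(N)=1+C(N)t^m+O(t^{m+1})$, reading off from Theorem \ref{rec} the recursion $C(N)=\sum_{k=1}^{N-2}(-1)^{k+1}\binom{N}{k}C(N-k)-\sum_{i=N-1}^{\min(m,N-1)}1$, and solving it by induction on $N$ (base case $N=2$ from Theorem \ref{2}) in the lemma immediately preceding the theorem. No new ideas are needed beyond those two ingredients, and your handling of the finitely many $O(t^{m+1})$ error terms is the same as the paper's.
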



\section{Application to Parallel Connected Linear Systems and Convolutional Codes}

\subsection{Reachability of Parallel Connected Linear Systems}
\label{sec:2}

The aim of this section is to compute the probability that the parallel connected system 
\begin{align}\label{system}
x_1(\tau+1)=&A_1x_1(\tau)+B_1u(\tau)\nonumber\\
&\vdots\\
x_N(\tau+1)=&A_Nx_N(\tau)+B_Nu(\tau)\nonumber
\end{align}
with state vectors $x_i\in \mathbb{F}^{n_i}$ for $i=1,\ldots,N$ and input
$u\in\mathbb F^m$ is reachable. 
To this end, consider right coprime factorizations $(zI-A_i)^{-1}B_i=P_i(z)Q_i^{-1}(z)$.

\begin{proposition}\label{ab}\cite{Fu-He15}\ \\
The parallel connected system \eqref{system} is reachable if and only if\\
(a) $(A_i,B_i)$ are reachable for $i=1,\hdots,N$ and \\
(b) $Q_1(z),..,Q_N(z)$ are mutually left coprime.
\end{proposition}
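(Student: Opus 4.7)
The plan is to use the Popov--Belevich--Hautus (PBH) rank test for the pair $(\tilde A,\tilde B)$ with $\tilde A=\operatorname{diag}(A_1,\ldots,A_N)$ and $\tilde B=[B_1^\top,\ldots,B_N^\top]^\top$, together with the polynomial identity $B_iQ_i(z)=(zI-A_i)P_i(z)$ that follows from $(zI-A_i)^{-1}B_i=P_iQ_i^{-1}$. Part (a) is immediate: if the Hautus matrix of some $(A_i,B_i)$ drops rank at $z_0$ via a nonzero row vector $\xi_i$, then embedding $\xi_i$ as the $i$-th block of a row vector padded with zeros gives a PBH witness against reachability of $(\tilde A,\tilde B)$. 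So I may assume (a) throughout the proof of (b).

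The key preparatory step is to recast Theorem \ref{mutcrit} in a PBH-friendly form: $Q_1,\ldots,Q_N$ are mutually left coprime if and only if for every $z_0\in\overline{\mathbb F}$ the only row vectors $\eta_1,\ldots,\eta_N\in\mathbb F(z_0)^{1\times m}$ satisfying $\eta_iQ_i(z_0)=0$ for all $i$ and $\sum_i\eta_i=0$ are identically zero. This follows from Theorem \ref{mutcrit} via the substitution $\eta_j:=(-1)^{j-1}(\zeta_{j-1}+\zeta_j)$ (with the convention $\zeta_0=\zeta_N=0$), whose inverse is $\zeta_k=(-1)^{k-1}\sum_{j=1}^k\eta_j$; a short telescoping check confirms that $\sum_j\eta_j=0$ holds automatically on one side and is equivalent to the boundary condition $\zeta_N=0$ on the other, and that the correspondence $\zeta\leftrightarrow\eta$ is bijective with $\zeta\neq 0\iff\eta\neq 0$.

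Granted this reformulation, the ``parallel non-reachable $\Rightarrow$ not mutually left coprime'' direction is short. If PBH fails for $(\tilde A,\tilde B)$ at $z_0$ with witness $\xi=(\xi_1,\ldots,\xi_N)\neq 0$, set $\eta_i:=\xi_iB_i$. Multiplying $B_iQ_i=(zI-A_i)P_i$ on the left by $\xi_i$ and evaluating at $z_0$ gives $\eta_iQ_i(z_0)=0$; the PBH relation $\sum_i\xi_iB_i=0$ is exactly $\sum_i\eta_i=0$; and reachability of each $(A_i,B_i)$ upgrades $\xi_i\neq 0$ to $\eta_i\neq 0$ via PBH applied to the single block. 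Hence $\eta\neq 0$ and mutual left coprimeness fails.

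The converse is the substantive direction. Given a non-coprimeness witness $(z_0;\eta_1,\ldots,\eta_N)$, I need to lift each $\eta_i$ to a left eigenvector $\xi_i$ of $A_i$ at $z_0$ with $\xi_iB_i=\eta_i$. The assignment $\xi\mapsto\xi B_i$ from the left eigenspace of $A_i$ at $z_0$ into the left null space of $Q_i(z_0)$ is well-defined by the polynomial identity and injective by PBH for the reachable pair $(A_i,B_i)$; the main obstacle is surjectivity. I would handle this by a dimension count: right coprimeness of $P_iQ_i^{-1}$ makes the local Smith invariants of $Q_i$ at $z_0$ record the partial pole multiplicities of $T_i=P_iQ_i^{-1}$ at $z_0$, and minimality of the realization $(A_i,I_{n_i},B_i,0)$ matches these to the sizes of the Jordan blocks of $A_i$ at eigenvalue $z_0$. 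Consequently $m-\operatorname{rank}Q_i(z_0)$, which counts the nonzero partial multiplicities, equals the geometric multiplicity $n_i-\operatorname{rank}(z_0I-A_i)$. With equal dimensions the injective map becomes a bijection; assembling the $\xi_i$ (with $\xi_i=0$ whenever $\eta_i=0$) then produces a nonzero PBH witness against reachability of the parallel system, closing the contrapositive.
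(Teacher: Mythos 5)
The paper does not prove Proposition \ref{ab} at all; it is imported verbatim from \cite{Fu-He15}, where the argument runs through the polynomial-model/shift-realization framework (essentially the machinery behind Theorem \ref{mutcrit}). Your PBH-based proof is a genuinely different and, as far as I can check, correct route. The reformulation of mutual left coprimeness via the substitution $\eta_j=(-1)^{j-1}(\zeta_{j-1}+\zeta_j)$ is exactly the change of variables the paper itself uses later (in the proof of Lemma \ref{det} and in equations \eqref{above}), so that step is solid; the two easy directions (padding a single-block Hautus witness, and pushing a parallel PBH witness $\xi$ down to $\eta_i=\xi_iB_i$ using $B_iQ_i=(zI-A_i)P_i$ and single-block reachability to keep $\eta_i\neq 0$) are all fine. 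The only step that carries real weight is the surjectivity of $\xi\mapsto\xi B_i$ from the left kernel of $z_0I-A_i$ onto the left kernel of $Q_i(z_0)$, which you obtain from the dimension count $n_i-\operatorname{rk}(z_0I-A_i)=m-\operatorname{rk}Q_i(z_0)$. That identity is a standard consequence of the fact that for a right coprime factorization the local Smith form of $Q_i$ at $z_0$ coincides with the denominator structure of $T_i$, and that for the minimal realization $(A_i,B_i,I,0)$ this structure coincides with the partial multiplicities of $z_0$ as an eigenvalue of $A_i$; you invoke it rather than prove it, which is acceptable but is precisely the point where your argument quietly re-imports the polynomial-model theory that \cite{Fu-He15} uses directly. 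What your approach buys is a short, coordinate-level argument readable with only the Hautus test and Theorem \ref{mutcrit}; what the book's approach buys is that the dimension/multiplicity matching comes for free from the isomorphism between the state space and the polynomial model $X_{Q_i}$ rather than having to be cited as a black box.
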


Our aim is to count the number of reachable interconnections by counting possible coprime factorizations of the transfer functions of the node systems. Therefore, we need the following statements.

\begin{lemma}\label{Q}\ \\
Let $Q\in\mathbb F[z]^{m\times m}$ nonsingular be in Hermite form with $\deg(\det(Q(z)))=n$. Then, there are exactly $|GL_n(\mathbb F)|$ reachable pairs $(A,B)\in\mathbb F^{n\times n}\times\mathbb F^{n\times m}$ with $(zI-A)^{-1}B=P(z)Q(z)^{-1}$ for some $P\in\mathbb F[z]^{n\times m}$ such that $P$ and $Q$ are right coprime. In other words, there are exactly $|GL_n(\mathbb F)|$ polynomial matrices $P\in\mathbb F[z]^{n\times m}$ such that $P$ and $Q$ are right coprime and $PQ^{-1}$ could be written in the form $P(z)Q(z)^{-1}=(zI-A)^{-1}B$, where $(A,B)\in\mathbb F^{n\times n}\times\mathbb F^{n\times m}$ is reachable.
\end{lemma}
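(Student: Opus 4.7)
My plan is a counting argument combined with the free similarity action of $GL_n(\mathbb F)$ on reachable pairs.

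First, I would set up the map $\phi$ sending a reachable pair $(A,B)$ to the Hermite-form denominator $Q$ in the unique right coprime factorization $(zI-A)^{-1}B=PQ^{-1}$, which exists by Theorem \ref{form}. The similarity action $(A,B)\mapsto (SAS^{-1},SB)$ for $S\in GL_n(\mathbb F)$ sends the transfer function to $S(zI-A)^{-1}B=(SP)Q^{-1}$; because $S$ is a constant invertible matrix the pair $(SP,Q)$ is still right coprime and $Q$ is unchanged, so $\phi$ is constant on similarity orbits. The action is moreover free on reachable pairs: $SAS^{-1}=A$ and $SB=B$ yield $SA^kB=A^kB$ inductively, so $S$ fixes every column of the reachability matrix of rank $n$, forcing $S=I$. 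Together these observations show that every fibre $\phi^{-1}(Q)$ is a disjoint union of $GL_n(\mathbb F)$-orbits each of size exactly $|GL_n(\mathbb F)|$.

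Next I would establish surjectivity of $\phi$ via a shift (Fuhrmann) realization. Given $Q$ in Hermite form with $\deg\det Q=n$, the $\mathbb F$-vector space $M=\mathbb F[z]^m/Q\mathbb F[z]^m$ has dimension $n$; let $A$ be multiplication by $z$ on $M$ and let $B$ map the $i$-th standard basis vector of $\mathbb F^m$ to the class of $e_i$ in $M$. Reachability follows because the $\mathbb F$-span of $\bigcup_{k\geq 0}A^kB(\mathbb F^m)$ equals $M$, and a direct identification of the transfer function shows that the Hermite-form denominator of $(zI-A)^{-1}B$ is exactly $Q$.

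With surjectivity in hand the counting closes the argument. By Theorem \ref{THMA} the set of reachable pairs has cardinality $t^{-n^2-nm}P_{n,m}(t)$, and by Corollary \ref{card} this equals $H_{n,m}(\mathbb F)\cdot|GL_n(\mathbb F)|$. Since each of the $H_{n,m}(\mathbb F)$ fibres of $\phi$ is nonempty with size at least $|GL_n(\mathbb F)|$ and they sum to $H_{n,m}(\mathbb F)\cdot|GL_n(\mathbb F)|$, every fibre has exactly $|GL_n(\mathbb F)|$ elements. The equivalent ``in other words'' statement then follows because the assignment $(A,B)\mapsto P:=(zI-A)^{-1}B\cdot Q$ is injective on reachable pairs: equal transfer functions have equal Markov parameters $A^kB=(A')^kB'$, forcing $B=B'$ and inductively $A=A'$ on the reachable subspace $\mathbb F^n$, so the $|GL_n(\mathbb F)|$ reachable pairs correspond bijectively to $|GL_n(\mathbb F)|$ distinct matrices $P$.

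The main obstacle is the surjectivity step. Producing some reachable realization is easy, but verifying that the Hermite form of its denominator is exactly the prescribed $Q$---rather than merely unimodularly equivalent to it---requires the uniqueness of Hermite canonical form together with a careful identification of the shift realization's transfer function with $PQ^{-1}$. Sidestepping surjectivity by instead proving that distinct similarity classes of reachable pairs always produce distinct Hermite denominators appears no easier.
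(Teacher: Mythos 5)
Your proof is correct, but it reaches the upper bound ``at most $|GL_n(\mathbb F)|$'' by a genuinely different route than the paper. The paper gets both directions from Zaballa: existence of a reachable realization with denominator $Q$ (Proposition 2.3 of \cite{zab}) places one similarity orbit inside the fibre, and the state-space isomorphism theorem (Theorem 2.4(a) of \cite{zab}) shows that any two reachable realizations with the same Hermite denominator are similar, so the fibre is a single orbit. You replace the second ingredient by a global count: each fibre is a union of free orbits and hence has size a multiple of $|GL_n(\mathbb F)|$, surjectivity makes all $H_{n,m}(\mathbb F)$ fibres nonempty, and Theorem \ref{THMA} together with Corollary \ref{card} pins the total at exactly $H_{n,m}(\mathbb F)\cdot|GL_n(\mathbb F)|$, forcing each fibre to be a single orbit. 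This is valid within the paper's framework and buys independence from the uniqueness-of-realization theorem; the cost is reliance on the identity in Corollary \ref{card}, which in the cited source is itself obtained from the reachable-pairs/Hermite-forms correspondence, so your argument is non-circular only if that identity is treated as an external black box (as this paper does). Note also that you still need surjectivity: an empty fibre could be compensated by another fibre containing several orbits, so the shift-realization step cannot be dropped. On the positive side, your freeness and injectivity arguments make explicit what the paper silently assumes when it asserts the orbit has $|GL_n(\mathbb F)|$ elements. Finally, the ``main obstacle'' you flag is milder than you fear: once $P(z):=(zI-A)^{-1}B\,Q(z)$ is shown to be polynomial (each column of $Q$ lies in the kernel of the module map $f\mapsto\sum_k A^kBf_k$) and $(P,Q)$ to be right coprime (since $\deg\det Q=n$ equals the McMillan degree of the reachable and trivially observable realization $(A,B,I,0)$), the denominator is unimodularly equivalent to $Q$, and uniqueness of the Hermite canonical form then yields equality with $Q$ automatically.
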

\begin{proof}\ \\
According to Proposition 2.3 of \cite{zab}, there exist a reachable pair $(A,B)$ and a polynomial matrix $P$ that is right coprime to $Q$, such that $(zI-A)^{-1}B=P(z)Q(z)^{-1}$. Now, one considers the orbit of this pair $(A,B)$ under the similarity action on the state space, i.e. the set $\{(TAT^{-1},TB)\ |\ T\in GL_n(\mathbb F)\}$, which clearly consists only of reachable pairs. If $(zI-TAT^{-1})^{-1}TB=\tilde{P}(z)\tilde{Q}(z)^{-1}$ is a right coprime factorization of the transfer function with $\tilde{Q}$ in Hermite form, it follows from Theorem 2.4 a, of \cite{zab} that $Q=\tilde{Q}U$ with a unimodular matrix $U\in GL_n(\mathbb F[z])$. But since the Hermite form of a matrix is unique and $\tilde{Q}$ and $Q$ are both in Hermite form, one knows $\tilde{Q}=Q$. Thus, $Q$ leads to at least $|GL_n(\mathbb F)|$ reachable realizations $(A,B)$. On the other hand, the reverse direction of the statement of Theorem 2.4 a, of \cite{zab} shows that the right coprime factorizations $(zI-A_1)^{-1}B_1=P_1(z)Q(z)^{-1}$ and $(zI-A_2)^{-1}B_2=P_2(z)Q(z)^{-1}$ together with the reachability of $(A_1,B_1)$ and $(A_2,B_2)$ imply $(A_2,B_2)=(TA_1T^{-1},TB_1)$ for some $T\in GL_n(\mathbb F)$. Therefore, $Q$ leads to at most $|GL_n(\mathbb F)|$ reachable realizations $(A,B)$.
\end{proof}

\begin{lemma}\label{nn}\ \\
Let $(A,B)\in\mathbb F^{n\times n}\times\mathbb F^{n\times m}$ and $G(z)=(zI-A)^{-1}B=P(z)Q(z)^{-1}$ be the corresponding transfer function with $P\in\mathbb F[z]^{n\times m}, Q\in\mathbb F[z]^{m\times m}$, where $\det(Q)\not\equiv 0$. Then, the reachability of $(A,B)$ only depends on $P$.
\end{lemma}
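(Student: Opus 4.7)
The plan is to reduce reachability of $(A,B)$ to a condition on $P$ alone by exploiting the power series expansion of the transfer function at infinity. The key initial observation is the standard fact that $(A,B)$ is reachable if and only if the reachability matrix $[B \mid AB \mid \cdots \mid A^{n-1}B]$ has full row rank $n$, which via Cayley--Hamilton is equivalent to the assertion that no nonzero $v\in\mathbb F^n$ satisfies $vA^kB=0$ for all $k\geq 0$.

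Next I would translate this condition into one on $G(z)$ by expanding
$$G(z)=(zI-A)^{-1}B=\sum_{k\geq 0}A^kB\,z^{-k-1}.$$
Reading off the Laurent coefficients, $vG(z)\equiv 0$ as a rational function if and only if $vA^kB=0$ for every $k\geq 0$. Combining this with the previous step yields the intermediate characterization that $(A,B)$ is reachable if and only if there is no nonzero $v\in\mathbb F^n$ with $vG(z)\equiv 0$.

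Finally, I would invoke the factorization $G=PQ^{-1}$ together with the hypothesis $\det(Q)\not\equiv 0$, which makes $Q$ invertible as a matrix over $\mathbb F(z)$. Multiplying by $Q$ on the right shows that $vG\equiv 0$ is equivalent to $vP\equiv 0$ as polynomial row vectors. Hence $(A,B)$ is reachable if and only if $P(z)$ has trivial left annihilator in $\mathbb F^n$; equivalently, the coefficient matrix $[P_0\mid P_1\mid\cdots\mid P_d]$ obtained from $P(z)=\sum_i P_iz^i$ has full row rank $n$. Since this condition involves only $P$, the lemma follows.

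No step in this plan looks genuinely hard; the argument is essentially a repackaging of standard realization theory. The only point requiring care is to keep the two different notions of vanishing straight: $vP\equiv 0$ as an identity in $z$ (equivalent to $vP_i=0$ for every coefficient) is what we need, and it is strictly stronger than $vP(z_0)=0$ at a single point, so the reduction from $vG\equiv 0$ to $vP\equiv 0$ via invertibility of $Q$ over $\mathbb F(z)$ must be performed over the rational function field rather than pointwise.
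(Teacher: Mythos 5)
Your proposal is correct and follows essentially the same route as the paper's proof: the Kalman rank test combined with Cayley--Hamilton, the expansion $(zI-A)^{-1}B=\sum_{k\geq 0}A^kBz^{-k-1}$ to show that reachability is equivalent to $vG\equiv 0$ having only the trivial solution, and then passing from $vG\equiv 0$ to $vP\equiv 0$ via the invertibility of $Q$ over $\mathbb F(z)$. Your closing remark about distinguishing identical vanishing from pointwise vanishing is a sensible precaution but introduces nothing beyond what the paper's argument already does implicitly.
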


\begin{proof}\ \\
By the well-known Kalman test, system $(A,B)$ is reachable if and only if $c=0$ is the only solution of $cA^iB=0$ for $0\leq i\leq n-1$ with $c^{\top}\in\mathbb F^n$. Note that $cA^iB=0$ for $0\leq i\leq n-1$ implies $cA^iB=0$ for $i\geq 0$ by the theorem of Cayley-Hamilton.
Since $(zI-A)^{-1}=\sum_{i=0}^{\infty}\frac{A^i}{z^{i+1}}$,
reachability is equivalent to the fact that $c=0$ is the only solution
of $c(zI-A)^{-1}B\equiv 0$ with $c^{\top}\in\mathbb F^n$. This means that $cP\equiv 0$ for $c^T\in\mathbb F^n$ implies $c=0$, which is a criterion that only depends on $P$.
\end{proof}
Now, we are ready to prove the main theorem of this section:
\begin{theorem}\label{prodws}\ \\
The probability that the parallel connected system given by \eqref{system} is reachable is 
$$\prod_{i=1}^N\prod_{j=m}^{n_i+m-1}(1-t^{j})\cdot P_m(N),$$
where $P_m(N)$ is the probability that $N$ polynomial matrices from $\mathbb F[z]^{m\times m}$ in Hermite form are mutually left coprime.
\end{theorem}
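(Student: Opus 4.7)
The plan is to count reachable parallel connections directly and then divide by $\prod_{i=1}^N |\mathbb{F}^{n_i\times n_i}\times\mathbb{F}^{n_i\times m}| = \prod_{i=1}^N t^{-n_i^2-n_im}$. By Proposition \ref{ab}, a tuple $(A_1,B_1,\ldots,A_N,B_N)$ yields a reachable parallel connection if and only if each pair $(A_i,B_i)$ is reachable \emph{and} the denominator matrices $Q_1,\ldots,Q_N$ from right coprime factorizations $(zI-A_i)^{-1}B_i = P_i(z)Q_i(z)^{-1}$ are mutually left coprime. Using the uniqueness of the Hermite canonical form, we may pin down each $Q_i$ to be the unique Hermite form in its unimodular equivalence class; in particular, mutual left coprimeness of the $Q_i$ only depends on this canonical choice, not on the concrete realization $(A_i,B_i)$.

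Next I would invoke Lemma \ref{Q}: for any nonsingular $Q\in\mathbb{F}[z]^{m\times m}$ in Hermite form with $\deg\det Q = n$, there are exactly $|GL_n(\mathbb{F})|$ reachable realizations $(A,B)\in\mathbb{F}^{n\times n}\times\mathbb{F}^{n\times m}$ whose associated Hermite denominator is $Q$. Consequently, the count of reachable tuples breaks as a product: fix an $N$-tuple $(Q_1,\ldots,Q_N)$ of Hermite forms with $\deg\det Q_i = n_i$ that are mutually left coprime, then fill in each $(A_i,B_i)$ in $|GL_{n_i}(\mathbb{F})|$ ways independently. The total number of reachable parallel connections is therefore
\[
\left(\prod_{i=1}^N |GL_{n_i}(\mathbb{F})|\right)\cdot \#\{(Q_1,\ldots,Q_N)\in X(n_1,\ldots,n_N) : Q_i \text{ mutually left coprime}\}.
\]

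The second factor equals $P_m(N)\cdot\prod_{i=1}^N H_{n_i,m}(\mathbb{F})$ by the definition of $P_m(N)$ (cf.\ Theorem \ref{mut}) together with $|X(n_1,\ldots,n_N)| = \prod_i H_{n_i,m}(\mathbb{F})$. Combining with the expressions for $|GL_{n_i}(\mathbb{F})|$ from Lemma \ref{glcar} and for $H_{n_i,m}(\mathbb{F})$ from Corollary \ref{card}, I get
\[
|GL_{n_i}(\mathbb{F})|\cdot H_{n_i,m}(\mathbb{F}) \;=\; t^{-n_i^2 - n_i m}\,P_{n_i,m}(t).
\]
Dividing the count of reachable tuples by the total count $\prod_i t^{-n_i^2-n_im}$ cancels the powers of $t$ and leaves the probability $\prod_{i=1}^N P_{n_i,m}(t)\cdot P_m(N) = \prod_{i=1}^N\prod_{j=m}^{n_i+m-1}(1-t^j)\cdot P_m(N)$, as claimed.

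The step that requires the most care is the independence of the two factors, namely that the number of reachable realizations associated with a given Hermite denominator $Q_i$ is always the same value $|GL_{n_i}(\mathbb{F})|$ regardless of which particular $Q_i$ (and hence which compatible $P_i$'s) arise. This is exactly the content of Lemma \ref{Q}, and it is what permits the clean multiplicative decomposition; the rest is bookkeeping with the already established counting formulas.
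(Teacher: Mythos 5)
Your proposal is correct and follows essentially the same route as the paper: both rest on Proposition \ref{ab} together with Lemma \ref{Q}, which shows that the reachable pairs $(A_i,B_i)$ fiber over the Hermite denominators $Q_i$ with constant fiber size $|GL_{n_i}(\mathbb F)|$, so the coprimeness probability over Hermite forms transfers directly. Your version makes the counting fully explicit (and thereby avoids the paper's appeal to Lemma \ref{nn} for the independence of conditions (a) and (b)), but the underlying decomposition is the same.
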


\begin{proof}\ \\
For $i=1,\hdots,N$, consider right coprime factorizations $(zI-A_i)^{-1}B_i=P_i(z)Q_i(z)^{-1}$.
From Theorem \ref{form} (a) one knows that $\deg(\det(Q_i))=n_i$ for $i=1,\hdots,N$ and from Theorem \ref{form} (b) that one could assume that the polynomial matrices $Q_1,\hdots,Q_N$ are in Hermite form. According to Lemma \ref{Q}, for every such $Q_i$, there exist exactly $|GL_{n_i}(\mathbb F)|$ reachable pairs $(A_i,B_i)$. Therefore, the probability that condition (b) of Proposition \ref{ab} is fulfilled is equal to the probability that arbitrary polynomial matrices $Q_i$ (in Hermite form) with $\deg(\det(Q_i))=n_i$ for $i=1,\hdots,N$ are mutually left coprime. Since this condition only depends on the matrices $Q_i$ and according to Lemma \ref{nn}, the reachability of the node systems only depends on $P_i$, one could just multiply the probability of mutual left coprimeness with the probabilities that the node systems are reachable (see Theorem \ref{THMA} for the corresponding formula).
\end{proof}

\begin{remark}\ \\
Since the reachability of the parallel connection of $(A_i,B_i,C_i,D_i)$ is independent of $(C_i,D_i)$ for $i=1,\hdots,N$, the formula of the preceding theorem is also valid if $(C_i,D_i)$ are chosen randomly and are not fixed to $(I,0)$ as in \eqref{system}.
\end{remark}

Finally, we obtain an asymptotic formula for the probability of reachability for a parallel connection.

\begin{theorem}\label{parallel}\ \\
The probability that the parallel connection of $N$ linear systems with $m$ inputs is reachable is equal to
$$1-\sum_{y=1}^{m+1}\binom{N}{y}t^m+O(t^{m+1}).$$
\end{theorem}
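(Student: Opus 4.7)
The plan is to combine the exact formula from Theorem \ref{prodws} with the asymptotic expansions from Theorems \ref{THMA} and \ref{mut}, and simply expand the resulting product up to order $t^{m+1}$. There is no real obstacle here; everything needed has already been established.

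First I would invoke Theorem \ref{prodws} to write the probability of reachability of the parallel connection as
$$\mathcal{P}=\left(\prod_{i=1}^N\prod_{j=m}^{n_i+m-1}(1-t^j)\right)\cdot P_m(N).$$
By Theorem \ref{THMA}, the $i$-th inner product is $P_{n_i,m}(t)=1-t^m+O(t^{m+1})$ (the lowest-order correction comes from the $j=m$ factor, all higher factors contribute to $O(t^{m+1})$). Multiplying the $N$ independent factors, one gets
$$\prod_{i=1}^N\left(1-t^m+O(t^{m+1})\right)=1-Nt^m+O(t^{m+1})=1-\binom{N}{1}t^m+O(t^{m+1}).$$

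Second, I would apply Theorem \ref{mut} to obtain
$$P_m(N)=1-\sum_{y=2}^{m+1}\binom{N}{y}t^m+O(t^{m+1}).$$

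Finally, I would multiply these two expansions and collect terms of order $t^m$, noting that the cross term of the two $O(t^m)$ contributions is already $O(t^{2m})\subseteq O(t^{m+1})$ (since $m\geq 1$). This gives
$$\mathcal{P}=1-\binom{N}{1}t^m-\sum_{y=2}^{m+1}\binom{N}{y}t^m+O(t^{m+1})=1-\sum_{y=1}^{m+1}\binom{N}{y}t^m+O(t^{m+1}),$$
which is the claimed formula. The only thing to be slightly careful about is confirming that the coefficient $\binom{N}{1}=N$ coming from the reachability factors combines cleanly with the binomial sum from mutual left coprimeness to extend the range of summation from $y\geq 2$ down to $y\geq 1$; this is just a cosmetic rewriting.
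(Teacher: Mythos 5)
Your proposal is correct and matches the paper's own proof exactly: it substitutes the expansions from Theorems \ref{THMA} and \ref{mut} into the product formula of Theorem \ref{prodws} and collects the order-$t^m$ terms, absorbing the cross term into $O(t^{m+1})$. No gaps.
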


\begin{proof}\ \\
Inserting the formula of Theorem \ref{mut} into Theorem \ref{prodws}, leads to 
\begin{align*}
&\prod_{i=1}^N\prod_{j=m}^{n_i+m-1}(1-t^{j})\cdot P_m(N)=\\
&=(1-N\cdot t^m+O(t^{m+1}))\cdot \left(1-\sum_{y=2}^{m+1}\binom{N}{y}t^m+O(t^{m+1})\right)=\\
&=1-\sum_{y=1}^{m+1}\binom{N}{y}t^m+O(t^{m+1}).
\end{align*}
\end{proof}


\subsection{Non-Catastrophic Convolutional Codes}
\label{sec:2}

In this final section, we want to transfer the results of the preceeding sections to convolutional codes. Therefore, we start with a short introduction about convolutional codes and their correlation with linear systems.

\begin{definition}\ \\
A \textbf{convolutional code} $\mathfrak{C}$ of \textbf{rate} $k/n$ is a free $\mathbb F[z]$-submodule of $\mathbb F[z]^n$ of rank $k$.
Hence, there exists $G\in\mathbb F[z]^{n\times k}$ of full column rank such that
$$\mathfrak{C}=\{v\in\mathbb F[z]^n\ |\ v(z)=G(z)m(z)\ \text{for some}\ m\in\mathbb F[z]^k\}.$$
$G$ is called \textbf{generator matrix} of the code and is unique up to right multiplication with a unimodular matrix $U\in Gl_k(\mathbb F[z])$.
\end{definition}

\begin{definition}\ \\
Let $\nu_1, \hdots, \nu_k$ be the column degrees of $G\in\mathbb F[z]^{n\times k}$. Then, $\nu:=\nu_1+\cdots+\nu_k$ is called the \textbf{order} of $G$. The \textbf{degree} $\delta$ of a convolutional code $\mathfrak{C}$ is defined as the minimal order of its generator matrices. Equivalently, one could define the degree of $\mathfrak{C}$ as the maximal degree of the $k\times k$-minors of one and hence, each generator matrix of $\mathfrak{C}$.
\end{definition}


\begin{theorem}\label{nude}\ \\
It holds $\nu=\delta$, i.e. $G$ is a minimal basis of $\mathfrak{C}$, if and only if $G$ is column proper.
\end{theorem}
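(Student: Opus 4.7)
The plan is to exploit the equivalent description of $\delta$ as the maximal degree of the $k\times k$-minors of any generator matrix of $\mathfrak{C}$, combined with the predictable-degree behaviour enjoyed by column proper matrices. A preliminary observation, valid for every generator matrix $G$, is the easy inequality $\delta\leq\nu(G)$: expanding any $k\times k$-minor $\det(G_I)$ as a sum over permutations shows each term to be a product of one entry from each column of $G$, hence of degree at most $\nu_1+\cdots+\nu_k=\nu$.

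For the forward implication I assume $G$ is column proper. The key step is to verify that the degree-$\nu$ coefficient of $\det(G_I)$ coincides with $\det([G]_{hc,I})$, the corresponding $k\times k$-minor of the highest column degree coefficient matrix. Since $[G]_{hc}\in\mathbb F^{n\times k}$ has full column rank $k$ by assumption, some such subminor is nonzero, so $\det(G_I)$ attains degree exactly $\nu$ for an appropriate row index set $I$. This gives $\delta\geq\nu$, which together with the easy inequality yields $\delta=\nu$.

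For the converse I will argue the contrapositive: if $G$ is not column proper, then $\nu>\delta$. Non-properness provides scalars $c_1,\dots,c_k\in\mathbb F$, not all zero, with $\sum_{i=1}^k c_i[g_i]_{hc}=0$, where $g_i$ denotes the $i$-th column of $G$. Choose $j_0$ with $c_{j_0}\neq 0$ and $\nu_{j_0}=\max\{\nu_i:c_i\neq 0\}$, and replace the $j_0$-th column of $G$ by $\sum_{i:c_i\neq 0} c_iz^{\nu_{j_0}-\nu_i}g_i$. This amounts to right multiplication by the matrix $U\in\mathbb F[z]^{k\times k}$ equal to the identity outside column $j_0$, whose $j_0$-th column has $i$-th entry $c_iz^{\nu_{j_0}-\nu_i}$; note that the exponents are nonnegative by maximality of $\nu_{j_0}$. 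A routine cofactor expansion gives $\det U=c_{j_0}\in\mathbb F^{\ast}$, so $U\in Gl_k(\mathbb F[z])$ and $G':=GU$ generates the same code $\mathfrak{C}$. The choice of $j_0$ together with the dependency relation forces the coefficient of $z^{\nu_{j_0}}$ in the new $j_0$-th column to vanish, while the remaining column degrees are unchanged. Hence $\nu(G')<\nu$, so $\delta\leq\nu(G')<\nu$.

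The main obstacle is the leading coefficient identity used in the forward direction: it requires carefully checking that in the permutation expansion of $\det(G_I)$, the contributions of degree exactly $\nu$ assemble precisely into $\det([G]_{hc,I})$ and that lower-order interactions between entries cannot push some minor's degree above $\nu$. The construction in the converse is then a standard, if somewhat technical, unimodular column operation, where the subtlety is verifying that the specified exponents are nonnegative and that the resulting matrix truly has strictly smaller order.
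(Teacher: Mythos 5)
The paper states Theorem \ref{nude} without proof (it is the classical Forney minimal-basis criterion, quoted as known), so there is no in-paper argument to compare against; your proposal supplies the standard proof, and it is correct. The two points you flag as obstacles are both routine: in the permutation expansion of $\det(G_I)$ each term $\pm\prod_j g_{\sigma(j),j}$ has degree at most $\sum_j\nu_j=\nu$ and contributes exactly $\pm\prod_j[g_{\sigma(j),j}]$ to the coefficient of $z^{\nu}$, so that coefficient is precisely $\det([G]_{hc,I})$ and no minor can exceed degree $\nu$; and in the converse the exponents $\nu_{j_0}-\nu_i$ are nonnegative by your choice of $j_0$, $\det U=c_{j_0}\in\mathbb F^{\ast}$ since $U$ is the identity outside column $j_0$, and the $z^{\nu_{j_0}}$-coefficient of the new column is $\sum_i c_i[g_i]_{hc}=0$, so the order strictly drops (the new column cannot vanish identically because $GU$ still has full column rank). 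Two small points worth making explicit: the paper only defines ``column proper'' for square matrices, so for $G\in\mathbb F[z]^{n\times k}$ you should state that you read it as $[G]_{hc}$ having full column rank $k$; and your forward direction leans on the paper's asserted equivalence between the two definitions of $\delta$ (minimal order versus maximal full-size minor degree) --- if you wanted to avoid that, it suffices to observe that full-size minors of unimodularly equivalent generator matrices differ only by the nonzero constant $\det U$, so the maximal minor degree is a code invariant bounded above by the order of every generator matrix and attained by yours.
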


\begin{definition}\ \\
A convolutional code $\mathfrak{C}$ is called \textbf{non-catastrophic} if one and therefore, each of its generator matrices is right prime.
\end{definition}

In the following, it should be explained, how one could construct a convolutional code based on a linear system (see \cite{RY1999}). To this end, we start with a linear system $(A,B,C,D)\in\mathbb F^{s\times s}\times\mathbb F^{s\times k}\times\mathbb F^{(n-k)\times s}\times\mathbb F^{(n-k)\times k}$ and define $$H(z):=\left[\begin{array}{ccc}
zI-A & 0_{s\times(n-k)} & -B \\ 
-C & I_{n-k} & -D
\end{array}\right].$$
The set of $\begin{pmatrix} y\\u\end{pmatrix}\in\mathbb F[z]^n$ with $y\in\mathbb F[z]^{n-k}$ and $u\in\mathbb F[z]^{k}$ for which there exists $x\in\mathbb F[z]^s$ with
$H(z)\cdot[x(z)\ y(z)\ u(z)]^{\top}=0$
forms a submodule of $\mathbb F[z]^n$ of rank $k$ and thus, a convolutional code of rate $k/n$, which is denoted by $\mathfrak{C}(A,B,C,D)$. Moreover, if one writes $x(z)=x_0z^{\gamma}+\cdots+x_{\gamma}$,  $y(z)=y_0z^{\gamma}+\cdots+y_{\gamma}$ and $u(z)=u_0z^{\gamma}+\cdots+u_{\gamma}$ with $\gamma=\max(\deg(x),\deg(y),\deg(u))$, it holds
\begin{align*}
x_{\tau +1}&=Ax_{\tau}+Bu_{\tau} \\
y_{\tau}&=Cx_{\tau}+Du_{\tau}\\
(x_{\tau}, y_{\tau}, u_{\tau})&=0\ \text{for}\ \tau>\gamma.
\end{align*}
Furthermore, there exist $X\in\mathbb F[z]^{s\times k}, Y\in\mathbb F[z]^{(n-k)\times k}, U\in\mathbb F[z]^{k\times k}$ such that $\operatorname{ker}(H(z))=\operatorname{im}[X(z)^{\top}\ Y(z)^{\top}\ U(z)^{\top}]^{\top}$ and $G(z)=\begin{pmatrix}
Y(z)\\U(z)\end{pmatrix}$ is a generator matrix for $\mathfrak{C}$ with $C(zI-A)^{-1}B+D=Y(z)U(z)^{-1}$.\\
Conversely, for each convolutional code $\mathfrak{C}$ of rate $k/n$ and degree $\delta$, there exists $(A,B,C,D)\in\mathbb F^{s\times s}\times\mathbb F^{s\times k}\times\mathbb F^{(n-k)\times s}\times\mathbb F^{(n-k)\times k}$ with $s\geq\delta$ such that $\mathfrak{C}=\mathfrak{C}(A,B,C,D)$.
Moreover, it is always possible to choose $s=\delta$. In this case, one calls $(A,B,C,D)$ a \textbf{minimal representation} of $\mathfrak{C}$.

\begin{theorem}\cite{RY1999}\ \\
$(A,B,C,D)$ is a minimal representation of $\mathfrak{C}(A,B,C,D)$ if and only if it is reachable.
\end{theorem}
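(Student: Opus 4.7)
The plan is to relate the state dimension $s$ to the code degree $\delta(\mathfrak{C})$ via a right coprime factorization of the strictly proper transfer function $(zI-A)^{-1}B$. First I would unwind the kernel condition $H(z)\,[x\ y\ u]^{\top}=0$ into the two polynomial identities $(zI-A)x=Bu$ and $y=Cx+Du$, so that $(y,u)\in\mathbb F[z]^n$ lies in $\mathfrak{C}(A,B,C,D)$ iff a polynomial $x$ solves the first one. Choose a right coprime factorization $(zI-A)^{-1}B=P(z)Q(z)^{-1}$ with $Q\in\mathbb F[z]^{k\times k}$ in Kronecker-Hermite canonical form (Theorem \ref{form}(b)). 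A Bezout identity $X_1P+X_2Q=I_k$ shows that $(zI-A)^{-1}Bu=PQ^{-1}u$ is polynomial iff $u=Qv$ for some $v\in\mathbb F[z]^k$, in which case $x=Pv$ and $y=(CP+DQ)v$. Hence
\[
G(z):=\begin{pmatrix}CP(z)+DQ(z)\\ Q(z)\end{pmatrix}
\]
is a generator matrix for $\mathfrak{C}(A,B,C,D)$.

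Next I would show $\delta(\mathfrak{C})=\deg(\det Q)$. Since $(zI-A)^{-1}B$ is strictly proper, Lemma \ref{degn} applied to the auxiliary realization $(A,B,I_s,0)$ provides the strict column-degree inequalities $\deg_j P<\deg_j Q=:\nu_j$, which in turn force $\deg_j(CP+DQ)\leq\nu_j$. The highest column degree coefficient matrix of $G$ therefore has the block form $[G]_{hc}=\begin{pmatrix}D[Q]_{hc}\\ [Q]_{hc}\end{pmatrix}$, which has full column rank because $[Q]_{hc}\in Gl_k(\mathbb F)$. Thus $G$ is column proper with column degrees $\nu_1,\ldots,\nu_k$, and Theorem \ref{nude} yields $\delta(\mathfrak{C})=\nu_1+\cdots+\nu_k=\deg(\det Q)$, the last equality coming from column properness of $Q$ itself.

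Finally I would invoke Theorem \ref{form}(a) to identify $\deg(\det Q)$ with the McMillan degree of $(zI-A)^{-1}B$. Since the output map $I_s$ is trivially observable, the realization $(A,B,I_s,0)$ is Kalman-minimal iff $(A,B)$ is reachable, in which case its state dimension $s$ equals that McMillan degree; otherwise a Kalman decomposition produces a reachable realization $(\tilde A,\tilde B)$ of smaller dimension $\tilde s<s$ with the same transfer function, forcing $\deg(\det Q)=\tilde s<s$. Either way, $\deg(\det Q)=s$ holds precisely when $(A,B)$ is reachable. Combining the three steps gives $s=\delta(\mathfrak{C})$ iff $(A,B)$ is reachable, which is the theorem.

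The most delicate point I foresee is the column-properness argument, which rests on the \emph{strict} inequality $\deg_j P<\deg_j Q$; without strictness the top block $CP+DQ$ could raise the $j$-th column degree of $G$ beyond $\nu_j$ and destroy the clean identification $\delta(\mathfrak{C})=\deg(\det Q)$. Lemma \ref{degn} supplies exactly what is needed, but one must be careful to apply it to the auxiliary $D=0$ realization $(A,B,I_s,0)$ rather than to $(A,B,C,D)$ directly, whose $D$ is not assumed to vanish.
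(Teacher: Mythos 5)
Your proof is correct, but note that the paper does not actually prove this statement: it is imported verbatim from \cite{RY1999} as a citation, so there is no in-paper argument to compare against. What you have written is a self-contained derivation built from the paper's own toolbox, and it hangs together: the Bezout argument correctly identifies $\mathfrak{C}(A,B,C,D)$ with the column module of $G=\bigl(\begin{smallmatrix}CP+DQ\\ Q\end{smallmatrix}\bigr)$; the column-properness of $G$ does follow from the strict inequality $\deg_jP<\deg_jQ$ of Lemma \ref{degn} applied to the strictly proper $(zI-A)^{-1}B$ (your remark that one must use the auxiliary realization $(A,B,I_s,0)$ with vanishing feedthrough, not $(A,B,C,D)$ itself, is exactly the right point of care); and Theorem \ref{form}(a) plus the observability of the identity output map closes the loop between $\deg(\det Q)$, the McMillan degree, and reachability of $(A,B)$. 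Two dependencies are worth flagging explicitly, since the paper states them without proof and you lean on them: Theorem \ref{nude} (minimality of the order iff column properness), and the identification of the McMillan degree $\delta(T)$ with the dimension of a Kalman-minimal realization, which the paper never defines but which your last step requires. Both are standard, so this is a presentational caveat rather than a gap. A small simplification is available in your middle step: once $\deg_jG=\deg_jQ=\nu_j$ for all $j$, the characterization of $\delta(\mathfrak{C})$ as the maximal degree of the $k\times k$ minors of $G$ gives $\delta(\mathfrak{C})=\deg(\det Q)$ directly (every minor has degree at most $\sum_j\nu_j$ and the minor $\det Q$ attains it by column properness of $Q$), bypassing Theorem \ref{nude} and the full-column-rank computation of $[G]_{hc}$.
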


\begin{theorem}\cite{RY1999}\label{cor}\ \\
Assume that $(A,B,C,D)$ is reachable. Then $\mathfrak{C}(A,B,C,D)$ is non-catastrophic if and only if $(A,B,C,D)$ is observable.
\end{theorem}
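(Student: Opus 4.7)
The plan is to use the pointwise-rank characterization of right primeness: the generator matrix $G = \begin{pmatrix} Y \\ U \end{pmatrix}$ is right prime (equivalently, $\mathfrak{C}(A,B,C,D)$ is non-catastrophic) if and only if $\operatorname{rk}(G(z_0)) = k$ for every $z_0 \in \overline{\mathbb{F}}$. I would first set up two preparatory facts: (i) reachability of $(A,B)$ together with the identity block $I_{n-k}$ inside $H(z)$ forces $H(z_0)$ to have full row rank $s+(n-k)$ at every $z_0 \in \overline{\mathbb{F}}$, via the PBH criterion applied to $[z_0 I - A,\, -B]$, so that $\ker H(z_0)$ has dimension exactly $k$; and (ii) the polynomial matrix $M := \begin{pmatrix} X \\ Y \\ U \end{pmatrix}$ whose image equals $\ker H$ is right prime, so $M(z_0)$ has rank $k$ and its columns span $\ker H(z_0)$ over $\overline{\mathbb{F}}$ for every $z_0$.

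For the direction \emph{observable $\Rightarrow$ non-catastrophic} I would argue by contrapositive. Assume $G(z_0) v = 0$ for some $z_0 \in \overline{\mathbb{F}}$ and $v \neq 0$. The polynomial identities $H \cdot M = 0$ read $(zI-A) X = B U$ and $Y = C X + D U$; evaluating at $z_0$ and multiplying by $v$ gives
$$(z_0 I - A)\, X(z_0) v \;=\; B\, U(z_0) v \;=\; 0, \qquad C\, X(z_0) v \;=\; Y(z_0) v - D\, U(z_0) v \;=\; 0.$$
The PBH observability criterion then forces $X(z_0) v = 0$, so $M(z_0) v = 0$ with $v \neq 0$, contradicting (ii).

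For the converse, again by contrapositive: if $(A,C)$ is unobservable, choose $z_0 \in \overline{\mathbb{F}}$ and $w \neq 0$ with $(z_0 I - A) w = 0$ and $Cw = 0$. Then $(w^\top, 0, 0)^\top$ lies in $\ker H(z_0)$, so by (ii) there exists $v \in \overline{\mathbb{F}}^k$ with $M(z_0) v = (w^\top, 0, 0)^\top$; the nonzero first block forces $v \neq 0$, and then $G(z_0) v = 0$ witnesses that $G$ is not right prime, i.e.\ $\mathfrak{C}$ is catastrophic.

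The main obstacle is fact (ii), namely that the kernel representation $M$ is itself right prime; this is a Forney-type statement about minimal polynomial bases for kernels of full-row-rank polynomial matrices, and while it is essentially automatic once (i) is available and $M$ is taken to be a basis of the free $\mathbb{F}[z]$-module $\ker H$, it deserves careful verification in this setup (for instance by exhibiting a left inverse $N$ with $NM = I_k$ built from the realization data).
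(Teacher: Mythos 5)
The paper does not prove this statement at all --- it is imported verbatim from \cite{RY1999} --- so there is no in-paper argument to compare against; your proposal has to stand on its own, and it does. Both directions are sound: the identities $(zI-A)X=BU$ and $Y=CX+DU$ follow from $HM=0$, the PBH tests for reachability and observability over $\overline{\mathbb F}$ are the right tools, and the dimension count in (i) is correct ($H(z_0)$ has full row rank $s+(n-k)$ because the $I_{n-k}$ block decouples the bottom rows and $[z_0I-A,\,-B]$ has rank $s$ by reachability, so $\dim\ker H(z_0)=k$). The one gap you flag, fact (ii), is real but closes cleanly and does not actually need (i): the quotient $\mathbb F[z]^{s+n}/\ker H$ embeds via $H$ into $\mathbb F[z]^{s+n-k}$, hence is torsion-free, hence free over the PID $\mathbb F[z]$; therefore $\ker H$ is a direct summand of $\mathbb F[z]^{s+n}$, any basis matrix $M$ of it extends to a unimodular matrix, and $M$ is right prime. (Since the paper presents the columns of $M$ as a generating set of the free rank-$k$ module $\ker H$, they are automatically a basis.) What (i) \emph{is} needed for is the spanning claim $\operatorname{im}M(z_0)=\ker H(z_0)$, used only in the converse direction to lift $(w^\top,0,0)^\top$ to a vector $v$ with $M(z_0)v=(w^\top,0,0)^\top$; there the combination $\operatorname{rk}M(z_0)=k=\dim\ker H(z_0)$ does the work. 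With that observation the argument is complete and is, in substance, the classical kernel-representation proof one finds in the systems-theoretic literature behind \cite{RY1999}.
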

Since a convolutional code might have different realizations, partly minimal and partly not, we will need the following theorem to be able to compute the probability of non-catastrophicity for a convolutional code.

\begin{theorem}\label{conj}\ \\
If $(A,B,C,D)$ is a minimal representation of a convolutional code $\mathfrak{C}$, the set of all minimal representations of $\mathfrak{C}$ is given by $\{(SAS^{-1},SB,CS^{-1},D)\ |\ S\in Gl_{\delta}(\mathbb F)\}$.
\end{theorem}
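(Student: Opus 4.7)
The plan is to reduce to the classical Kalman uniqueness theorem for minimal realizations of a transfer function: any two reachable and observable realizations of the same proper rational matrix, with the same state dimension, differ by a unique state-space similarity.

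The $\supseteq$ inclusion is the easy half. Given $S \in Gl_{\delta}(\mathbb{F})$, the bijective change of state variables $x \mapsto Sx$ sends solutions of $H(z)[x,y,u]^\top = 0$ defining $\mathfrak{C}(A,B,C,D)$ to solutions of the corresponding equation for $(SAS^{-1}, SB, CS^{-1}, D)$, so the two codes coincide. Similarity preserves reachability and the state dimension, so the conjugate quadruple is again a minimal representation of $\mathfrak{C}$.

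For the $\subseteq$ direction, let $(A', B', C', D')$ be a second minimal representation. First I would show that the transfer function $T(z) := C(zI-A)^{-1}B + D$ is an invariant of the code: the construction preceding the theorem attaches to every representation a generator matrix $\binom{Y}{U}$ with $T = YU^{-1}$, and since generator matrices are unique up to right multiplication by a unimodular matrix, the quotient $YU^{-1}$ depends only on $\mathfrak{C}$. Hence $(A',B',C',D')$ realizes the same $T$, and in particular $D' = \lim_{z \to \infty} T(z) = D$.

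The technical heart is to upgrade ``reachable'' to ``reachable and observable'' for any minimal representation, by establishing $\delta = \delta(T)$. Taking a right coprime factorization $T = PQ^{-1}$ with $Q$ in Kronecker-Hermite form, Theorem \ref{form}(a) together with the column properness of $Q$ gives $\delta(T) = \deg\det Q = \sum_j \deg_j Q$. Lemma \ref{degn} yields $\deg_j P \le \deg_j Q$, which forces $\binom{P}{Q}$ to have the same column degrees as $Q$ and to be column proper; so Theorem \ref{nude} identifies its order $\sum_j \deg_j Q$ with the code degree $\delta$. Since a realization whose state dimension equals the McMillan degree is automatically reachable \emph{and} observable, both $(A,B,C)$ and $(A',B',C')$ are reachable and observable realizations of the same $T$ with state dimension $\delta$, and the classical minimal realization theorem delivers a unique $S \in Gl_{\delta}(\mathbb{F})$ with $A' = SAS^{-1}$, $B' = SB$, $C' = CS^{-1}$. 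I expect the main obstacle to be precisely the identification $\delta = \delta(T)$, which bridges the code-theoretic and system-theoretic notions of degree via column properness and Lemma \ref{degn}.
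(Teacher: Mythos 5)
Your $\supseteq$ direction and the observation that $T(z)=C(zI-A)^{-1}B+D$ (and hence $D$) is an invariant of the code are fine, but the step you yourself single out as the technical heart --- the identity $\delta=\delta(T)$ --- is false in general, and with it the claim that every minimal representation is observable. The generator matrix attached to a representation is $G=\binom{Y}{U}$ with $T=YU^{-1}$, and $Y,U$ need \emph{not} be right coprime: by Theorem \ref{cor} they fail to be precisely when the code is catastrophic, i.e. precisely when the (reachable) minimal representation is not observable. In that case the right coprime factorization $T=PQ^{-1}$ satisfies $U=QR$ with $R$ a nonunimodular common right divisor, so $\delta(T)=\deg\det Q<\deg\det U=\delta$; your application of Theorem \ref{nude} to $\binom{P}{Q}$ computes the degree of the larger, non-catastrophic code generated by $\binom{P}{Q}$, not of $\mathfrak{C}$. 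A concrete instance: $(A,B,C,D)=(a,1,0,0)$ with $s=k=n-k=1$ represents the code generated by $(0,\ z-a)^{\top}$, which has $\delta=1$, while $T\equiv 0$ has McMillan degree $0$.

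Once observability is lost, the reduction to the classical Kalman state-space isomorphism theorem collapses: two reachable realizations of the same transfer function with the same state dimension need not be similar (the quadruples $(a,1,0,0)$ and $(a',1,0,0)$ both realize $T\equiv 0$ but generate different codes), and indeed the transfer function does not determine the code --- the correct invariant is the possibly non-coprime pair $(Y,U)$ up to a unimodular right factor, equivalently the kernel representation $H(z)$ itself. This is why the paper instead invokes the isomorphism theorem for first-order representations (Theorem 3.4 of \cite{ros}) applied to the pencil data $K=\binom{-I}{0}$, $L=\binom{A}{C}$, $M=\left[\begin{smallmatrix}0&B\\-I&D\end{smallmatrix}\right]$ --- a uniqueness statement at the level of the code rather than of $T$ --- and then checks that the resulting transformation pair $(T,S)$ is forced into the restricted form $(SAS^{-1},SB,CS^{-1},D)$. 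Your argument does go through for non-catastrophic codes, but the theorem is needed (and is used in the subsequent probability count) for all codes, so the gap is essential.
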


\begin{proof}\ \\
Clearly, $(SAS^{-1},SB,CS^{-1},D)$ is a minimal representation of $\mathfrak{C}$. On the other hand, let $(A,B,C,D)$ and $(\tilde{A},\tilde{B},\tilde{C},\tilde{D})$ be minimal representations of $\mathfrak{C}$. Set $K:=\begin{pmatrix}-I\\ 0\end{pmatrix}$, $L:=\begin{pmatrix} A\\ C\end{pmatrix}$ and $M:=\left[\begin{array}{cc} 0 & B\\ -I & D\end{array}\right]$ and define $\tilde{K}$, $\tilde{L}$ and $\tilde{M}$ analogously. It follows from Theorem 3.4 of \cite{ros} that there exist (unique) invertible matrices $S$ and $T$ such that $(\tilde{K},\tilde{L},\tilde{M})=(TKS^{-1},TLS^{-1},TM)$. Write $T=\left[\begin{array}{cc} T_1 & T_2\\ T_3 & T_4\end{array}\right]$. Thus, the first of the preceding equations, implies $T_1=S$ and $T_3=0$. Inserting this into the second equation, leads to $\tilde{A}=SAS^{-1}+T_2CS^{-1}$ and $\tilde{C}=T_4CS^{-1}$. Finally, the third equation yields $T_2=0$, $T_4=I$ and using this $\tilde{B}=SB$ as well as $\tilde{D}=D$.
\end{proof}

With the help of the preceding theorems, it is possible to transfer the probability results for linear systems to probability results for convolutional codes. 

\begin{theorem}\ \\
The probability that a convolutional code of rate $k/n$ and degree $\delta\geq1$ is non-catastrophic is equal to 
\begin{align}\label{ak}
P^{rc}_{n-k,\delta,k}&=\frac{\operatorname{Pr}((A,B,C)\in\mathbb F^{\delta\times\delta}\times\mathbb F^{\delta\times k}\times\mathbb F^{(n-k)\times\delta}\ \text{reachable and observable})}{\operatorname{Pr}((A,B)\in\mathbb F^{\delta\times\delta}\times\mathbb F^{\delta\times k}\ \text{reachable})}\\
&=1-t^{n-k}+O(t^{n-k+1}).\label{ak2}
\end{align}
\end{theorem}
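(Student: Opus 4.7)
My plan is to prove the two equalities in (\ref{ak})-(\ref{ak2}) separately. The first equality is a bijection/orbit-counting statement, and the second is a direct application of the results from Section \ref{rp}.

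First I would establish the first equality by setting up the correct parametrization of codes by realizations. By Theorem \ref{conj}, the set of minimal representations of a fixed convolutional code $\mathfrak{C}$ of degree $\delta$ forms a single orbit under the action $S\cdot(A,B,C,D)=(SAS^{-1},SB,CS^{-1},D)$ of $Gl_\delta(\mathbb F)$. A short Kalman-style argument shows this action is \emph{free} on reachable tuples: if $S$ stabilizes a reachable $(A,B,C,D)$, then $SA=AS$ and $SB=B$, so $S$ fixes the full-row-rank reachability matrix $[B,AB,\ldots,A^{\delta-1}B]$, forcing $S=I$. Hence each code of rate $k/n$ and degree $\delta$ corresponds to exactly $|Gl_\delta(\mathbb F)|$ reachable quadruples. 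Next, since observability depends only on $(A,C)$ and the observability matrix transforms by right multiplication by $S^{-1}$, observability is constant on orbits; by Theorem \ref{cor}, a code is non-catastrophic iff its (equivalently, any) reachable realization is observable. Therefore
\begin{equation*}
\frac{\#\{\text{non-catastrophic codes of rate }k/n,\text{ degree }\delta\}}{\#\{\text{all codes of rate }k/n,\text{ degree }\delta\}}
=\frac{\#\{(A,B,C,D)\text{ reach.\ and obs.}\}}{\#\{(A,B,C,D)\text{ reach.}\}},
\end{equation*}
and because $D$ is independent of both properties the factor $|\mathbb F^{(n-k)\times k}|$ cancels, yielding the first equality in (\ref{ak}).

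For the second equality I would directly combine earlier results. By Lemma \ref{minl} (with $p=n-k$, $n=\delta$, $m=k$) the numerator equals $P^{rc}_{n-k,\delta,k}(t)\cdot P_{\delta,k}(t)$, while by Theorem \ref{THMA} the denominator is exactly $P_{\delta,k}(t)$. The two factors $P_{\delta,k}(t)$ cancel, leaving $P^{rc}_{n-k,\delta,k}(t)$, and Theorem \ref{1} evaluates this as $1-t^{n-k}+O(t^{n-k+1})$.

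The only nontrivial obstacle is the first paragraph: verifying that the group action is free on reachable realizations and that observability descends to orbits, so that the $|Gl_\delta(\mathbb F)|$ factor cancels cleanly between numerator and denominator. Once that bookkeeping is in place, the asymptotic is just a substitution from the results already proved.
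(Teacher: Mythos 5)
Your proposal is correct and follows essentially the same route as the paper's own (second) argument: orbit-counting via Theorem \ref{conj} to show each degree-$\delta$ code has exactly $|Gl_\delta(\mathbb F)|$ reachable realizations, invariance of observability on orbits combined with Theorem \ref{cor}, and then Lemma \ref{minl} together with Theorems \ref{THMA} and \ref{1} for the asymptotics. The paper additionally sketches an alternative proof identifying non-catastrophicity with right primeness of a generator matrix in $M(n-k,\delta,k)$, but your explicit verification that the $Gl_\delta(\mathbb F)$-action is free on reachable tuples is a welcome detail the paper leaves implicit.
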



 \begin{proof}\ \\
Equations \eqref{ak} and \eqref{ak2} are simply the statements from Lemma \ref{minl} and Theorem \ref{1}. Hence, it remains to show that the probability of non-catastrophicity is equal to one of the expressions from \eqref{ak}. Consequently, there are two possibilities to prove this theorem.\\
The first way is to show that the probability of non-catastrophicity is equal to $P_{n-k,\delta,k}^{rc}$. From the previous subsection, one knows that there exists\\
$(A,B,C,D)\in\mathbb F^{\delta\times \delta}\times\mathbb F^{\delta\times k}\times\mathbb F^{(n-k)\times \delta}\times\mathbb F^{(n-k)\times k}$ such that $\mathfrak{C}=\mathfrak{C}(A,B,C,D)$ and a generator matrix of $\mathfrak{C}$ of the form $G=\begin{pmatrix}
Y\\U\end{pmatrix}$ with $C(zI-A)^{-1}B+D=Y(z)U(z)^{-1}$. Since $G$ is of full column rank and unimodular equivalent generator matrices define the same convolutional code, one could assume that $U$ is in Kronecker-Hermite form. In particular, it is column proper and because $YU^{-1}$ is proper, it follows from Lemma \ref{degn} that $\deg_j(Y)\leq \deg_j(U)$ for $j=1,\hdots k$. Finally, one knows from Theorem \ref{nude} that $\deg(\det(U))=\delta$. Consequently, $G\in M(n-k,\delta,k)$ (see Definition \ref{M}) and since non-catastrophicity of $\mathfrak{C}$ is equivalent to right primeness of $G$, the statement follows.\\
A second way to prove this theorem is to use Theorem \ref{cor}.
According to Theorem \ref{conj}, each convolutional code of degree $\delta$ has exactly $|GL_{\delta}(\mathbb F)|$ minimal representations $(A,B,C,D)$, i.e. exactly $|GL_{\delta}(\mathbb F)|$ representations with $(A,B)$ reachable; see proof of Lemma \ref{minl}. Moreover, if one of these representations is observable, they are all observable and this is the case if and only if the corresponding code $\mathfrak{C}(A,B,C,D)$ is non-catastrophic. Hence, the probability of non-catastrophicity is equal to the right hand side of equation \eqref{ak}.
\end{proof}

\section{Conclusion}

We calculate the probability that a polynomial matrix of a special structure is right prime as well as the probability that
$N$ polynomial matrices in Hermite form are mutually left coprime. Furthermore, we use these results to obtain asymptotic formulas for the probabilities that a linear system is reachable and observable, that a convolutional code is non-catastrophic as well as for the probability that a parallel connected linear system is reachable.
The correspondence between linear systems and convolutional codes was further investigated in \cite{zerz}, where multidimensional systems and codes over finite rings were considered. It remains an open question for future research to study other correlations between polynomial matrices or linear systems and convolutional codes, e.g. in the field of convolutional network coding \cite{ho}.

\end{document}